\numberwithin{equation}{section}
\theoremstyle{plain}
\newtheorem{Thm}{Theorem}[section]
\newtheorem{Prop}[Thm]{Proposition}
\newtheorem{Lem}[Thm]{Lemma}
\theoremstyle{definition}
\newtheorem{Def}[Thm]{Definition}
\newcommand{\defn}[1]{\textbf{\textit{#1}}}
\newcommand{\R}{\mathbb{R}}
\newcommand{\G}{\mathcal{G}}
\newcommand{\g}{\mathfrak{g}}
\newcommand{\h}{\mathfrak{h}}
\newcommand{\too}{\longrightarrow}
\newcommand{\mtoo}{\longmapsto}
\newcommand{\Ad}{\operatorname{Ad}}
\newcommand{\M}{\mathcal{M}}
\newcommand{\A}{\mathcal{A}}
\newcommand{\K}{\mathcal{K}}
\newcommand{\B}{\mathcal{B}}
\newcommand{\im}{\operatorname{im}}
\newcommand{\ad}{\operatorname{ad}}
\newcommand{\sll}[1]{\mkern-4mu\mathbin{/\mkern-5mu/}_{\mkern-4mu{#1}}}
\newcommand{\Lie}{\operatorname{Lie}}
\newcommand{\sint}{_{\mathrm{int}}}
\newcommand{\seg}{%
\tikz[baseline=-0.5ex]{
  \fill (0,0) circle (1.5pt);
  \draw[line width=0.8pt] (0,0) -- (0.35,0);
  \fill (0.35,0) circle (1.5pt);
}}
\newcommand{\tqftcap}{\begin{tikzpicture}[
		baseline=-2.5pt,
		every tqft/.append style={
			transform shape, rotate=90, tqft/circle x radius=4pt,
			tqft/circle y radius= 2pt,
			tqft/boundary separation=0.6cm, 
			tqft/view from=incoming,
		}
		]
		\pic[
		tqft/cap,
		name=d,
		every incoming lower boundary component/.style={draw},
		every outgoing lower boundary component/.style={draw},
		every incoming boundary component/.style={draw},
		every outgoing boundary component/.style={draw},
		cobordism edge/.style={draw},
		cobordism height= 1cm,
		];
	\end{tikzpicture}}
\newcommand{\tqftcup}{\begin{tikzpicture}[
		baseline=-2.5pt,
		every tqft/.append style={
			transform shape, rotate=90, tqft/circle x radius=4pt,
			tqft/circle y radius= 2pt,
			tqft/boundary separation=0.6cm, 
			tqft/view from=incoming,
		}
		]
		\pic[
		tqft/cup,
		name=d,
		every incoming lower boundary component/.style={draw},
		every outgoing lower boundary component/.style={draw},
		every incoming boundary component/.style={draw},
		every outgoing boundary component/.style={draw},
		cobordism edge/.style={draw},
		cobordism height= 1cm,
		];
	\end{tikzpicture}}
\newcommand{\tqftpoptwoone}{%
\begin{tikzpicture}[
  baseline=2.5pt,
  every tqft/.append style={
    transform shape, rotate=90,
    tqft/circle x radius=2pt,
    tqft/circle y radius=1pt,
    tqft/boundary separation=0.3cm,
    tqft/view from=incoming,
  }
]
  \pic[
    tqft/reverse pair of pants, % centers the single outgoing boundary
    name=d,
    every incoming lower boundary component/.style={draw},
    every outgoing lower boundary component/.style={draw},
    every incoming boundary component/.style={draw},
    every outgoing boundary component/.style={draw},
    cobordism edge/.style={draw},
    cobordism height=0.3cm,
  ];
\end{tikzpicture}%
}
\newcommand{\tqftpoponetwo}{%
\begin{tikzpicture}[
  baseline=-2.5pt,
  every tqft/.append style={
    transform shape, rotate=90,
    tqft/circle x radius=2pt,
    tqft/circle y radius=1pt,
    tqft/boundary separation=0.3cm,
    tqft/view from=incoming,
  }
]
  \pic[
    tqft/pair of pants, % centers the single incoming boundary
    name=d,
    every incoming lower boundary component/.style={draw},
    every outgoing lower boundary component/.style={draw},
    every incoming boundary component/.style={draw},
    every outgoing boundary component/.style={draw},
    cobordism edge/.style={draw},
    cobordism height=0.3cm,
  ];
\end{tikzpicture}%
}
\newcommand{\tqftcyl}{%
\begin{tikzpicture}[
  baseline=-2.5pt,
  every tqft/.append style={
    transform shape, rotate=90,
    tqft/circle x radius=3pt,
    tqft/circle y radius=1.5pt,
    tqft/boundary separation=0.3cm,
    tqft/view from=incoming,
  }
]
  \pic[
    tqft/cylinder,
    name=d,
    every incoming lower boundary component/.style={draw},
    every outgoing lower boundary component/.style={draw},
    every incoming boundary component/.style={draw},
    every outgoing boundary component/.style={draw},
    cobordism edge/.style={draw},
    cobordism height=0.5cm,
  ];
\end{tikzpicture}%
}
\newcommand{\tqftswap}{%
\begin{tikzpicture}[
  baseline=0.25pt,
  every tqft/.append style={
    transform shape, rotate=90,
    tqft/circle x radius=2pt,
    tqft/circle y radius=1pt,
    tqft/boundary separation=0.4cm,
    tqft/view from=incoming,
  }
]
  \pic[
    tqft/cylinder to next,
    name=d,
    every incoming lower boundary component/.style={draw},
    every outgoing lower boundary component/.style={draw},
    every incoming boundary component/.style={draw},
    every outgoing boundary component/.style={draw},
    cobordism edge/.style={draw},
    cobordism height=0.4cm,
  ];
  \pic[
    tqft/cylinder to prior,
    every incoming lower boundary component/.style={draw},
    every outgoing lower boundary component/.style={draw},
    every incoming boundary component/.style={draw},
    every outgoing boundary component/.style={draw},
    cobordism edge/.style={draw},
    cobordism height=0.4cm,
    at={($(d-incoming boundary)+(0,0.2)$)} % vertical offset
  ];
\end{tikzpicture}%
}
\title{Lax--Kirchhoff moduli spaces and Hamiltonian 2D TQFT}
\date{\today}
\author{Mohamed Moussadek Maiza}
\author{Maxence Mayrand}
\begin{document}

\begin{abstract}
We introduce the \emph{Lax--Kirchhoff moduli space} associated with a finite quiver $\Gamma$ and a compact connected Lie group $G$. On each oriented edge we consider the Lax equation $\dot{A}_1 + [A_0, A_1] = 0$ and impose a Kirchhoff-type matching condition for the fields $A_1$ at interior vertices. Modulo gauge transformations trivial on the boundary, this yields a moduli space $\M(\Gamma)$. We prove that $\M(\Gamma)$ is a finite-dimensional smooth symplectic manifold carrying a Hamiltonian action of $G^{\partial\Gamma}$ whose moment map records the boundary values of $A_1$. Analytically, we construct slices for the infinite-dimensional gauge action and realize $\M(\Gamma)$ by Marsden--Weinstein reduction. For the quiver consisting of a single edge, we recover the classical identification $\M \cong T^*G$. In general, we identify $\M(\Gamma)$ with a symplectic reduction of $T^*G^E$ by $G^{\Gamma_{\mathrm{int}}}$, where $E$ is the set of edges and $\Gamma_{\mathrm{int}}$ is the set of interior vertices. We further show that $\M(\Gamma)$ is invariant under quiver homotopies, implying that it depends only on the surface with boundary obtained by thickening $\Gamma$. We then assemble these spaces into a two-dimensional topological quantum field theory valued in a category of Hamiltonian spaces.
\end{abstract}

\maketitle

%%%%%%%%%%%%%%%%%%%%%%%%%%%%%%%%%%%%%%%%%%%%%%%%%%%%%%%%%%%%%%%%
\section{Introduction}
%%%%%%%%%%%%%%%%%%%%%%%%%%%%%%%%%%%%%%%%%%%%%%%%%%%%%%%%%%%%%%%%

Let $G$ be a Lie group with Lie algebra $\g$. 
The \emph{Lax equation} is the ordinary differential equation
\[
\dot{A}_1 + [A_0, A_1] = 0
\]
for pairs of elements $A_0, A_1$ in $\g$ depending on a real variable $t$. 
This equation plays a central role in many areas of mathematics, most prominently in the theory of integrable systems, where it generates isospectral flows and hence an abundance of conserved quantities \cite{Lax1968Integrals,Perelomov1990,Fomenko1988,FaddeevTakhtajan1987,HitchinSegalWard2013Integrable}. 
It also appears naturally in gauge theory---such as the $(1+1)$-dimensional Yang--Mills equations on a spacetime cylinder \cite{DriverHall1999YangMills,Hall2002Geometric} or as the complex part of Nahm’s equations \cite{Donaldson1984Nahm,kronheimer1988hyperkahler,Kronheimer1990coadjoint,Kovalev1996Nahm,Biquard1996Nahm}---as well as in Poisson geometry, where it encodes, for instance, the Gauss law on $\g^*$ \cite{CattaneoFelder2001Poisson}.

Geometrically, the Lax equation can be viewed as the condition that a $\g$-valued function $A_1$ be parallel with respect to the connection $A_0dt$ on the trivial principal $G$-bundle over the interval. 
Equivalently, it is the zero-curvature condition for an $S^1$-invariant connection $A_0dt + A_1d\theta$ on the cylinder $[0,1] \times S^1$. 
This formulation naturally gives rise to an action of the group of \emph{gauge transformations}, i.e.\ maps $g : [0,1] \to G$, acting by\footnote{We use matrix notation for convenience; the expression make sense for any Lie group via the adjoint and translation actions.}
\[
g \cdot (A_0, A_1) = (gA_0g^{-1} - \dot{g}g^{-1}, gA_1g^{-1}).
\]
If we restrict to continuously differentiable solutions $A = (A_0, A_1)$ to the Lax equation and quotient by the subgroup $\G_0$ of twice continuously differentiable gauge transformations satisfying $g(0) = g(1) = 1$, we obtain a moduli space
\[
\M([0, 1]) \coloneqq \{A \in C^1([0,1], \g \times \g) : \dot{A}_1 + [A_0, A_1] = 0\} / \G_0
\]
of solutions to the Lax equation on the interval $[0,1]$. 
In particular, when $G$ is a compact connected Lie group, $\M([0,1])$ is a finite-dimensional smooth symplectic manifold isomorphic to $T^*G$ \cite{Hall2002Geometric,kronheimer1988hyperkahler,DancerSwann1996Hyperkahler,Bielawski2007Lie}. 
Moreover, the action of the full gauge group $\G \coloneqq C^2([0,1], G)$ with free boundary values descends to a Hamiltonian action of $\G / \G_0 \cong G \times G$, with moment map
\[
\M([0, 1]) \too \g \times \g, \quad (A_0, A_1) \mtoo (-A_1(0), A_1(1)).
\]
Under the identification $\M([0, 1]) \cong T^*G$, this is precisely the standard Hamiltonian action of $G \times G$ on $T^*G$ induced by left and right multiplication on $G$.

We generalize the Lax moduli space $\M([0, 1])$ by replacing the interval $[0,1]$ with a quiver $\Gamma = (V, E, s, t)$ and imposing a Kirchhoff-type condition for $A_1$ at the interior vertices. 
In more detail, the vertex set of $\Gamma$ naturally decomposes as
\[
V = \partial\Gamma^- \sqcup \partial\Gamma^+ \sqcup \Gamma_\mathrm{int},
\]
where $\partial\Gamma^-$ consists of the degree-$1$ vertices with an incoming edge, $\partial\Gamma^+$ of those with an outgoing edge, and $\Gamma_\mathrm{int}$, the \emph{interior vertices}, those that have degree greater than $1$ (we assume there are no isolated vertices).
\begin{equation}\label{er7w843z}
\begin{tikzcd}[
  sep=small,
  column sep={2cm,between origins},
  row sep={0.5cm,between origins},
  cells={nodes={inner sep=0pt, outer sep=0pt}},
  chevW/.store in=\chevW, chevW=0.10cm,
  chevH/.store in=\chevH, chevH=0.07cm,
  chevShort/.store in=\chevShort, chevShort=0.1ex,
  chevarrow/.style={
    no head,
    line width=0.9pt, line cap=round,
    shorten <=-\chevShort, shorten >=-\chevShort,
    postaction={
      decorate,
      decoration={
        markings,
        mark=at position .5 with {
          \draw[line width=0.9pt] (-\chevW,-\chevH) -- (0,0) -- (-\chevW,\chevH);
        }
      }
    }
  }
]
    & \textcolor{red}{\bullet}  \\
    & \\
    & {\text{\textcolor{red}{$\substack{\text{incoming}\\\text{boundaries}\\\partial\Gamma^-}$}}}
      & \bullet
      & \textcolor{blue}{\bullet} & \\
    &  & \substack{\text{interior}\\\text{vertex}\\\Gamma\sint}
      & {\text{\textcolor{blue}{$\substack{\text{outgoing}\\\text{boundary}\\\partial\Gamma^+}$}}} \\
    & \textcolor{red}{\bullet}
    \arrow[from=1-2, to=3-3, chevarrow, shorten <= -0.5pt]
    \arrow[from=3-3, to=3-4, chevarrow, shorten >= -0.5pt]
    \arrow[from=5-2, to=3-3, chevarrow, shorten <= -0.5pt]
\end{tikzcd}
\end{equation}
Each edge represents a copy of $[0,1]$, and on each edge $e \in E$ we consider $C^1$ solutions $A^e = (A_0^e, A_1^e)$ to the Lax equation, subject to the Kirchhoff law \eqref{onnglawg} for $A_1^e$ at the interior vertices (see \S\ref{9qzubmd9} for details). 
The gauge group $\G_0(\Gamma)$ consists of tuples $g = (g_e)_{e \in E}$ with $g_e \in C^2([0,1], G)$, matching their boundary values at the interior vertices and satisfying trivial boundary conditions on $\partial\Gamma^\pm$. 
Its action on the space $\A(\Gamma)$ of edgewise solutions to the Lax equation satisfying the Kirchhoff law gives rise to the moduli space
\[
\M(\Gamma) \coloneqq \A(\Gamma) / \G_0(\Gamma)
\]
of solutions to the Lax--Kirchhoff equations on $\Gamma$. 
The action of the full gauge group $\G(\Gamma)$, with matching boundary values at interior vertices and free boundary values on $\partial\Gamma^\pm$, descends to an action of
\[
\G(\Gamma)/\G_0(\Gamma) \cong G^{\partial\Gamma}.
\]
We give a rigorous construction of a smooth manifold structure and symplectic form on $\M(\Gamma)$ via Marsden--Weinstein reduction \cite{MarsdenWeinstein}.

\begin{Thm}
Let $\Gamma = (V, E, s, t)$ be a connected quiver with non-empty boundary. 
Then $\M(\Gamma)$ is a finite-dimensional smooth symplectic manifold of dimension
\[
\dim \M(\Gamma) = 2(|E| - |\Gamma\sint|)\dim G
\]
and the action of $G^{\partial\Gamma}$ on $\M(\Gamma)$ is Hamiltonian with moment map
\[
\M(\Gamma) \too \g^{\partial\Gamma}, 
\quad
A \mtoo \bigl(\operatorname{sgn}(v) A_1(v)\bigr)_{v \in \partial\Gamma}.
\]
Moreover, there is an isomorphism of Hamiltonian $G^{\partial\Gamma}$-spaces
\[
\M(\Gamma) \cong T^*G^E \sll{} G^{\Gamma_\mathrm{int}},
\]
where $G^{\Gamma_\mathrm{int}} \subset G^V$ acts on $T^*G^E$ via the embedding $G^V \to (G \times G)^E$, $(g_v)_{v \in V} \mapsto (g_{t(e)}, g_{s(e)})_{e \in E}$.
\end{Thm}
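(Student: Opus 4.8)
The plan is to bootstrap from the single-edge identification $\M([0,1])\cong T^*G$ by viewing an arbitrary quiver as a disjoint union of intervals $\bigsqcup_{e\in E}[0,1]$ with certain endpoints glued at interior vertices, and by matching the two operations hidden in this gluing---imposing the Kirchhoff law for $A_1$, and enlarging the gauge group so that it need only match (rather than be trivial) at interior vertices---with the two stages of a Marsden--Weinstein reduction. So I would first treat the disconnected quiver and then glue.

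For a single edge, parallel transport for the connection $A_0\,dt$ gives a diffeomorphism $\A([0,1])\to G\times\g\cong T^*G$, $(A_0,A_1)\mapsto(h,A_1(0))$ with $h$ the holonomy, under which $\G_0([0,1])$ acts trivially, the residual action of $\G([0,1])/\G_0([0,1])\cong G\times G$ is by left and right translation, and the moment map is $(A_0,A_1)\mapsto(-A_1(0),A_1(1))$; this is the single-edge case assumed above. Writing $\widetilde{\G}_0=\prod_{e}\G_0([0,1])$ for the subgroup of $\G_0(\Gamma)$ trivial at \emph{every} endpoint, one gets a symplectomorphism $\bigl(\prod_e\A([0,1])\bigr)/\widetilde{\G}_0\cong T^*G^E$ intertwining the residual $(G\times G)^E$-action with left/right translation, with moment map recording $(-A_1^e(0),A_1^e(1))_{e\in E}$. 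Now restrict to $\A(\Gamma)\subset\prod_e\A([0,1])$: since $\widetilde{\G}_0$ fixes all endpoint values of $A_1$, it preserves the Kirchhoff condition, so $\A(\Gamma)/\widetilde{\G}_0\subset T^*G^E$ is exactly the locus where Kirchhoff holds. Composing $G^V\hookrightarrow(G\times G)^E$, $(g_v)\mapsto(g_{t(e)},g_{s(e)})_e$, with the $(G\times G)^E$-moment map, the $v$-component of the resulting $G^V$-moment map at an interior vertex is precisely the signed sum of the values $A_1^e$ at $v$; hence the Kirchhoff law is the vanishing of the $G^{\Gamma\sint}$-part of the moment map, and $\A(\Gamma)/\widetilde{\G}_0=\mu_{G^{\Gamma\sint}}^{-1}(0)$. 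Finally, $\G_0(\Gamma)$ consists of those $(g_e)$ whose collected endpoint values assemble to an element of $G^{\Gamma\sint}\subset G^V$ (matching at interior vertices, trivial on $\partial\Gamma$), and $\widetilde{\G}_0$ is the subgroup where that element is $1$, so $\G_0(\Gamma)/\widetilde{\G}_0\cong G^{\Gamma\sint}$; therefore $\M(\Gamma)=\A(\Gamma)/\G_0(\Gamma)=\mu_{G^{\Gamma\sint}}^{-1}(0)/G^{\Gamma\sint}=T^*G^E\sll{}G^{\Gamma\sint}$.

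To see the right-hand side is a smooth symplectic manifold of the stated dimension, I would check that $G^{\Gamma\sint}$ acts freely on the base $G^E$: if $(g_v)$ fixes $(h_e)_{e\in E}$ then $g_{t(e)}=h_eg_{s(e)}h_e^{-1}$ for all $e$, and since $\Gamma$ is connected with non-empty boundary, propagating from a boundary vertex, where $g=1$, along a path to any interior vertex forces all $g_v=1$. Hence the cotangent-lifted action of the compact group $G^{\Gamma\sint}$ on $T^*G^E$ is free and proper, its moment map is a submersion (the vertical derivative alone is already onto $(\g^{\Gamma\sint})^*$), $0$ is a regular value, and Marsden--Weinstein reduction yields a smooth symplectic manifold of dimension $\dim T^*G^E-2\dim G^{\Gamma\sint}=2(|E|-|\Gamma\sint|)\dim G$. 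Since the interior and boundary vertex sets are disjoint, $G^V=G^{\partial\Gamma}\times G^{\Gamma\sint}$, so the complementary factor $G^{\partial\Gamma}$ descends to a Hamiltonian action on the reduced space with moment map the $\partial\Gamma$-component of the $G^V$-moment map; unwinding the single-edge moment map on each boundary edge identifies this with $A\mapsto(\operatorname{sgn}(v)A_1(v))_{v\in\partial\Gamma}$, giving the asserted isomorphism of Hamiltonian $G^{\partial\Gamma}$-spaces.

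The main obstacle is the analytic content underlying these reductions: one must genuinely construct slices for the infinite-dimensional $\G_0(\Gamma)$-action---equivalently, verify that $\A([0,1])\to T^*G$ is a locally trivial principal $\G_0([0,1])$-bundle and that the residual finite-dimensional data glue compatibly over interior vertices---so that $\M(\Gamma)$ is a smooth finite-dimensional manifold, and, crucially, one must check that the symplectic form it inherits from the ambient weak symplectic structure on $\A(\Gamma)$ coincides with the finite-dimensional Marsden--Weinstein form on $T^*G^E\sll{}G^{\Gamma\sint}$. The single-edge symplectomorphism $\A([0,1])/\G_0([0,1])\cong T^*G$ handles each edge; the real work is to show that imposing the Kirchhoff constraint and quotienting by the interior gauge group is compatible with this ambient form, i.e.\ that the infinite-dimensional reduction commutes with the edgewise identification, so that the two descriptions of $\M(\Gamma)$ agree as symplectic manifolds.
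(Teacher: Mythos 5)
Your architecture is sound, and much of it coincides with what the paper actually does: Section 5 of the paper identifies $\M(\Gamma)$ with $T^*G^E\sll{}G^{\Gamma\sint}$ via exactly your edgewise parallel-transport map $A\mapsto(g_A(1),A_1(0))$, and your observations that the Kirchhoff law is the vanishing of the $G^{\Gamma\sint}$-component of the cotangent moment map, that $\G_0(\Gamma)/\widetilde{\G}_0\cong G^{\Gamma\sint}$, and that freeness of $G^{\Gamma\sint}$ on $\nu^{-1}(0)$ follows by propagating from a boundary vertex, all appear in the paper. The genuinely different feature of your route is the reduction in stages: you push the entire infinite-dimensional difficulty into the single-edge statement and then work finite-dimensionally, whereas the paper reduces by the full gauge group $\G_0(\Gamma)$ in one step and only afterwards constructs the finite-dimensional model.

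The genuine gap is the one you flag yourself as "the main obstacle": the infinite-dimensional analysis is never carried out, and it constitutes the bulk of the paper's proof. Concretely, three things are missing. First, local slices for the $\G_0(\Gamma)$-action (equivalently, the principal-bundle structure of $\A\to T^*G$ on each edge, glued compatibly at interior vertices): the paper builds these by proving that $L_A=D_A^*D_A$ is an isomorphism of Banach spaces, writing $L_A=L_0+K$ with $K$ compact and invoking the Fredholm alternative, and proves properness via the homeomorphism $g\mapsto-\dot gg^{-1}$. Second, the Marsden--Weinstein hypothesis that $T_A(\G_0(\Gamma)\cdot A)$ is the \emph{full} $\omega$-orthogonal complement of $T_A\A(\Gamma)$ --- automatic in finite dimensions but not in Banach spaces --- which the paper establishes with a separate lemma using a spanning-tree construction. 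Third, the compatibility of the reduced weak symplectic form with the canonical form on $T^*G^E\sll{}G^{\Gamma\sint}$, which the paper verifies by computing $d\psi_{(a,x)}(u,v)=(\Ad_{\gamma_a}u,\Ad_{\gamma_a}(t[u,x]+v))$ and integrating; in your version this step would additionally require justifying reduction in stages with an infinite-dimensional first stage. Until these are supplied, neither the smooth structure on $\M(\Gamma)$ nor the descent of the symplectic form is actually established; what you have is a correct and efficient organization of the finite-dimensional part of the argument resting on an unproven analytic core.
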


The key ingredient in the proof is an explicit construction of local slices for the action of $\G_0(\Gamma)$ on the Banach manifold $\A(\Gamma)$.

Next, we study how the moduli spaces $\M(\Gamma)$ behave under elementary operations on quivers. 
In particular, given two quivers $\Gamma_1$ and $\Gamma_2$ with a common boundary set $\partial \Gamma_1^+ = \partial \Gamma_2^- \eqqcolon B$, we form the quiver $\Gamma_1 \star \Gamma_2$ by gluing $\Gamma_1$ and $\Gamma_2$ along $B$. 
\begin{equation}\label{7w8c1eku}
\begin{tikzpicture}[
  baseline=(current bounding box.center),
  x=0.8cm, y=-0.8cm, % integer grid; row index increases downward
  vertex/.style={circle, fill=black, draw=none, inner sep=1.2pt, outer sep=0pt},
  edge/.style={line width=0.9pt, line cap=round},
  chevW/.store in=\chevW, chevW=0.08, % tune chevron size (in cm now)
  chevH/.store in=\chevH, chevH=0.08,
  markchev/.style={
    postaction={
      decorate,
      decoration={
        markings,
        mark=at position #1 with {
          \draw[edge] (-\chevW,-\chevH) -- (0,0) -- (-\chevW,\chevH);
        }
      }
    }
  },
  markchev/.default=0.55
]
  % Integer-grid coordinates: (col, row) = (j-1, i-1)
  \coordinate (V11) at (0,0); % (1,1) top-left
  \coordinate (V31) at (0,2); % (3,1)
  \coordinate (V22) at (1,1); % (2,2)
  \coordinate (V23) at (2,1); % (2,3)
  \coordinate (V14) at (3,0); % (1,4) top-right
  \coordinate (V34) at (3,2); % (3,4) bottom-right

  % edges
  \draw[edge, markchev] (V11) -- (V22); % (1,1) -> (2,2)
  \draw[edge, markchev] (V31) -- (V22); % (3,1) -> (2,2)
  \draw[edge, markchev] (V22) -- (V23); % (2,2) -> (2,3)
  \draw[edge, markchev] (V23) -- (V14); % (2,3) -> (1,4)
  \draw[edge, markchev] (V23) -- (V34); % (2,3) -> (3,4)

  % vertices
  \foreach \v in {V11,V31,V22,V23,V14,V34} {
    \node[vertex] at (\v) {};
  }
\end{tikzpicture}
\qquad\star\qquad
\begin{tikzpicture}[
  baseline=(current bounding box.center),
  x=0.8cm, y=-0.8cm, % integer grid; row index increases downward
  vertex/.style={circle, fill=black, draw=none, inner sep=1.2pt, outer sep=0pt},
  edge/.style={line width=0.9pt, line cap=round},
  chevW/.store in=\chevW, chevW=0.1, % chevron size (in cm)
  chevH/.store in=\chevH, chevH=0.1,
  markchev/.style={
    postaction={
      decorate,
      decoration={
        markings,
        mark=at position #1 with {
          \draw[edge] (-\chevW,-\chevH) -- (0,0) -- (-\chevW,\chevH);
        }
      }
    }
  },
  markchev/.default=0.55
]
  % vertices at (col, row)
  \coordinate (V00) at (0,0); % (0,0)
  \coordinate (V20) at (0,2); % (2,0)
  \coordinate (V11) at (1,1); % (1,1)
  \coordinate (V12) at (2,1); % (1,2)

  % edges
  \draw[edge, markchev] (V00) -- (V11); % (0,0) -> (1,1)
  \draw[edge, markchev] (V20) -- (V11); % (2,0) -> (1,1)
  \draw[edge, markchev] (V11) -- (V12); % (1,1) -> (1,2)

  % draw vertices
  \foreach \v in {V00,V20,V11,V12} {
    \node[vertex] at (\v) {};
  }
\end{tikzpicture}
\qquad=\qquad
\begin{tikzpicture}[
  baseline=(current bounding box.center),
  x=0.8cm, y=-0.8cm, % integer grid; row index increases downward
  vertex/.style={circle, fill=black, draw=none, inner sep=1.2pt, outer sep=0pt},
  edge/.style={line width=0.9pt, line cap=round},
  chevW/.store in=\chevW, chevW=0.08, % chevron size (in cm)
  chevH/.store in=\chevH, chevH=0.08,
  markchev/.style={
    postaction={
      decorate,
      decoration={
        markings,
        mark=at position #1 with {
          \draw[edge] (-\chevW,-\chevH) -- (0,0) -- (-\chevW,\chevH);
        }
      }
    }
  },
  markchev/.default=0.55
]
  % Place (i,j) at (j-1, i-1)
  \coordinate (V11) at (0,0); % (1,1)
  \coordinate (V31) at (0,2); % (3,1)
  \coordinate (V22) at (1,1); % (2,2)
  \coordinate (V23) at (2,1); % (2,3)
  \coordinate (V14) at (3,0); % (1,4)
  \coordinate (V34) at (3,2); % (3,4)
  \coordinate (V25) at (4,1); % (2,5)
  \coordinate (V26) at (5,1); % (2,6)

  % edges
  \draw[edge, markchev] (V11) -- (V22); % (1,1) -> (2,2)
  \draw[edge, markchev] (V31) -- (V22); % (3,1) -> (2,2)
  \draw[edge, markchev] (V22) -- (V23); % (2,2) -> (2,3)
  \draw[edge, markchev] (V23) -- (V14); % (2,3) -> (1,4)
  \draw[edge, markchev] (V23) -- (V34); % (2,3) -> (3,4)
  \draw[edge, markchev] (V14) -- (V25); % (1,4) -> (2,5)
  \draw[edge, markchev] (V34) -- (V25); % (3,4) -> (2,5)
  \draw[edge, markchev] (V25) -- (V26); % (2,5) -> (2,6)

  % vertices
  \foreach \v in {V11,V31,V22,V23,V14,V34,V25,V26} {
    \node[vertex] at (\v) {};
  }
\end{tikzpicture}
\end{equation}
This operation is reflected on moduli by Hamiltonian reduction:

\begin{Thm}
If $\partial \Gamma_1^+ = \partial \Gamma_2^- \eqqcolon B$, then there is a canonical isomorphism
\[
\M(\Gamma_1 \star \Gamma_2) \cong \bigl(\M(\Gamma_1) \times \M(\Gamma_2)\bigr) \sll{} G^B,
\]
as Hamiltonian $G^{\partial\Gamma_1^-} \times G^{\partial\Gamma_2^+}$-spaces.
\end{Thm}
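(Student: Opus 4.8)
The plan is to derive the formula from the identification $\M(\Gamma)\cong T^*G^E\sll{}G^{\Gamma_\mathrm{int}}$ of the first theorem, together with reduction in stages. Write $\Gamma_i=(V_i,E_i,s_i,t_i)$ and $\Gamma=\Gamma_1\star\Gamma_2$. The first task is the elementary but orientation-sensitive combinatorics of the join. One has $E=E_1\sqcup E_2$, while the vertex set of $\Gamma$ is $V_1\sqcup V_2$ with $\partial\Gamma_1^+$ and $\partial\Gamma_2^-$ identified along $B$; each $v\in B$ then has degree $2$ in $\Gamma$ and so becomes an interior vertex, giving
\[
\Gamma_\mathrm{int}=(\Gamma_1)_\mathrm{int}\sqcup(\Gamma_2)_\mathrm{int}\sqcup B,
\qquad
\partial\Gamma=\partial\Gamma_1^-\sqcup\partial\Gamma_2^+ .
\]
Accordingly $T^*G^E=T^*G^{E_1}\times T^*G^{E_2}$ and $G^{\Gamma_\mathrm{int}}=G^{(\Gamma_1)_\mathrm{int}}\times G^{(\Gamma_2)_\mathrm{int}}\times G^B$, and---chasing the embedding $G^V\to(G\times G)^E$---the factor $G^{(\Gamma_i)_\mathrm{int}}$ acts only on $T^*G^{E_i}$, whereas $G^B$ acts on $T^*G^{E_1}$ through $B=\partial\Gamma_1^+\hookrightarrow V_1$ and on $T^*G^{E_2}$ through $B=\partial\Gamma_2^-\hookrightarrow V_2$. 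I would record this as a lemma, since it is where the orientations of the edges at the vertices of $B$, hence the signs $\operatorname{sgn}(v)$ in the moment maps, are pinned down.

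Granting this, put $K=G^{(\Gamma_1)_\mathrm{int}}\times G^{(\Gamma_2)_\mathrm{int}}$ and $H=G^B$; these are commuting subgroups of $G^{\Gamma_\mathrm{int}}$. Since $K$ acts factorwise, the reduction of the product by the product is the product of the reductions:
\[
\mu_K^{-1}(0)/K=\bigl(T^*G^{E_1}\sll{}G^{(\Gamma_1)_\mathrm{int}}\bigr)\times\bigl(T^*G^{E_2}\sll{}G^{(\Gamma_2)_\mathrm{int}}\bigr)\cong\M(\Gamma_1)\times\M(\Gamma_2),
\]
by the first theorem applied to $\Gamma_1$ and $\Gamma_2$, matching symplectic forms and residual actions along the way. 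As $H$ commutes with $K$ and acts symplectically, its moment map is $K$-invariant and the $H$-action descends to $\M(\Gamma_1)\times\M(\Gamma_2)$; by the first theorem again, the descended action is the diagonal one induced by the Hamiltonian $G^{\partial\Gamma_1}$-action on $\M(\Gamma_1)$ restricted to $G^{\partial\Gamma_1^+}=G^B$ and the Hamiltonian $G^{\partial\Gamma_2}$-action on $\M(\Gamma_2)$ restricted to $G^{\partial\Gamma_2^-}=G^B$. Reduction in stages then yields
\[
\M(\Gamma)\cong T^*G^E\sll{}G^{\Gamma_\mathrm{int}}=\mu^{-1}(0)/(K\times H)=\overline{\mu}_H^{-1}(0)/H\cong\bigl(\M(\Gamma_1)\times\M(\Gamma_2)\bigr)\sll{}G^B,
\]
and the hypotheses of the first theorem---applied to $\Gamma$, $\Gamma_1$, $\Gamma_2$---ensure all level sets are regular and all actions free, so every space in this chain is a smooth symplectic manifold.

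It remains to verify the bookkeeping of levels, residual symmetry and moment maps. For $v\in B$ the component of $\overline{\mu}_H$ along $v$ is $\operatorname{sgn}_{\Gamma_1}(v)\,A_1^{(1)}(v)+\operatorname{sgn}_{\Gamma_2}(v)\,A_1^{(2)}(v)=A_1^{(1)}(v)-A_1^{(2)}(v)$, since $v$ is an outgoing boundary vertex of $\Gamma_1$ and an incoming boundary vertex of $\Gamma_2$; hence $\overline{\mu}_H=0$ is exactly the Kirchhoff matching at the degree-$2$ vertices created by the gluing, confirming that one reduces at level $0$. The residual symmetry group on both sides is $G^{\partial\Gamma_1^-}\times G^{\partial\Gamma_2^+}$, and the moment map induced on $\bigl(\M(\Gamma_1)\times\M(\Gamma_2)\bigr)\sll{}G^B$ records $\operatorname{sgn}(v)\,A_1(v)$ over $\partial\Gamma_1^-\sqcup\partial\Gamma_2^+=\partial\Gamma$, which is the moment map of $\M(\Gamma)$ in the first theorem. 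Canonicity is automatic, as the isomorphism is assembled entirely from the canonical identifications of the first theorem. (Alternatively, one can argue directly on gauge data: $\A(\Gamma)$ is the fibre product of $\A(\Gamma_1)$ and $\A(\Gamma_2)$ over the $A_1$-values at $B$, and $\G_0(\Gamma)$ is an extension of $G^B$ by $\G_0(\Gamma_1)\times\G_0(\Gamma_2)$, so quotienting in two stages reproduces the same answer; the cotangent route is shorter given the first theorem.)

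The main obstacle is the first step: establishing the precise dictionary between \emph{gluing the quivers and then forgetting the interior data} and \emph{forming the two Hamiltonian spaces separately and then reducing by the diagonal $G^B$}, at the level of the cotangent models and their moment maps, signs included. After that, the argument is a formal application of reduction in stages, together with the observation---standard, but worth stating---that the symplectic reduction of a product Hamiltonian space by a product of groups acting factorwise is the product of the reductions. One further point to handle separately is the degenerate case $\partial\Gamma_1^-=\partial\Gamma_2^+=\emptyset$, where $\Gamma$ has empty boundary and one must invoke the appropriate closed-quiver version of the first theorem; the reduction-theoretic step itself is unaffected.
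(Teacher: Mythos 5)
Your proposal is correct and follows exactly the paper's route: the paper's entire proof of this theorem is the one-line remark that it ``follows directly from Theorem \ref{tis5zvw5} together with reduction in stages,'' and your write-up simply supplies the combinatorial and moment-map bookkeeping that this one-liner leaves implicit. The only caveat is the degenerate case $\partial\Gamma_1^-=\partial\Gamma_2^+=\emptyset$ you flag at the end, which the paper sidesteps (closed surfaces are only treated later, as formal sequences in the Wehrheim--Woodward completion of $\mathbf{Ham}$), so it is reasonable to read the theorem as tacitly assuming $\Gamma_1\star\Gamma_2$ has non-empty boundary.
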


To a quiver $\Gamma$ we associate an oriented $2$-dimensional cobordism $\Sigma_\Gamma$ (a surface with boundary) by ``thickening'' it as in the following picture:
\begin{equation}\label{fyqqebnn}
\begin{tikzpicture}[
  baseline=(current bounding box.center),
  x=0.004cm, y=-0.004cm,
  vertex/.style={circle, fill=black, draw=none, inner sep=1.2pt, outer sep=0pt},
  edge/.style={line width=0.9pt, line cap=round},
  chevW/.store in=\chevW, chevW=16,
  chevH/.store in=\chevH, chevH=16,
  markchev/.style={
    postaction={
      decorate,
      decoration={
        markings,
        mark=at position #1 with {
          \draw[edge] (-\chevW,-\chevH) -- (0,0) -- (-\chevW,\chevH);
        }
      }
    }
  },
  markchev/.default=0.55
]
  \coordinate (A) at (409,170);
  \coordinate (B) at (1268,169);
  \coordinate (C) at (134,441);
  \coordinate (D) at (244,898);
  \coordinate (E) at (1661,536);
  \coordinate (F) at (1485,918);
  \coordinate (G) at (345,633);
  \coordinate (H) at (643,688);
  \coordinate (I) at (778,430);
  \coordinate (J) at (535,408);
  \coordinate (K) at (1134,428);
  \coordinate (L) at (1123,667);
  \coordinate (M) at (1390,629);
  \coordinate (N) at (1271,516);

  \draw[edge, markchev] (C) -- (G);
  \draw[edge, markchev] (D) -- (G);
  \draw[edge, markchev] (H) -- (G);
  \draw[edge, markchev] (G) -- (J);
  \draw[edge, markchev] (J) -- (A);
  \draw[edge, markchev] (J) -- (I);
  \draw[edge, markchev] (I) -- (H);
  \draw[edge, markchev] (I) -- (K);
  \draw[edge, markchev] (K) -- (B);
  \draw[edge, markchev] (K) -- (N);
  \draw[edge, markchev] (N) -- (M);
  \draw[edge, markchev] (K) -- (L);
  \draw[edge, markchev] (L) -- (M);
  \draw[edge, markchev] (M) -- (E);
  \draw[edge, markchev] (F) -- (M);

  \foreach \v in {A,B,C,D,E,F,G,H,I,J,K,L,M,N} {
    \node[vertex] at (\v) {};
  }
\end{tikzpicture}
\quad\mtoo\quad
\raisebox{-0.51\height}{\includegraphics[width=2.9in]{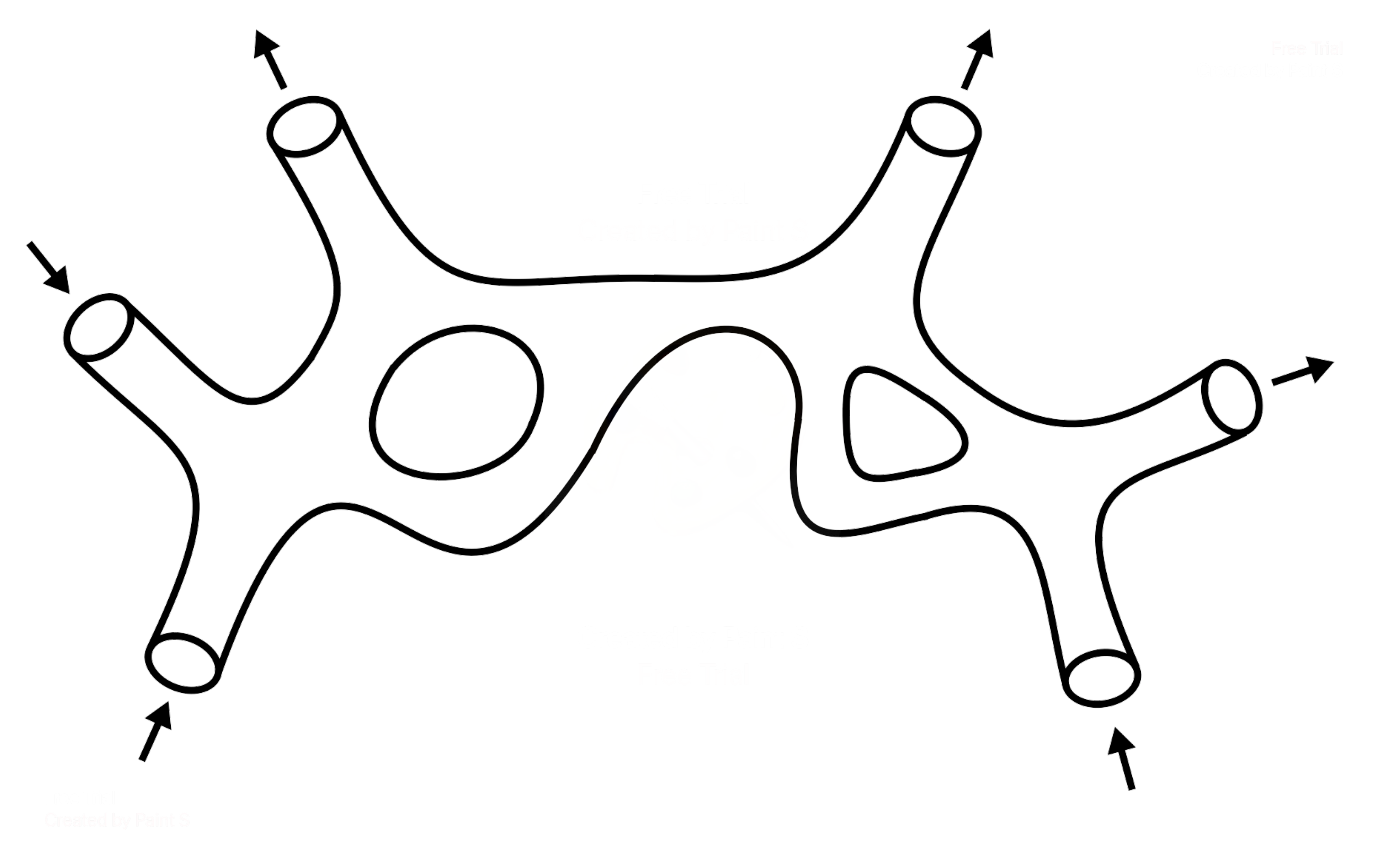}}
\end{equation}
\[
\kern-1.5cm
\begin{tikzcd}
\Gamma  & \mtoo & \Sigma_\Gamma
\end{tikzcd}
\]
Different quivers may induce isomorphic cobordisms; the precise combinatorial relationship between quivers with isomorphic associated cobordisms is given by \emph{quiver homotopy}; see Definition \ref{nyfvbnh4} and Proposition \ref{pahn7k6y}.

\begin{Thm}
If $\Gamma_1$ and $\Gamma_2$ are homotopic, then $\M(\Gamma_1) \cong \M(\Gamma_2)$ as Hamiltonian spaces. 
In particular, the isomorphism class of $\M(\Gamma)$ depends only on the underlying cobordism $\Sigma_\Gamma$.
\end{Thm}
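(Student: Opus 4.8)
The plan is to reduce the statement to a short list of local moves. By Proposition~\ref{pahn7k6y}, two quivers are homotopic exactly when they are related by a finite sequence of elementary moves, each of which alters $\Gamma$ only inside a bounded subquiver and leaves $\partial\Gamma$ together with the complementary subquiver fixed; and $\Sigma_{\Gamma_1}\cong\Sigma_{\Gamma_2}$ if and only if $\Gamma_1$ and $\Gamma_2$ are homotopic, so the last clause will follow from the first. It therefore suffices to show that $\M$, with its $G^{\partial\Gamma}$-action and moment map, is unchanged under a single move. For this I would use a \emph{localization} step: if a move replaces a subquiver $P\subset\Gamma$ by $P'$ with the same boundary $B$ (the vertices along which the modified region meets the rest $Q$), then the gluing theorem — or, equivalently, the description $\M(\Gamma)\cong T^*G^E\sll{}G^{\Gamma\sint}$ together with reduction in stages — gives $\M(\Gamma)\cong(\M(Q)\times\M(P))\sll{}G^B$ and $\M(\Gamma')\cong(\M(Q)\times\M(P'))\sll{}G^B$ as Hamiltonian $G^{\partial\Gamma}$-spaces (note $\partial\Gamma=\partial\Gamma'$). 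So it is enough to produce, for each elementary move, an isomorphism $\M(P)\cong\M(P')$ of Hamiltonian $G^B$-spaces; it then descends through the Marsden--Weinstein quotient by $G^B$.

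The governing example is edge subdivision: $P$ is a single edge $u\to v$ (so $B=\{u,v\}$ with $u\in\partial P^-$, $v\in\partial P^+$) and $P'$ is a path $u\to w\to v$ through a new bivalent interior vertex $w$. Here $\M(P)\cong T^*G$ with its bi-invariant Hamiltonian $G\times G$-structure (the single-edge case recalled in the introduction), whereas the gluing theorem gives $\M(P')\cong(T^*G\times T^*G)\sll{}G$ with $G=G_w$ acting by right translation on the first factor and left translation on the second; the standard reduction $(T^*G\times T^*G)\sll{}G\cong T^*G$ then identifies $\M(P')$ with $\M(P)$, compatibly with the residual $G_u\times G_v=G^B$-actions and moment maps. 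One can also argue directly at the level of solution spaces: concatenating the two edges of $P'$ across $w$, where the Kirchhoff law forces $A_1$ to match and the gauge freedom acts transitively, identifies $\A(P')$ with $\A(P)$ equivariantly for $\G_0$ and for $G^B$. Reversing this handles the inverse move (deletion of a bivalent interior vertex); the underlying fact is the unit property $(N\times T^*G)\sll{}G\cong N$ of the bi-invariant $T^*G$.

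I expect each remaining generator in Proposition~\ref{pahn7k6y} to reduce similarly to a small computation with one or two copies of $T^*G$. Reversal of an edge between two interior vertices corresponds to the inversion symmetry $(g,\xi)\mapsto(g^{-1},-\Ad^*_g\xi)$ of $T^*G$, which exchanges its two commuting translation actions and hence intertwines the two Hamiltonian $G^B$-structures on $\M(P)$ and $\M(P')$. Contraction of such an edge corresponds to the same unit property applied with the other-side action leaking through the gauge fixing: reducing the $T^*G$-factor of the contracted edge by one of its two $G$-actions identifies the two vertex groups at its endpoints and changes nothing else. In every case the content is to check $G^B$-equivariance and agreement of the moment maps.

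The step I expect to be hardest is exactly that last verification — confirming that each local identification of copies of $T^*G$ is an isomorphism of Hamiltonian $G^B$-spaces (full equivariance plus moment-map matching), since only then does it genuinely pass to the Marsden--Weinstein quotient by $G^B$ and glue with the untouched part $Q$. A secondary technical point: when the cut $B$ meets the modified region in boundary circles of both orientations, one needs a mild extension of the gluing theorem to gluing along a common sub-boundary rather than along an entire incoming or outgoing side; this should be routine given the slice construction behind the gluing theorem, but it must be stated explicitly. With these in place, composing the local isomorphisms along a homotopy $\Gamma_1\simeq\Gamma_2$ yields $\M(\Gamma_1)\cong\M(\Gamma_2)$ as Hamiltonian $G^{\partial\Gamma}$-spaces, and the dependence of the isomorphism class only on $\Sigma_\Gamma$ follows.
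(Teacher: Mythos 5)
Your proposal is built on the same two ingredients as the paper's proof: the identification $\M(\Gamma)\cong T^*G^E\sll{}G^{\Gamma\sint}$ (Theorem \ref{tis5zvw5}) and the unit property $(M\times T^*G)\sll{}G\cong M$ of the bi-invariant $T^*G$, applied via reduction in stages. Where you differ is in the packaging. The paper does \emph{not} localize: for a single move \eqref{aq07x7n5} it writes $T^*G^E\sll{}G^{\Gamma\sint}\cong(T^*G^{E\setminus\{e_0\}}\times T^*G)\sll{}(G^{\Gamma\sint\setminus\{v_1,v_2\}}\times G^{v_1}\times G^{v_2})$, reduces the $T^*G^{e_0}$-factor by $G^{v_2}$ using the unit property, and is done in four lines. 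Your extra layer --- cutting out a subquiver $P$ with cut locus $B$ and invoking the gluing theorem to get $\M(\Gamma)\cong(\M(Q)\times\M(P))\sll{}G^B$ --- is not actually licensed by Theorem \ref{m8uy90np} as stated: the $\star$-operation glues along degree-one boundary vertices with matched orientations, whereas the vertices at which the move \eqref{aq07x7n5} acts are interior vertices of arbitrary degree, so excising the modified region does not produce a $\star$-decomposition (and subdividing edges first to create such a decomposition would itself presuppose homotopy invariance). You partially anticipate this by offering the ``equivalently, reduction in stages on $T^*G^E\sll{}G^{\Gamma\sint}$'' route, which is the correct one and is exactly the paper's argument; I would drop the gluing-theorem phrasing entirely. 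Two smaller points: the paper's homotopy relation has a single generator (and its inverse), so your separate treatments of subdivision, reversal, and contraction are redundant --- reversal in particular is \emph{derived} from the basic move in the paper, not verified via the inversion symmetry of $T^*G$; and the equivariance/moment-map matching you flag as the hardest step is automatic in the paper's formulation because the boundary factors $G^{\partial\Gamma}$ are untouched spectators throughout the reduction in stages.
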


Finally, we use these results to construct a $2$-dimensional topological quantum field theory (TQFT) valued in a category of Hamiltonian spaces, in the spirit of the Moore--Tachikawa TQFT for holomorphic symplectic manifolds \cite{MooreTachikawa}, but in the real setting (see also \cite{Cazassus2023twocategory,CrooksMayrand2024MooreTachikawa}). 
Namely, define a category $\mathbf{Ham}$ whose objects are Lie groups and where a morphism $G \to H$ is a Hamiltonian $G \times H$-space. 
Composition of morphisms $M : G \to H$ and $N : H \to I$ is given by Hamiltonian reduction of $M \times N$ by $H$ (this is only a partial category but it can be completed into a category as we explain in \S\ref{j8sih92i}).
Then assigning $S^1 \mapsto G$ and $\Sigma_\Gamma \mapsto \M(\Gamma)$ yields a symmetric monoidal functor
\[
\M : \mathbf{Cob}_2 \too \mathbf{Ham},
\]
i.e.\ a TQFT with values in $\mathbf{Ham}$.

\subsection*{Organization}
Section \ref{9qzubmd9} sets the precise definition of the Lax--Kirchhoff moduli space $\M(\Gamma)$. We then prove in Section \ref{zscudduj} that $\M(\Gamma)$ is a finite-dimensional smooth manifold by constructing local slices for the action. Section \ref{vpbytyos} then uses the infinite-dimensional version of Marsden--Weinstein reduction to endow $\M(\Gamma)$ with a symplectic structure. It is then shown in Section \ref{rj2bih75} that $\M(\Gamma)$ is a Hamiltonian $G^{\partial\Gamma}$-space via an explicit identification with a symplectic reduction of $T^*G^E$ by $G^{\Gamma\sint}$. We then prove the homotopy invariance of $\M(\Gamma)$ in Section \ref{x63cjnor} and use it in Section \ref{j8sih92i} to construct a TQFT valued in $\mathbf{Ham}$.

\subsection*{Acknowledgments} We thank Anton Alekseev and Arturo Zenen Oliva Gonzalez for useful discussions. The second author acknowledges the support of a Discovery Grant (RGPIN-2023-04587) from the Natural Sciences and Engineering Research Council of Canada (NSERC). The first author thanks the Institut des sciences mathématiques ISM and the Ernest-Monga scholarship for their financial support.

%%%%%%%%%%%%%%%%%%%%%%%%%%%%%%%%%%%%%%%%%%%%%%%%%%%%%%%%%%%%%%%%
\section{The Lax--Kirchhoff moduli space}\label{9qzubmd9}
%%%%%%%%%%%%%%%%%%%%%%%%%%%%%%%%%%%%%%%%%%%%%%%%%%%%%%%%%%%%%%%%

The goal of this section is to give a precise definition of the Lax--Kirchhoff moduli space $\M(\Gamma)$. 
Throughout this section and the rest of the paper, $G$ denotes a compact connected Lie group with Lie algebra $\g$ endowed with an $\Ad$-invariant inner product. We also work in a representation of $G$ as a matrix Lie group so that, for example, $-\dot{g}g^{-1}$ is well-defined for $g \in C^2([0, 1], G)$. Although this last step is not necessary, it simplifies the notation and the proofs substantially.

\begin{Def}
A \defn{quiver} is a tuple $\Gamma = (V, E, s, t)$ where:
\begin{itemize}
    \item $V$ is a finite set of \defn{vertices};
    \item $E$ is a finite set of \defn{edges};
    \item $s, t : E \to V$ are maps called the \defn{source} and \defn{target} maps, respectively.
\end{itemize}
We assume that $\Gamma$ has no isolated vertices, i.e.\ $\operatorname{im}(s) \cup \operatorname{im}(t) = V$.  
The \defn{degree} of a vertex $v \in V$ is the sum
\[
\deg(v) \coloneqq \deg_{\mathrm{in}}(v) + \deg_{\mathrm{out}}(v),
\]
where $\deg_{\mathrm{in}}(v) = |t^{-1}(v)|$ is the number of incoming edges and $\deg_{\mathrm{out}}(v) = |s^{-1}(v)|$ the number of outgoing ones.
The \defn{boundary} of $\Gamma$, denoted $\partial\Gamma$, is the set of vertices of degree $1$. 
It decomposes as a disjoint union
\[
\partial\Gamma = \partial\Gamma^- \sqcup \partial\Gamma^+,
\]
where
\[
\partial\Gamma^- \coloneqq \{v \in \partial\Gamma : \deg_{\mathrm{out}}(v) = 1\}
\quad\text{and}\quad
\partial\Gamma^+ \coloneqq \{v \in \partial\Gamma : \deg_{\mathrm{in}}(v) = 1\}
\]
are the \defn{incoming} and \defn{outgoing boundaries}, respectively.  
The complement
\[
\Gamma\sint \coloneqq V \setminus \partial\Gamma
\]
is the set of \defn{interior vertices}. See Figure \eqref{er7w843z}.
\end{Def}

Let $\Gamma = (V, E, s, t)$ be a quiver.  
To each edge $e \in E$ we associate a copy $I_e$ of the interval $[0,1]$, and let 
\[
I \coloneqq \bigsqcup_{e \in E} I_e.
\]
We consider the Banach space
\[
\B(\Gamma) \coloneqq C^1(I, \g \times \g) = \bigl(C^1([0,1], \g \times \g)\bigr)^E,
\]
whose elements are tuples $A = (A^e)_{e \in E}$, where each $A^e = (A_0^e, A_1^e)$ satisfies $A_i^e : I_e \to \g$ for $i = 0,1$.  
We also write $A_i \coloneqq (A_i^e)_{e \in E}$ for the components of $A$.

\medskip

Inside $\B(\Gamma)$, we define the closed subspace called the \defn{Kirchhoff space}
\begin{equation}\label{onnglawg}
\K(\Gamma) \coloneqq 
\left\{
A \in \B(\Gamma) : 
\sum_{e \in t^{-1}(v)} A_1^e(1) = \sum_{e \in s^{-1}(v)} A_1^e(0)
\text{ for all } v \in \Gamma\sint
\right\}.
\end{equation}
This expresses the Kirchhoff law for the $\g$-valued functions $A_1^e$ at each interior vertex.  

Next, define the \defn{Lax space}
\[
\mathcal{L}(\Gamma) \coloneqq 
\{ A \in \B(\Gamma) : \dot{A}_1 + [A_0, A_1] = 0 \},
\]
and the \defn{Lax--Kirchhoff space}
\[
\A(\Gamma) \coloneqq \mathcal{L}(\Gamma) \cap \mathcal{K}(\Gamma).
\]

\medskip

The Banach space $\B(\Gamma)$ carries a natural action of the Banach Lie group 
\[
C^2(I, G) = \bigl(C^2([0,1], G)\bigr)^E,
\]
by the gauge transformations
\[
g \cdot (A_0, A_1) = (gA_0g^{-1} - \dot{g}g^{-1}, gA_1g^{-1}),
\qquad g \in C^2(I, G),\; A \in \B(\Gamma).
\]
Let $\G(\Gamma)$ be the closed subgroup of $C^2(I, G)$ consisting of elements that match at the vertices; that is, an element $g = (g_e)_{e \in E} \in C^2(I, G)$ lies in $\G(\Gamma)$ if and only if there exist $g(v) \in G$ for each $v \in V$ such that $g_e(1) = g(t(e))$ and $g_e(0) = g(s(e))$ for all $e \in E$.
Then $\G(\Gamma)$ preserves the Lax--Kirchhoff space $\A(\Gamma)$.  
We denote by $\G_0(\Gamma)$ its normal subgroup consisting of elements that are trivial at the boundary, i.e.\ 
\[
\G_0(\Gamma) \coloneqq \{ g \in \G(\Gamma) : g(v) = 1 \text{ for all } v \in \partial \Gamma\}
\]

\begin{Def}
The \defn{Lax--Kirchhoff moduli space} of a quiver $\Gamma$ is the quotient
\[
\M(\Gamma) \coloneqq \A(\Gamma) / \G_0(\Gamma).
\]
\end{Def}

Finally, note that the action of $\G(\Gamma)$ on $\A(\Gamma)$ descends to an action of the quotient group
\[
\G(\Gamma) / \G_0(\Gamma) \cong G^{\partial\Gamma}
\]
on $\M(\Gamma)$.

%%%%%%%%%%%%%%%%%%%%%%%%%%%%%%%%%%%%%%%%%%%%%%%%%%%%%%%%%%%%%%%%
\section{The smooth structure}\label{zscudduj}
%%%%%%%%%%%%%%%%%%%%%%%%%%%%%%%%%%%%%%%%%%%%%%%%%%%%%%%%%%%%%%%%

Let $\Gamma = (V, E, s, t)$ be a connected quiver with non-empty boundary. 
In this section we show that the Lax--Kirchhoff moduli space $\M(\Gamma)$ carries a natural finite-dimensional smooth manifold structure.

\begin{Thm}[Smooth structure]\label{pa1025dm}
The subset $\A(\Gamma)$ is a Banach submanifold of $\B(\Gamma)$, and the quotient
\[
\M(\Gamma) \coloneqq \A(\Gamma) / \G_0(\Gamma)
\]
admits a unique smooth manifold structure for which the projection $\A(\Gamma) \to \M(\Gamma)$ is a smooth submersion. 
Moreover, $\M(\Gamma)$ is finite-dimensional with
\[
\dim \M(\Gamma) = 2(|E| - |\Gamma\sint|)\dim G.
\]
\end{Thm}

The proof proceeds in three steps. 
First, we verify that $\A(\Gamma)$ is a Banach submanifold of $\B(\Gamma)$. 
Second, we show that the $\G_0(\Gamma)$-action on $\A(\Gamma)$ is free and proper. 
Finally, as the infinite-dimensional setting requires, we construct local slices for this action. 
For general background on Banach manifolds, see, for example, \cite{Lang1999Fundamentals}.

\subsection{The Banach submanifold $\A(\Gamma) \subset \B(\Gamma)$}

Consider the map
\begin{equation}\label{bbqchqb6}
\psi : \B(\Gamma) \too C^0(I, \g) \times \g^{\Gamma\sint}, 
\quad 
\psi(A) = 
\left(
\dot{A}_1 + [A_0, A_1],
\left(
\sum_{e \in s^{-1}(v)} A_1^e(0) - \sum_{e \in t^{-1}(v)} A_1^e(1)
\right)_{v \in \Gamma\sint}
\right),
\end{equation}
so that $\A(\Gamma) = \psi^{-1}(0)$. 
In the next section we will interpret $\psi$ as a moment map for the action of $\G_0(\Gamma)$ on $\B(\Gamma)$. 
To show that $0$ is a regular value of $\psi$, we use the following general lemma, which will also be useful in other parts of the paper.

\begin{Lem}\label{qanw51t3}
Let $W$ be a finite-dimensional real vector space and $B \in C^0(I, \operatorname{End}(W))$. 
Define
\begin{align*}
R : C^1(I, W) &\too C^0(I, W) \times W^{\Gamma\sint},\\
x &\mtoo 
\left(
\dot{x} + Bx,
\left(
\sum_{e \in s^{-1}(v)} x_e(0) - \sum_{e \in t^{-1}(v)} x_e(1)
\right)_{v \in \Gamma\sint}
\right).
\end{align*}
Then $R$ is surjective and 
\[
\dim \ker R = (|E| - |\Gamma\sint|)\dim W.
\]
Moreover, if $T \subset E$ is a spanning tree of the underlying undirected graph, rooted at $r_0 \in \partial\Gamma^-$ with $r_0 = s(e_0)$, then for each $(a, b) \in C^0(I, W) \times W^{\Gamma\sint}$, the map
\begin{align}
R^{-1}(a, b) &\too W^{E \setminus T} \times W^{s^{-1}(\partial\Gamma) \setminus \{e_0\}} \times W^{t^{-1}(\partial\Gamma)}, \label{di992oem}\\
x &\mtoo \bigl((x_e(1))_{e \in E \setminus T},\; (x_e(0))_{e \in s^{-1}(\partial\Gamma) \setminus \{e_0\}},\; (x_e(1))_{e \in t^{-1}(\partial\Gamma)}\bigr) \nonumber
\end{align}
is an isomorphism.
\end{Lem}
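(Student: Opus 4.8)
The plan is to analyze the linear ODE operator $x \mapsto \dot{x} + Bx$ edgewise, using the fact that on each edge the initial value problem has a unique solution, and then bookkeep the vertex-matching constraints via a spanning tree. First, on a single edge $I_e = [0,1]$, the map $C^1([0,1], W) \to C^0([0,1], W) \times W$ sending $x_e \mapsto (\dot{x}_e + Bx_e, x_e(0))$ is a linear isomorphism: given $a \in C^0$ and an initial value $w \in W$, there is a unique $C^1$ solution to $\dot{x}_e + Bx_e = a$ with $x_e(0) = w$ (solve via the fundamental matrix $U_e(t)$ of $\dot{U} = -BU$, $U(0) = I$, so $x_e(t) = U_e(t)w + U_e(t)\int_0^t U_e(s)^{-1} a(s)\, ds$). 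Equivalently, the endpoint value $x_e(1)$ is an affine function of $(x_e(0), a|_{I_e})$, and conversely $x_e(0)$ is an affine function of $(x_e(1), a|_{I_e})$. So for each edge, prescribing $a|_{I_e}$ together with \emph{either} endpoint value of $x_e$ determines $x_e$ uniquely.

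Next I would set up the combinatorics. Fix a spanning tree $T$ rooted at $r_0 = s(e_0) \in \partial\Gamma^-$. The idea is that once we know $a \in C^0(I, W)$ and the datum $b \in W^{\Gamma\sint}$, plus the values of $x$ at the ``free'' half-edge-endpoints listed in the target of \eqref{di992oem}, we can reconstruct $x$ everywhere, and conversely any such choice is consistent. Propagate along $T$ starting from the root: at $r_0$ we are \emph{not} given $x_{e_0}(0)$, but we will recover it from the interior constraints. Actually the cleaner route is: traverse $T$ from the leaves inward or from the root outward, using that at each non-root tree vertex exactly one incident tree edge connects it to the already-processed part, so its $G$-value (the common endpoint value there) propagates across that edge via the edgewise isomorphism above. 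The non-tree edges $e \in E \setminus T$ have both endpoints already determined by the tree propagation, so prescribing $x_e(1)$ for $e \in E\setminus T$ is exactly the freedom needed — but wait, that would over-determine; rather, the correct statement is that on a non-tree edge we may freely prescribe $x_e(1)$ and this, with $a|_{I_e}$, determines $x_e(0)$, which must then match the tree-propagated value at $s(e)$: this matching is automatically absorbed because the constraints at interior vertices are exactly the $|\Gamma\sint|$ equations encoded in the second component of $R$, which we treat as \emph{prescribed} (equal to $b$) rather than \emph{imposed}. I would make this precise by a careful dimension count paired with injectivity: show the map \eqref{di992oem} is injective (if two solutions agree on all the listed endpoint values and have the same $(a,b)$, propagate along $T$ and across $E\setminus T$ to conclude they agree everywhere), and then observe that the domain $R^{-1}(a,b)$, if nonempty, is an affine space and the target has dimension $(|E\setminus T| + |s^{-1}(\partial\Gamma)\setminus\{e_0\}| + |t^{-1}(\partial\Gamma)|)\dim W = (|E| - |\Gamma\sint|)\dim W$ after using $|T| = |V| - 1$ and $|\partial\Gamma| = |s^{-1}(\partial\Gamma)| + |t^{-1}(\partial\Gamma)|$, $|V| = |\partial\Gamma| + |\Gamma\sint|$. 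Surjectivity of $R$ itself then follows since for any $(a,b)$ we can construct a preimage by choosing arbitrary values for the listed free endpoints and propagating; and $\dim\ker R = \dim R^{-1}(0) = (|E|-|\Gamma\sint|)\dim W$.

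The main obstacle is organizing the propagation argument cleanly, in particular checking that at \emph{every} interior vertex $v$ the single scalar (vector) constraint $\sum_{e\in s^{-1}(v)} x_e(0) - \sum_{e\in t^{-1}(v)} x_e(1) = b_v$ can be solved for exactly one previously-undetermined endpoint value without circularity. The right way to see this: contract the tree $T$ and think of building $x$ by first assigning, along $T$, a well-defined ``potential'' $x(w) \in W$ to each vertex $w$ so that $x_e$ on a tree edge is the unique solution through those endpoint values — but a tree edge has \emph{two} endpoint values and only one free parameter (the source value, say), so consistency along $T$ forces the target value, and this is where one must be careful that the interior-vertex constraints (which involve sums over possibly several edges) are compatible with the tree propagation; the resolution is that the constraints are \emph{prescribed data} $b$, not restrictions, so they simply define the remaining endpoint values. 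I would handle this by picking, for each interior vertex $v$, a distinguished incident edge $e(v)$ (e.g.\ not in $T$ if possible, chosen so the assignment $v \mapsto e(v)$ is injective and avoids $e_0$ — a Hall-type/orientation argument on the quiver gives this), solving the constraint at $v$ for the endpoint of $x_{e(v)}$ at $v$, and verifying the resulting recursion is acyclic. A somewhat slicker alternative, which I would adopt if the bookkeeping becomes unwieldy, is to prove surjectivity of $R$ abstractly (the ODE part is an isomorphism edgewise, and the remaining finite-dimensional linear constraint map onto $W^{\Gamma\sint}$ is surjective because one can route flow along the connected quiver), compute $\dim\ker R$ by the same count, and then establish that \eqref{di992oem} is injective with domain and codomain of equal dimension, hence an isomorphism.
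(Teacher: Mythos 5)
Your proposal is correct and follows essentially the same route as the paper: edgewise solvability of the ODE with one free endpoint, a spanning tree rooted at a boundary vertex, and the dimension count $|E| = |E\setminus T| + |s^{-1}(\partial\Gamma)\setminus\{e_0\}| + |t^{-1}(\partial\Gamma)| + |\Gamma\sint|$. The ``main obstacle'' you flag (acyclicity of the recursion) dissolves without any Hall-type argument if you first choose arbitrary solutions on all edges of $E\setminus T$ and then process the tree from the leaves toward the root, solving the Kirchhoff constraint at each interior vertex for the single undetermined endpoint, namely the one on the \emph{parent} edge in $T$ --- which is exactly what the paper does.
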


\begin{proof}
Let $(a,b)\in C^0(I,W)\times W^{\Gamma\sint}$. 
We construct a solution to $R(x) = (a, b)$ as follows.
Choose a root $r_0 \in \partial\Gamma$ and a spanning tree $T \subset E$ rooted at $r_0$. 
For each edge $e \in E \setminus T$, choose any $x_e$ solving $\dot{x}_e + B_e x_e = a_e$. 
We then determine $x_e$ for $e \in T$ inductively, starting from the leaves of $T$ and moving toward the root. 
At a vertex $v$, all edges adjacent to $v$ except one are already known, i.e.\ it only remains to determine the solution on the parent edge $e_v \in T$ of $v$.
The Kirchhoff constraint (the second component of $R(x) = (a,b)$) uniquely determines the missing endpoint value of $x_{e_v}$, after which $\dot{x}_{e_v} + B_{e_v}x_{e_v} = a_{e_v}$ admits a unique $C^1$ solution on $I_{e_v}$. 
Proceeding inductively defines $x$ on all edges. 
The boundary vertex $r_0$ at the root imposes no additional condition, so the construction terminates. 

The initial values determining the solution $x$ correspond exactly to the map \eqref{di992oem}, which is therefore surjective. 
To compute $\dim \ker R$, note that the first component of $R(x) = 0$ implies that $x$ is determined by its initial conditions on each edge, giving $|E|\dim W$ degrees of freedom. 
The second component imposes $|\Gamma\sint|\dim W$ independent constraints, so $\dim \ker R = (|E| - |\Gamma\sint|)\dim W$. 
Since $|E| = |E \setminus T| + |V \setminus \{r_0\}| = |E \setminus T| + |s^{-1}(\partial\Gamma) \setminus \{e_0\}| + |t^{-1}(\partial\Gamma)| + |\Gamma\sint|$, the map \eqref{di992oem} is a bijection by dimension count.
\end{proof}

\begin{Prop}
The subset $\A(\Gamma) \subset \B(\Gamma)$ is a Banach submanifold.
\end{Prop}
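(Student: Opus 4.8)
The plan is to realize $\A(\Gamma) = \psi^{-1}(0)$ as the preimage of a regular value of the smooth map $\psi$ from \eqref{bbqchqb6}, so that the implicit function theorem for Banach manifolds applies. First I would check that $\psi$ is smooth: the map $A \mapsto \dot A_1 + [A_0, A_1]$ is a composition of the bounded linear map $A_1 \mapsto \dot A_1$ (from $C^1$ to $C^0$), the bounded bilinear map $(A_0, A_1) \mapsto [A_0, A_1]$, and projections, while the Kirchhoff component is a bounded linear evaluation map; hence $\psi$ is a polynomial (indeed quadratic) map between Banach spaces and therefore smooth. Since $\B(\Gamma)$ and the target are vector spaces, it remains to show that the derivative $d\psi_A$ is surjective with complemented (in fact finite-codimensional, hence complemented) kernel at every $A \in \A(\Gamma)$.

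The key step is the surjectivity of $d\psi_A$, and this is exactly where Lemma \ref{qanw51t3} is applied. Computing the differential at $A = (A_0, A_1) \in \A(\Gamma)$ in the direction $\xi = (\xi_0, \xi_1) \in \B(\Gamma)$ gives
\[
d\psi_A(\xi_0, \xi_1) = \left( \dot\xi_1 + [\xi_0, A_1] + [A_0, \xi_1],\ \left( \sum_{e \in s^{-1}(v)} \xi_1^e(0) - \sum_{e \in t^{-1}(v)} \xi_1^e(1) \right)_{v \in \Gamma\sint} \right).
\]
The crucial observation is that we are free to choose $\xi_0$: given any target $(a, b) \in C^0(I, \g) \times \g^{\Gamma\sint}$, set $\xi_0 = 0$ (or absorb the term $[\xi_0, A_1]$ however is convenient) and solve for $\xi_1$ alone. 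With $\xi_0 = 0$, the equation becomes $\dot\xi_1 + \ad_{A_0}\xi_1 = a$ together with the Kirchhoff constraint, which is precisely the system $R(\xi_1) = (a, b)$ of Lemma \ref{qanw51t3} with $W = \g$ and $B = \ad_{A_0} \in C^0(I, \operatorname{End}(\g))$ (note $A_0 \in C^1 \subset C^0$, so $B$ is continuous). The lemma guarantees that $R$ is surjective, so $\xi_1$ exists; thus $d\psi_A$ is surjective. Moreover $\ker d\psi_A$ contains the infinite-dimensional complemented subspace $\{(\xi_0, 0) : \xi_0 \in C^1(I, \g),\ \text{no constraint}\}$ is not quite right—rather, I would argue directly that $\ker d\psi_A$ is complemented: since $d\psi_A$ is a surjective bounded linear map onto a Banach space and is split (one can build a bounded right inverse from the explicit construction in the proof of Lemma \ref{qanw51t3}, choosing the $\xi_e$ on non-tree edges via a bounded solution operator for the linear ODE and reading off the finitely many initial values), its kernel is the image of a bounded projection, hence complemented.

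The main obstacle I anticipate is the complementation/splitting issue, which has no analogue in the finite-dimensional case: surjectivity of $d\psi_A$ alone does not make $\psi^{-1}(0)$ a submanifold in the Banach setting—one needs $\ker d\psi_A$ to be a complemented subspace of $\B(\Gamma)$. The clean way around this is to exhibit an explicit bounded linear right inverse $\sigma$ of $d\psi_A$: the construction in Lemma \ref{qanw51t3} (pick arbitrary ODE solutions on $E \setminus T$, propagate through the tree) is manifestly continuous in $(a,b)$ if one uses the standard solution operator for $\dot\xi + B\xi = a$ with prescribed endpoint, which depends continuously and linearly on $a$ and on the endpoint value. Then $\mathrm{id} - \sigma \circ d\psi_A$ is a bounded projection onto $\ker d\psi_A$, so the kernel is complemented and the submersion theorem for Banach manifolds (see \cite{Lang1999Fundamentals}) shows $\A(\Gamma)$ is a Banach submanifold, with tangent space $T_A \A(\Gamma) = \ker d\psi_A$ and codimension $(\dim\g)|I|$-worth of $C^0$-data plus $|\Gamma\sint|\dim\g$—though only the fact of being a submanifold is needed here, the dimension count having been handled by Lemma \ref{qanw51t3}.
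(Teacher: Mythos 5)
Your proposal is correct and takes essentially the same route as the paper: compute $d\psi_A$, set $X_0=0$ to reduce surjectivity to Lemma \ref{qanw51t3} with $W=\g$ and $B=\ad_{A_0}$, and invoke the implicit function theorem. Your additional care about the splitting of $\ker d\psi_A$ (via a bounded right inverse extracted from the tree construction in the lemma) addresses a genuine Banach-space point that the paper leaves implicit, and your argument for it is sound.
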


\begin{proof}
By Lemma \ref{qanw51t3}, the differential
\begin{align*}
d\psi_A : \B(\Gamma) &\too C^0(I, \g) \times \g^{\Gamma\sint},\\
X &\mtoo 
\left(
\dot{X}_1 + [A_0, X_1] + [X_0, A_1],
\left(
\sum_{e \in s^{-1}(v)} X_1^e(0) - \sum_{e \in t^{-1}(v)} X_1^e(1)
\right)_{v \in \Gamma\sint}
\right)
\end{align*}
is surjective for all $A \in \psi^{-1}(0)$. 
Hence $0$ is a regular value of $\psi$, and $\A(\Gamma)$ is a Banach submanifold of $\B(\Gamma)$ by the inverse mapping theorem.
\end{proof}

\subsection{A free and proper action}

We now show that the action of $\G_0(\Gamma)$ on $\A(\Gamma)$ is free and proper.
This will be essential for the construction of local slices in the next subsection.

\begin{Prop}\label{xo75edv6}
The action of $\G_0(\Gamma)$ on $\A(\Gamma)$ is free.
\end{Prop}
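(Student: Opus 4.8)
The plan is to show that if $g \in \G_0(\Gamma)$ fixes some $A \in \A(\Gamma)$, then $g = 1$ (the constant identity on every edge). Writing out the fixed-point condition on a single edge $e$, the equation $g_e \cdot A^e = A^e$ says in particular that $\dot g_e g_e^{-1} = g_e A_0^e g_e^{-1} - A_0^e$, i.e.\ $\dot g_e = g_e A_0^e - A_0^e g_e$. This is a linear first-order ODE in $g_e$ with a unique solution given an initial value, so $g_e$ is completely determined on $I_e$ by $g_e(0)$. The idea is then that vanishing of $g$ at the boundary propagates across the quiver.

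Concretely, first I would observe that since $\Gamma$ is connected and has non-empty boundary, there is a spanning tree $T$ rooted at some $r_0 \in \partial\Gamma$. On the edge $e_0$ incident to $r_0$, the matching condition forces $g_{e_0}$ to take the value $g(r_0) = 1$ at the endpoint corresponding to $r_0$; by the ODE uniqueness above, $g_{e_0} \equiv 1$ on all of $I_{e_0}$, hence $g(v) = 1$ for the other endpoint $v$ of $e_0$. Now propagate along the tree: at any vertex $v$ where $g(v) = 1$ has been established, every edge $e$ incident to $v$ has $g_e = 1$ at the $v$-endpoint, so again by ODE uniqueness $g_e \equiv 1$, and hence $g(w) = 1$ at the opposite endpoint $w$. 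Since $T$ spans $V$, we get $g(v) = 1$ for all $v \in V$, and since every edge of $E$ is incident to some vertex with $g = 1$ at an endpoint, the same ODE argument gives $g_e \equiv 1$ for all $e \in E$. Thus $g = 1$.

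The main point to get right is the ODE uniqueness step: one must confirm that $g_e \cdot A^e = A^e$ really does reduce, via the $A_0$-component, to a linear ODE $\dot g_e = g_e A_0^e - A_0^e g_e$ with Lipschitz (indeed $C^0$, hence locally Lipschitz after fixing $A$) right-hand side, so that the initial-value problem has a unique solution on $[0,1]$; and that the constant function $1$ is a solution with the boundary initial value, forcing $g_e \equiv 1$. The $A_1$-component of the fixed-point equation is then automatically satisfied and need not be used. The only mild subtlety is bookkeeping the orientations (which endpoint of $e$ is $s(e)$ versus $t(e)$) when propagating the condition $g(v) = 1$ through the matching relations $g_e(0) = g(s(e))$, $g_e(1) = g(t(e))$, but this is routine given connectedness of $\Gamma$. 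I do not anticipate a genuine obstacle here; the argument is essentially the uniqueness of parallel transport for the connection $A_0\,dt$.
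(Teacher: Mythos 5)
Your proposal is correct and follows essentially the same route as the paper: extract the linear ODE $\dot g_e = g_eA_0^e - A_0^eg_e$ from the $A_0$-component of the fixed-point equation, use uniqueness of solutions with the boundary initial value $g_e = 1$ to conclude $g_e \equiv 1$ on edges touching $\partial\Gamma$, and propagate through the vertex-matching conditions by connectedness. The spanning-tree bookkeeping is just a more explicit version of the paper's inductive phrasing.
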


\begin{proof}
Let $g \in \G_0(\Gamma)$ and $A \in \A(\Gamma)$ be such that $g \cdot A = A$. 
Since $\partial\Gamma \neq \emptyset$, there exists an edge $e \in E$ adjacent to $\partial\Gamma$. 
Assume $s(e) \in \partial\Gamma$; the other case is analogous. 
We then have $g_e \cdot A_e = A_e$ and $g_e(0) = 1$, which implies that $g_e$ satisfies the linear ordinary differential equation $\dot{g}_e = [A_0^e, g_e]$ with initial condition $g_e(0) = 1$. 
By uniqueness of solutions, $g_e \equiv 1$. 
Applying the same argument to any edge $f$ adjacent to $t(e)$ yields $g_f \equiv 1$, and by connectedness of $\Gamma$, we conclude that $g \equiv 1$.
\end{proof}

To establish properness, we first record the following useful fact.

\begin{Lem}\label{0b45jf95}
Let $\mathcal{P} \coloneqq \{g \in C^2([0, 1], G) : g(0) = 1\}$. 
Then the map
\[
\Phi : \mathcal{P} \too C^1([0, 1], \g), 
\qquad 
\Phi(g) = -\dot{g} g^{-1},
\]
is a homeomorphism of Banach manifolds.
\end{Lem}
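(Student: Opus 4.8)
The plan is to show $\Phi$ is a bijection with continuous inverse by solving an ODE. \emph{Surjectivity and injectivity.} Given $\xi \in C^1([0,1],\g)$, the linear ODE $\dot g = -\xi g$ with initial condition $g(0) = 1$ has a unique solution on $[0,1]$, which lies in $G$ (since $\xi$ is $\g$-valued and $G$ is closed under the relevant matrix exponential flow: $\frac{d}{dt}(g g^{-1}) = 0$ shows $g$ stays in $G$ once one checks the solution to the matrix ODE with $G$-valued initial data remains $G$-valued, using that $-\xi(t) \in \g = \Lie(G)$). Because $\xi \in C^1$, bootstrapping the ODE gives $g \in C^2$, so $g \in \mathcal{P}$, and by construction $\Phi(g) = -\dot g g^{-1} = \xi$. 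Uniqueness of the ODE solution gives injectivity of $\Phi$: if $\Phi(g_1) = \Phi(g_2)$ then both solve the same ODE with the same initial condition.

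\emph{Continuity of $\Phi$.} The map $g \mapsto \dot g$ is bounded linear $C^2 \to C^1$, the map $g \mapsto g^{-1}$ is smooth on the Banach manifold $C^2([0,1],G)$ (pointwise inversion in the matrix group, smooth by the usual Neumann-series / implicit-function argument), and multiplication $C^1 \times C^1 \to C^1$ is continuous bilinear; composing gives continuity of $\Phi$.

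\emph{Continuity of $\Phi^{-1}$.} This is the step requiring care. Write $\Phi^{-1}(\xi) = g_\xi$ where $g_\xi$ solves $\dot g = -\xi g$, $g(0)=1$. One approach: the assignment $\xi \mapsto g_\xi$ is the time-$1$ (indeed time-$t$) flow of a time-dependent linear ODE depending continuously — in fact smoothly — on the parameter $\xi \in C^1([0,1],\g)$, so by standard smooth dependence on parameters for ODEs (applied in the Banach setting, e.g.\ \cite{Lang1999Fundamentals}), $\xi \mapsto g_\xi$ is continuous into $C^0([0,1],G)$; then from $\dot g_\xi = -\xi g_\xi$ one upgrades to continuity into $C^1$, and differentiating once more ($\ddot g_\xi = -\dot\xi g_\xi - \xi \dot g_\xi$) upgrades to $C^2$. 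Alternatively, and perhaps cleaner, one estimates directly: for $\xi, \eta$ close, $h \coloneqq g_\xi g_\eta^{-1}$ satisfies $\dot h = -\xi h + h\eta$ (equivalently $\dot h = -(\xi - \eta)h + [\ldots]$ after rearranging), with $h(0) = 1$, so Gronwall's inequality bounds $\|h - 1\|_{C^0}$ by a constant times $\|\xi - \eta\|_{C^0}$; feeding this back into the ODE for $g_\xi - g_\eta$ and its derivative gives the $C^1$ and then $C^2$ estimates, proving $\Phi^{-1}$ is (locally Lipschitz, hence) continuous.

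The main obstacle is purely the bookkeeping of the $C^2$/$C^1$ regularity: one must be careful that $\Phi$ genuinely maps into $C^1$ (it does: $-\dot g g^{-1}$ has a derivative $-\ddot g g^{-1} + \dot g g^{-1}\dot g g^{-1}$ which is only $C^0$, consistent with the target) and that the inverse lands back in $C^2$, which forces the slightly awkward two-step bootstrap in the continuity-of-inverse argument. The Banach-manifold structure on $C^2([0,1],G)$ — charts via a fixed exponential-type coordinate on $G$ — and the fact that $G$-valued solutions of the ODE stay in $G$ are the two facts I would invoke without belaboring; everything else is Gronwall plus linearity.
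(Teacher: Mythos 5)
Your proposal is correct and follows essentially the same route as the paper: the inverse is constructed by solving the linear initial value problem $\dot g + \xi g = 0$, $g(0)=1$, and its continuity is established by a Gr\"onwall estimate followed by bootstrapping through the first and second derivatives. The only cosmetic difference is that your ``direct estimate'' works with the multiplicative difference $h = g_\xi g_\eta^{-1}$, whereas the paper estimates the additive difference $g_\xi - g_\eta$ directly.
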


\begin{proof}
Given $v\in C^1([0,1],\g)$, the equation $\Phi(g)=v$ is equivalent to the initial value problem
\[
\dot g + vg = 0,\qquad g(0)=1.
\]
By standard existence and uniqueness, this has a unique solution $g\in C^2([0,1],G)$, yielding a well-defined inverse $\Psi:C^1([0,1],\g)\to\mathcal{P}$.

It remains to prove that $\Psi$ is continuous. Let $v_n\to v$ in $C^1([0,1],\g)$ and set $g_n=\Psi(v_n)$, $g=\Psi(v)$.
Write $h_n\coloneqq g_n-g$. Then
\begin{equation}\label{l6xi4rz2}
\dot h_n=-v_n g_n+vg=-v_n(g+h_n)+vg=-v_n h_n-(v_n-v)g,
\end{equation}
so, for $t\in[0,1]$,
\[
h_n(t)=-\int_0^t\big(v_n(s)h_n(s)+(v_n(s)-v(s))g(s)\big)ds.
\]
Let $M\coloneqq \sup_n\|v_n\|_\infty$ and $C\coloneqq \|g\|_\infty$. Taking norms gives
\[
\|h_n(t)\|\le C\|v_n-v\|_\infty t+M\int_0^t\|h_n(s)\|ds.
\]
By Grönwall’s inequality,
\[
\|h_n(t)\|\le t e^{tM} \|v_n-v\|_\infty\quad\text{for all }t\in[0,1],
\]
hence $\|h_n\|_\infty\to 0$.
By \eqref{l6xi4rz2}, we have
\[
\|\dot h_n\|_\infty\le M\|h_n\|_\infty+C\|v_n-v\|_\infty\longrightarrow 0.
\]
Differentiating once more,
\[
\ddot h_n = -\dot v_n h_n - v_n \dot h_n - (\dot v_n-\dot v)g - (v_n-v)\dot g,
\]
whence, with $M_1\coloneqq \sup_n\|\dot v_n\|_\infty$ and $\|\dot g\|_\infty\le \|v\|_\infty\|g\|_\infty$,
\[
\|\ddot h_n\|_\infty\le M_1\|h_n\|_\infty + M\|\dot h_n\|_\infty
+ \|g\|_\infty\|\dot v_n-\dot v\|_\infty + \|\dot g\|_\infty\|v_n-v\|_\infty \longrightarrow 0.
\]
Thus $g_n\to g$ in $C^2([0,1],G)$, proving that $\Psi$ is continuous and therefore that $\Phi$ is a homeomorphism.
\end{proof}

\begin{Prop}
The action of $\G_0(\Gamma)$ on $\A(\Gamma)$ is proper.
\end{Prop}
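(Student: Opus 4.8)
The plan is to prove properness by reducing to the boundary data. Recall that an action of a topological group $\G_0(\Gamma)$ on $\A(\Gamma)$ is proper if the map $\G_0(\Gamma) \times \A(\Gamma) \to \A(\Gamma) \times \A(\Gamma)$, $(g, A) \mapsto (g \cdot A, A)$, is proper, equivalently: given sequences $A_n \to A$ and $g_n \cdot A_n \to A'$ in $\A(\Gamma)$, the sequence $g_n$ has a convergent subsequence in $\G_0(\Gamma)$. So suppose $A_n = (A_0^{e,n}, A_1^{e,n})_{e \in E} \to A$ and $A_n' \coloneqq g_n \cdot A_n \to A'$.

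First I would reduce the gauge-fixing ODE to an edge-by-edge statement. On each edge $e$, the relation $A_n' = g_n \cdot A_n$ gives $A_0^{e,n\,\prime} = g_{e,n} A_0^{e,n} g_{e,n}^{-1} - \dot g_{e,n} g_{e,n}^{-1}$, i.e. $-\dot g_{e,n} g_{e,n}^{-1} = A_0^{e,n\,\prime} - g_{e,n} A_0^{e,n} g_{e,n}^{-1}$. Here is where I use compactness of $G$: since $G$ is compact the functions $g_{e,n}$ take values in a compact set, and the right-hand side is then bounded in $C^0$ uniformly in $n$ (as $A_0^{e,n}, A_0^{e,n\,\prime}$ are $C^0$-convergent hence bounded), so $\dot g_{e,n}$ is $C^0$-bounded; differentiating the ODE once more and using $C^1$-convergence of the $A_0$-components gives a $C^1$-bound on $\dot g_{e,n}$, hence a $C^2$-bound on $g_{e,n}$. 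By Arzelà--Ascoli (applied componentwise in the matrix embedding of $G$), after passing to a subsequence $g_{e,n} \to g_e$ in $C^1([0,1], G)$ for every $e$; one more application upgrades this to $C^2$ using the twice-differentiated ODE as in Lemma \ref{0b45jf95}.

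Next I would pin down which subgroup the limit lies in. The matching conditions $g_{e,n}(1) = g_n(t(e))$ and $g_{e,n}(0) = g_n(s(e))$ pass to the limit, so $g = (g_e)_e$ satisfies the same matching conditions and hence $g \in \G(\Gamma)$; and the boundary conditions $g_n(v) = 1$ for $v \in \partial\Gamma$ pass to the limit, giving $g \in \G_0(\Gamma)$. Finally, continuity of the action shows $g \cdot A = A'$, but in fact we only need the convergence $g_n \to g$ in $\G_0(\Gamma)$, which is exactly what properness requires. Alternatively, and perhaps more cleanly, one can first establish properness on a single edge with free boundary — where Lemma \ref{0b45jf95} already does almost all the work, identifying $\mathcal P \cong C^1([0,1],\g)$ — and then deduce the quiver case by observing that $\G_0(\Gamma)$ is a closed subgroup of a product of such edge groups and $\A(\Gamma)$ is a closed invariant subspace, using that a closed subgroup acting on a closed invariant subset of a proper $H$-space is proper.

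The main obstacle is the uniform $C^2$-bound on the gauge transformations $g_{e,n}$: the raw identity only controls $-\dot g_{e,n} g_{e,n}^{-1}$ in terms of the $A_0$-data, so one must bootstrap through the ODE to get derivative bounds, and this is precisely the place where compactness of $G$ (boundedness of $g_{e,n}$ itself) is indispensable — without it the argument fails, as it must, since for noncompact $G$ the statement can break. The estimates themselves are routine Grönwall-type arguments essentially identical to those in the proof of Lemma \ref{0b45jf95}, so I would phrase the proof so as to invoke that lemma rather than repeat them.
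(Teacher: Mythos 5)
Your proof is correct, but it takes a genuinely different route from the paper's. The paper never extracts subsequences: it uses Lemma \ref{0b45jf95} to solve the gauge-fixing ODE exactly (writing $A_n^e = h_n^e\cdot(0,*)$ with $h_n^e$ converging in $\mathcal{P}$), deduces convergence of $g_n^e h_n^e$ from convergence of $\Phi(g_n^e h_n^e)$, and then propagates from a boundary vertex through the connected quiver via the matching conditions $g_n^e(1)=g_n^f(0)$; this yields convergence of the \emph{full} sequence $g_n$ and relies on $\partial\Gamma\neq\emptyset$, connectedness, and the normalization $g(0)=1$ rather than on compactness of $G$. You instead get uniform $C^2$-bounds on $g_{e,n}$ from the identity $\dot g_{e,n}=g_{e,n}A_0^{e,n}-(g_n\cdot A_n)_0^{e}\,g_{e,n}$ together with boundedness of $G$ in a matrix representation, apply Arzel\`a--Ascoli edge by edge, and conclude via closedness of $\G_0(\Gamma)$. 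This is more robust in one direction --- it needs neither connectedness nor a non-empty boundary and only produces subsequential convergence, which is all properness requires --- but it leans essentially on compactness of $G$, whereas the paper's mechanism (the homeomorphism $\Phi$) does not. One small point of phrasing: a literal second application of Arzel\`a--Ascoli at the $C^2$ level would require a uniform bound on $\dddot g_{e,n}$, which is unavailable since the $A_0$-data is only $C^1$; the correct (and intended, given your reference to the differentiated ODE as in Lemma \ref{0b45jf95}) mechanism is that once $g_{e,n}\to g_e$ in $C^1$ along a subsequence, the right-hand side of the first-order identity converges in $C^1$, so $\dot g_{e,n}$ converges in $C^1$ and hence $g_{e,n}$ converges in $C^2$. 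Your closing alternative (properness of the single-edge free-boundary action, then restriction to the closed subgroup $\G_0(\Gamma)$ and the closed invariant subset $\A(\Gamma)$) is also sound, but note that the free-boundary single-edge case again needs compactness of $G$ to control the value $g_{e,n}(0)$, so it does not escape that hypothesis.
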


\begin{proof}
Let $g_n \in \G_0(\Gamma)$ and $A_n \in \A(\Gamma)$ be such that both $(A_n)_{n=1}^\infty$ and $(g_n \cdot A_n)_{n=1}^\infty$ converge in $\A(\Gamma)$. 
We show, by induction on the edges of $\Gamma$, that $g_n^e$ converges in $C^2([0, 1], G)$ for all $e \in E$.

Without loss of generality, assume that $\partial\Gamma^- \neq \emptyset$, and let $e \in E$ be such that $s(e) \in \partial\Gamma$. 
By Lemma \ref{0b45jf95}, there exists $h_n^e \in \mathcal{P}$ satisfying
\[
A_n^e = (\Phi(h_n^e), *) = h_n^e \cdot (0, *).
\]
Since $A_n^e$ converges in $C^1([0, 1], \g)$ and $\Phi$ is a homeomorphism, it follows that $h_n^e$ converges in $\mathcal{P}$. 
Now observe that
\[
g_n^e \cdot A_n^e = g_n^e h_n^e \cdot (0, *) = (\Phi(g_n^e h_n^e), *),
\]
and since $g_n^e \cdot A_n^e$ converges, so does $\Phi(g_n^e h_n^e)$. 
By Lemma \ref{0b45jf95} again, this implies that $g_n^e h_n^e$ converges in $\mathcal{P}$, and hence $g_n^e$ converges in $C^2([0, 1], G)$.

Next, let $f$ be any edge adjacent to $t(e)$; assume $s(f) = t(e)$ (the other case is analogous). 
As before, we can write $A_n^f = h_n^f \cdot (0, *)$ with $h_n^f$ converging in $\mathcal{P}$. 
Set $a_n \coloneqq g_n^e(1) = g_n^f(0) \in G$. Since $g_n^e$ converges in $\mathcal{P}$, the sequence $(a_n)_{n=1}^\infty$ converges in $G$. 
Define $k_n^f \coloneqq a_n^{-1} g_n^f h_n^f \in \mathcal{P}$. We have
\[
g_n^f \cdot A_n^f 
= g_n^f h_n^f \cdot (0, *) 
= a_n k_n^f \cdot (0, *) 
= (\Ad_{a_n}\Phi(k_n^f), *),
\]
and since both $a_n$ and $g_n^f \cdot A_n^f$ converge, it follows that $\Phi(k_n^f)$ converges, hence $k_n^f$ converges in $\mathcal{P}$. 
Consequently, $g_n^f$ converges in $C^2([0, 1], G)$.

Repeating this argument inductively along adjacent edges and using the connectedness of $\Gamma$, we conclude that $g_n^e$ converges in $C^2([0, 1], G)$ for all $e \in E$. 
Therefore, $g_n$ converges in $\G_0(\Gamma)$, and the action is proper.
\end{proof}

\subsection{Existence of slices}\label{75drzcuc}

We now prove the existence of local slices for the action of $\G_0(\Gamma)$ on $\A(\Gamma)$, culminating in the proof of Theorem \ref{pa1025dm}. 
Our argument is inspired by the approach of Kronheimer \cite{kronheimer1988hyperkahler}.

For $A \in \A(\Gamma)$, the tangent space is
\[
T_A\A(\Gamma)
= 
\{ Y \in \K(\Gamma) : \dot{Y}_1 + [Y_0, A_1] + [A_0, Y_1] = 0 \}.
\]
Let
\[
C^2_0(I, \g) 
\coloneqq \operatorname{Lie}(\G_0(\Gamma)).
\]
That is, $C^2_0(I, \g)$ consists of all elements $u \in C^2(I, \g)$ for which there exist values $u(v) \in \g$ for each $v \in V$ such that 
$u_e(0) = u(s(e))$ and $u_e(1) = u(t(e))$ for all $e \in E$, 
and $u(v) = 0$ for all $v \in \partial\Gamma$.
The infinitesimal action of $\G_0(\Gamma)$ at $A$ is the linear map
\[
D_A : C^2_0(I, \g) \too T_A\A(\Gamma),
\qquad
u \mtoo ([u, A_0] - \dot{u}, [u, A_1]).
\]
There is a jointly continuous, non-degenerate pairing between $C^2_0(I, \g)$ and $C^0(I, \g) \times \g^{\Gamma\sint}$ given by
\begin{equation}\label{v5stgm73}
C^2_0(I, \g) \times (C^0(I, \g) \times \g^{\Gamma\sint}) \too \R, \quad \bigl\langle u, (Y, Z) \bigr\rangle
\coloneqq
\int_I \langle u(t), Y(t) \rangle dt
+ \sum_{v \in \Gamma\sint}
  \langle u(v), Z(v) \rangle.
\end{equation}
The tangent space $T_A\A(\Gamma)$ also carries the natural inner product
\[
\langle Y, Z \rangle
=
\int_I
  \bigl(
    \langle Y_0(t), Z_0(t) \rangle
    + \langle Y_1(t), Z_1(t) \rangle
  \bigr)dt.
\]
We will show in the next proposition that the operator
\begin{align*}
D_A^* : T_A\A(\Gamma) &\too C^0(I, \g) \times \g^{\Gamma\sint}\\
Y &\mtoo
\left(
  \dot{Y}_0 + [A_0, Y_0] + [A_1, Y_1],
  \left(
    \sum_{e \in s^{-1}(v)} Y_0^e(0)
    - \sum_{e \in t^{-1}(v)} Y_0^e(1)
  \right)_{v \in \Gamma\sint}
\right)
\end{align*}
is an adjoint of $D_A$ with respect to these pairings.

\begin{Prop}
We have
\[
\langle D_Au, Y \rangle = \langle u, D_A^*Y \rangle,
\]
for all $u \in C^2_0(I, \g)$ and $Y \in T_A\A(\Gamma)$.
\end{Prop}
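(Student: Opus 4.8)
The plan is to prove the identity by a direct integration-by-parts computation, using only the $\Ad$-invariance of the inner product on $\g$ together with the matching and vanishing conditions defining $C^2_0(I,\g)$; notably, the linearized Lax constraint on $Y$ is never used. First I would expand the left-hand side from the definitions $D_Au = ([u,A_0]-\dot u,[u,A_1])$ and the inner product on $T_A\A(\Gamma)$, writing
\[
\langle D_Au, Y\rangle = \int_I \bigl(\langle [u,A_0], Y_0\rangle - \langle \dot u, Y_0\rangle + \langle [u,A_1], Y_1\rangle\bigr)\,dt.
\]
The two bracket terms are dealt with by the cyclic identity $\langle [x,y],z\rangle = \langle x,[y,z]\rangle$, equivalent to $\Ad$-invariance of $\langle\cdot,\cdot\rangle$: they become $\int_I\langle u,[A_0,Y_0]\rangle\,dt$ and $\int_I\langle u,[A_1,Y_1]\rangle\,dt$, which are exactly the contributions of the bracket parts of the first component of $D_A^*Y$ to $\langle u, D_A^*Y\rangle$.

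For the middle term I would integrate by parts on each interval $I_e$ separately: since $u_e\in C^2$ and $Y_0^e\in C^1$,
\[
\int_0^1 \langle -\dot u_e, Y_0^e\rangle\,dt = \langle u_e(0), Y_0^e(0)\rangle - \langle u_e(1), Y_0^e(1)\rangle + \int_0^1 \langle u_e, \dot Y_0^e\rangle\,dt.
\]
(The fact that $Y_0\in C^1$ also ensures $\dot Y_0\in C^0(I,\g)$, so that $D_A^*Y$ indeed takes values in $C^0(I,\g)\times\g^{\Gamma\sint}$, as asserted.) Summing over $e\in E$ produces the term $\int_I\langle u,\dot Y_0\rangle\,dt$ — completing the first component of $D_A^*Y$ — plus the total endpoint contribution $\sum_{e\in E}\bigl(\langle u_e(0), Y_0^e(0)\rangle - \langle u_e(1), Y_0^e(1)\rangle\bigr)$.

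The last step is to reorganize this endpoint sum by vertices. Using $u_e(0)=u(s(e))$ and $u_e(1)=u(t(e))$ and grouping edges according to their endpoints, it becomes $\sum_{v\in V}\bigl\langle u(v),\ \sum_{e\in s^{-1}(v)}Y_0^e(0) - \sum_{e\in t^{-1}(v)}Y_0^e(1)\bigr\rangle$, and since $u(v)=0$ for all $v\in\partial\Gamma$ this collapses to the sum over $v\in\Gamma\sint$, which is precisely the pairing of $u$ with the second component of $D_A^*Y$ in \eqref{v5stgm73}. Assembling the pieces gives $\langle D_Au, Y\rangle = \langle u, D_A^*Y\rangle$. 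There is no substantive obstacle: the only care needed is in the sign bookkeeping of the integration by parts and in the edge-to-vertex reindexing of the boundary terms — and it is exactly the vanishing of $u$ on $\partial\Gamma$ that makes $D_A^*$ record only the interior Kirchhoff-type defect rather than a genuine boundary contribution.
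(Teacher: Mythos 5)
Your computation is correct and is essentially the same as the paper's proof: expand $\langle D_Au,Y\rangle$, move the brackets across the pairing by $\Ad$-invariance, integrate the $\dot u$ term by parts edgewise, and regroup the endpoint terms by vertices, where $u|_{\partial\Gamma}=0$ leaves exactly the interior Kirchhoff-type defect. Your added observations (that the linearized Lax constraint on $Y$ plays no role, and that $Y_0\in C^1$ guarantees $D_A^*Y$ lands in the right space) are accurate.
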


\begin{proof}
This is a straightforward computation using integration by parts and the $\Ad$-invariance of the inner product on $\g$. 
Indeed,
\begin{align*}
\langle D_Au, Y \rangle
&=
\int_I
  \bigl(
    \langle [u, A_0] - \dot{u}, Y_0 \rangle
    + \langle [u, A_1], Y_1 \rangle
  \bigr) \\
&=
\int_I
  \langle u, \dot{Y}_0 + [A_0, Y_0] + [A_1, Y_1] \rangle
  + \sum_{e \in E}
    \bigl(
      \langle u_e(0), Y_0^e(0) \rangle
      - \langle u_e(1), Y_0^e(1) \rangle
    \bigr) \\
&=
\int_I
  \langle u, \dot{Y}_0 + [A_0, Y_0] + [A_1, Y_1] \rangle
  + \sum_{v \in \Gamma\sint}
    \Bigl\langle
      u(v),
      \sum_{e \in s^{-1}(v)} Y_0^e(0)
      - \sum_{e \in t^{-1}(v)} Y_0^e(1)
    \Bigr\rangle \\
&= \langle u, D_A^*Y \rangle.\qedhere
\end{align*}
\end{proof}

A natural candidate for a slice at $A$ is a submanifold $S$ whose tangent space at $A$ is orthogonal to the $\G_0(\Gamma)$-orbit through $A$. 
Since $\im D_A$ is the infinitesimal orbit, this orthogonal complement is $\ker D_A^*$.
To realize this decomposition, we will need the following analytic result. 
For $A_0 \in C^1(I, \g)$ and $u \in C^2(I, \g)$, set $\nabla_{A_0} u \coloneqq \dot{u} + [A_0, u]$.

\begin{Prop}\label{derirtj5}
The operator
\begin{align}
L_A \coloneqq D_A^*D_A : C^2_0(I, \g) &\too C^0(I, \g) \times \g^{\Gamma\sint}
\label{jnpsk711} \\
u &\mtoo
\left(
  -\nabla_{A_0}^2 u - \ad_{A_1}^2 u,\;
  \left(
    \sum_{e \in s^{-1}(v)} \dot{u}^e(0)
    - \sum_{e \in t^{-1}(v)} \dot{u}^e(1)
  \right)_{v \in \Gamma\sint}
\right)
\nonumber
\end{align}
is an isomorphism of Banach spaces. 
In particular,
\[
T_A\A(\Gamma)
= \im D_A \oplus \ker D_A^*
\]
is a closed, topological direct sum.
\end{Prop}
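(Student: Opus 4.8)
The plan is to prove that $L_A = D_A^* D_A$ is an isomorphism by showing it is injective, has closed range, and has dense range — the latter two by an elliptic-type estimate on each edge combined with the Kirchhoff boundary conditions. First I would observe that $L_A$ is a bounded linear map between the stated Banach spaces, since $D_A$ and $D_A^*$ are manifestly bounded (they involve only $A_0, A_1 \in C^1$ and one derivative each). For \textbf{injectivity}, suppose $L_A u = 0$ with $u \in C^2_0(I,\g)$. Pairing with $u$ and using the adjoint relation gives $0 = \langle L_A u, u\rangle = \langle D_A u, D_A u\rangle = \|D_A u\|^2$, so $D_A u = 0$; that is, $\dot u^e + [A_0^e, u^e] = 0$ and $[u^e, A_1^e] = 0$ on every edge. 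The first equation is a linear ODE; since $u(v) = 0$ at some boundary vertex $v$ (here $\partial\Gamma \neq \emptyset$ is used, exactly as in Proposition \ref{xo75edv6}), uniqueness of solutions forces $u^e \equiv 0$ on every edge adjacent to $\partial\Gamma$, and by connectedness of $\Gamma$ and propagation of the matching conditions across interior vertices, $u \equiv 0$.

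Next I would establish \textbf{surjectivity}. Given $(f, c) \in C^0(I,\g) \times \g^{\Gamma\sint}$, I want to solve $L_A u = (f, c)$. The natural route is the variational/Lax--Milgram approach: the first component of $L_A$ is, up to the boundary terms, the operator $u \mapsto -\nabla_{A_0}^2 u - \ad_{A_1}^2 u$, which is the Euler--Lagrange operator of the quadratic form $Q(u) \coloneqq \int_I (\|\nabla_{A_0} u\|^2 + \|[A_1,u]\|^2)\,dt = \|D_A u\|^2$ on the Hilbert space $H \coloneqq \{u \in H^1(I,\g) : u \text{ matches at vertices}, u|_{\partial\Gamma} = 0\}$. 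The key point is coercivity of $Q$ on $H$: one has a Poincaré-type inequality $\|u\|_{H^1}^2 \leq C\,(\|\nabla_{A_0} u\|^2 + \|u\|_{L^2}^2)$ since $A_0 \in C^0$ is bounded, and then the vanishing $u|_{\partial\Gamma} = 0$ together with connectedness upgrades this to $\|u\|_{H^1}^2 \leq C' \|\nabla_{A_0} u\|^2 \leq C' Q(u)$ (if $Q(u) = 0$ then $D_A u = 0$, hence $u = 0$ by the injectivity argument, and a standard compactness/contradiction argument gives the uniform constant). By Lax--Milgram there is a unique weak solution $u \in H$ to $Q(u, \cdot) = \langle (f,c), \cdot\rangle$, and unwinding the weak formulation shows that $u$ satisfies $-\nabla_{A_0}^2 u - \ad_{A_1}^2 u = f$ in the distributional sense on each edge, together with the Kirchhoff-type jump condition $\sum_{s^{-1}(v)} \dot u^e(0) - \sum_{t^{-1}(v)} \dot u^e(1) = c(v)$ at interior vertices — these jump conditions are precisely the natural boundary conditions of the variational problem. \textbf{Elliptic regularity on the interval} is then immediate: since $\ddot u^e = -2[A_0^e,\dot u^e] - [\dot A_0^e, u^e] - [A_0^e,[A_0^e,u^e]] - [A_1^e,[A_1^e,u^e]] - f^e$ and the right side is continuous once $u^e \in C^1$, a bootstrap from $H^1 \hookrightarrow C^0$ gives $u^e \in C^2$, so $u \in C^2_0(I,\g)$ and $L_A u = (f,c)$.

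Having shown $L_A$ is a continuous bijection between Banach spaces, the \textbf{open mapping theorem} gives that it is an isomorphism. Finally, for the direct sum decomposition: $\ker D_A^*$ is closed (as the kernel of a bounded operator), and $\im D_A = \im(D_A L_A^{-1} D_A^*)$ — indeed, for any $Y \in T_A\A(\Gamma)$, set $u \coloneqq L_A^{-1} D_A^* Y \in C^2_0(I,\g)$; then $P \coloneqq D_A u$ satisfies $D_A^* P = L_A u = D_A^* Y$, so $Y - P \in \ker D_A^*$ and $Y = P + (Y-P)$ exhibits the decomposition. The sum is direct because $\im D_A \cap \ker D_A^* = 0$: if $P = D_A u$ and $D_A^* P = 0$ then $\|D_A u\|^2 = \langle L_A u, u\rangle = \langle D_A^* P, u\rangle = 0$. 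The projection $Y \mapsto D_A L_A^{-1} D_A^* Y$ is bounded, so the direct sum is topological.

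\textbf{Main obstacle.} The crux is the coercivity estimate for $Q$ on $H$ with a constant uniform enough to invoke Lax--Milgram — equivalently, a Poincaré inequality on the metric graph with Dirichlet conditions only at the (non-empty) boundary. Establishing this cleanly, and then carefully matching the weak formulation's natural boundary conditions to the stated jump term $(\sum \dot u^e(0) - \sum \dot u^e(1))_{v}$ and carrying out the regularity bootstrap to land in $C^2$ rather than merely $H^2$, is where the real work lies; the functional-analytic packaging (open mapping theorem, the projection formula) is then routine.
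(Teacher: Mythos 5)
Your proposal is correct, but it reaches the conclusion by a genuinely different route than the paper. The paper shares your injectivity step verbatim (Lemma \ref{fkr5s377}), but handles surjectivity by writing $L_A = L_0 + K$, where $L_0$ is the flat model ($A=0$): $L_0$ is inverted \emph{explicitly} by solving $-\ddot w_e = v_e$ on each edge and then a finite-dimensional linear system on $\g^{\Gamma\sint}$ for the affine corrections (Lemma \ref{w8g2jo3c}), while $K$ is compact because it factors through the Arzel\`a--Ascoli embedding $C^2_0 \hookrightarrow C^1$; the Fredholm alternative plus injectivity then makes $L_A = L_0(I+T)$ invertible. This keeps the whole argument inside the $C^k$ framework with no Sobolev machinery. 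Your variational route --- Lax--Milgram for $Q(u,v)=\langle D_Au, D_Av\rangle$ on an $H^1$ space over the metric graph, coercivity via a Poincar\'e inequality (which does hold, by exactly the compactness/contradiction argument you sketch, using the Dirichlet condition at the non-empty boundary and connectedness), a regularity bootstrap back to $C^2$, and the open mapping theorem --- avoids the model operator and the Fredholm alternative at the cost of setting up the Sobolev framework; it is a legitimate and roughly comparable amount of work. One caveat on your claim that the natural boundary conditions are ``precisely'' the displayed jump term $\bigl(\sum_{e\in s^{-1}(v)}\dot u^e(0)-\sum_{e\in t^{-1}(v)}\dot u^e(1)\bigr)_v$: integrating $Q$ by parts produces vertex terms in $\nabla_{A_0}u=\dot u+[A_0,u]$, not in $\dot u$ alone (indeed the displayed coordinate formula for $D_A^*D_A$ in the proposition appears to drop the resulting $[u(v),\cdot\,]$ contributions). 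This is harmless for your argument provided you identify the weak problem with $D_A^*D_Au=(f,c)$ via the adjoint identity $\langle D_Au,D_Av\rangle=\langle D_A^*D_Au,v\rangle$ --- which is what the proposition actually asserts --- rather than with the displayed coordinates. Your closing projection argument for the topological splitting $T_A\A(\Gamma)=\im D_A\oplus\ker D_A^*$ is the standard one and matches what the paper implicitly uses.
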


We establish this through a sequence of lemmas.

\begin{Lem}\label{ia3kcr17}
The operator $D_A^*$ is surjective, and
\[
\dim \ker D_A^* = 2(|E| - |\Gamma\sint|)\dim \g.
\]
\end{Lem}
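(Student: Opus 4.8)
The plan is to deduce both statements from the structural result Lemma \ref{qanw51t3}, applied with a suitable choice of the bundle endomorphism $B$ and the underlying vector space $W$. First I would observe that the operator $D_A^*$ has exactly the shape of the operator $R$ in Lemma \ref{qanw51t3}, except that $D_A^*$ sends $C^1(I,\g)$-data to itself and, crucially, is only defined on the subspace $T_A\A(\Gamma)\subset\B(\Gamma)$ rather than on all of $C^1(I,\g\times\g)$. To handle this, I would first establish that the assignment $Y\mapsto Y_0$ identifies $T_A\A(\Gamma)$ with $C^1(I,\g)$: given any $Y_0\in C^1(I,\g)$, the Lax linearization $\dot Y_1+[Y_0,A_1]+[A_0,Y_1]=0$ is a linear first-order ODE for $Y_1$ on each edge, and one must check that the Kirchhoff matching for $Y_1$ at interior vertices holds automatically, or determines the remaining freedom — more precisely, the $Y_0$-component is essentially free (subject to its own constraints) while $Y_1$ is then forced. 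I expect a cleaner route: note that $D_A^* Y$ depends only on $(Y_0, Y_1)$ through the same kind of first-order operator plus vertex pairings, so one applies Lemma \ref{qanw51t3} directly to the pair $(Y_0,Y_1)\in C^1(I,\g\times\g) = C^1(I, W)$ with $W=\g\times\g$ and $B$ the endomorphism encoding $Y\mapsto([A_0,Y_0]+[A_1,Y_1], 0)$ — but then the vertex constraints in $D_A^*$ only involve $Y_0$, not $Y_1$, so this does not literally match $R$ either.

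The cleanest approach, which I would carry out, is to treat $D_A^*$ as a composite. Write $D_A^* = R\circ\iota$ where $\iota: T_A\A(\Gamma)\to C^1(I,\g)$, $Y\mapsto Y_0$, and $R$ is the operator of Lemma \ref{qanw51t3} with $W=\g$ and $B = \ad_{A_0}$ (so that $R(Y_0) = (\dot Y_0 + [A_0, Y_0], (\text{vertex sums of }Y_0))$), composed with the correction term $Y\mapsto([A_1,Y_1],0)$. Since this correction is a bounded operator and, once restricted to $T_A\A(\Gamma)$, the value $Y_1$ is determined by $Y_0$ (as a continuous linear function of $Y_0$, by solving the Lax linearization with prescribed initial data — this is where I must invoke uniqueness of ODE solutions and a dimension/spanning-tree bookkeeping as in the proof of Lemma \ref{qanw51t3}), the operator $D_A^*$ factors through a bounded isomorphism $T_A\A(\Gamma)\cong C^1(I,\g)$ followed by a surjective Fredholm-type operator. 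Concretely: first show $\iota$ is a topological isomorphism $T_A\A(\Gamma)\xrightarrow{\ \sim\ } C^1(I,\g)$; then show that under this identification $D_A^*$ becomes $Y_0\mapsto(\dot Y_0+[A_0,Y_0]+[A_1,Y_1(Y_0)], (\text{vertex sums of }Y_0))$, which has the form "$R$ plus a bounded operator not affecting the highest-order part"; then apply Lemma \ref{qanw51t3} to conclude surjectivity (the lower-order perturbation does not affect surjectivity because one can still solve edge-by-edge along a spanning tree: the perturbation is absorbed into the choice of $x_e$ on each edge exactly as the inhomogeneous term $a_e$ was).

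For the kernel dimension: $\dim\ker D_A^* = \dim\ker(\iota) + \dim\ker(R\text{-type operator on }C^1(I,\g))$. Since $\iota$ is an isomorphism, $\dim\ker\iota = 0$, and the $R$-type operator on $C^1(I,\g)=C^1(I,W)$ with $W=\g$ has kernel of dimension $(|E|-|\Gamma\sint|)\dim\g$ by Lemma \ref{qanw51t3}. But wait — this gives $(|E|-|\Gamma\sint|)\dim\g$, not $2(|E|-|\Gamma\sint|)\dim\g$. The factor of $2$ must come from the fact that $\iota$ is \emph{not} an isomorphism: rather, $T_A\A(\Gamma)$ has "twice" the data, since a tangent vector $Y$ carries both $Y_0$ and $Y_1$, and the Lax linearization constrains only $Y_1$ in terms of $(Y_0,Y_1(0))$ on each edge while the Kirchhoff condition on $Y_1$ cuts down the initial data. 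So the correct bookkeeping is: $Y_0$ ranges over all of $C^1(I,\g)$ (no constraint from the defining equations of $\A$, since $\dot Y_1 + \dots = 0$ constrains $\dot Y_1$, and the $Y_1$-Kirchhoff condition is a constraint on $Y_1$ alone), while $Y_1$ is a solution of a first-order system with a Kirchhoff matching — i.e. $Y_1$ ranges over a space isomorphic (by Lemma \ref{qanw51t3} applied with $B=\ad_{A_0}$ after noting $[Y_0,A_1]$ is a known inhomogeneity) to $C^1(I,\g)\times\big(\text{kernel of }R\text{-type op}\big)/\dots$ — I would instead directly parametrize $T_A\A(\Gamma)$ by (a) $Y_0\in C^1(I,\g)$ arbitrary, (b) the $Y_1$-initial data modulo Kirchhoff, which by the spanning-tree argument of Lemma \ref{qanw51t3} has dimension $(|E|-|\Gamma\sint|)\dim\g$; so $\ker D_A^*$ consists of those $Y$ with $D_A^* Y = 0$, i.e. $Y_0\in\ker(R\text{-type})$, which has dimension $(|E|-|\Gamma\sint|)\dim\g$, \emph{times} the free $(|E|-|\Gamma\sint|)\dim\g$-dimensional choice of $Y_1$-data — no, that's not a sum either. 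Let me restate the correct count cleanly for the writeup: parametrize $Y\in T_A\A(\Gamma)$ bijectively and continuously by the pair $(Y_0, \text{reduced initial data of }Y_1)$; then $D_A^*Y=0$ imposes conditions \emph{only on }$Y_0$, namely $Y_0\in\ker(R)$ of dimension $(|E|-|\Gamma\sint|)\dim\g$, while leaving the $Y_1$-data completely free, of dimension $(|E|-|\Gamma\sint|)\dim\g$; hence $\dim\ker D_A^* = 2(|E|-|\Gamma\sint|)\dim\g$, and surjectivity of $D_A^*$ follows since its $Y_1$-dependence is irrelevant and its $Y_0$-dependence is surjective by Lemma \ref{qanw51t3}.

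The main obstacle I anticipate is making rigorous the claim that $T_A\A(\Gamma)$ is topologically isomorphic to $C^1(I,\g)\times\big(\text{reduced }Y_1\text{-initial data}\big)$ — i.e. that the map $Y\mapsto(Y_0,\text{endpoint values of }Y_1\text{ at the non-tree/boundary edges})$ is a Banach-space isomorphism. This requires (i) continuous dependence of the full $C^1$ solution $Y_1$ on $(Y_0, \text{initial data})$, which is standard ODE theory with parameters but must be stated carefully in the $C^1$-topology (differentiating the ODE once to control $\dot Y_1$, as in Lemma \ref{0b45jf95}), and (ii) the combinatorial verification, exactly mirroring the spanning-tree induction in the proof of Lemma \ref{qanw51t3}, that the Kirchhoff law on $Y_1$ at interior vertices is equivalent to fixing the reduced initial data — the subtlety being that the Kirchhoff condition couples edge-endpoint values of $Y_1$ across a vertex, and one must check, using $\deg(v)>1$ at interior vertices and $\deg(v)=1$ at boundary vertices, that solving leaf-to-root determines everything from the non-tree and boundary data with no leftover constraints, so that the open mapping theorem applies. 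Once this identification is in place, both assertions of the lemma are immediate from Lemma \ref{qanw51t3}.
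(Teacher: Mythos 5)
Your final argument rests on the claim that, after parametrizing $T_A\A(\Gamma)$ by the pair $(Y_0,\ \text{reduced initial data of } Y_1)$, the condition $D_A^*Y=0$ ``imposes conditions only on $Y_0$'' and that the ``$Y_1$-dependence is irrelevant''. This is false: the first component of $D_A^*Y$ is $\dot Y_0+[A_0,Y_0]+[A_1,Y_1]$, which genuinely couples $Y_0$ to $Y_1$ whenever $A_1\neq 0$; moreover, on $T_A\A(\Gamma)$ the function $Y_1$ is itself coupled back to $Y_0$ through the linearized Lax equation $\dot Y_1+[A_0,Y_1]+[Y_0,A_1]=0$. So $\ker D_A^*$ is the solution space of a genuinely coupled first-order system, not a product of $\ker(R_{W=\g})$ with a free space of $Y_1$-data, and neither the surjectivity claim nor the dimension count is established by your decoupling (the numerical answer happens to be right, but the argument for it is not). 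A repair along your lines would require showing that $R_{W=\g}$ plus the nonlocal lower-order term $Y_0\mapsto[A_1,Y_1(Y_0,c)]$ is still surjective with kernel of the same dimension, i.e.\ a Fredholm-perturbation argument you do not supply.

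The fix is exactly the route you considered and then discarded in your second paragraph. The observation that ``the vertex constraints in $D_A^*$ only involve $Y_0$, not $Y_1$'' is not an obstruction: the missing vertex constraint on $Y_1$ is precisely the Kirchhoff condition defining $\K(\Gamma)\supset T_A\A(\Gamma)$, and the missing ODE for $Y_1$ is precisely the linearized Lax equation defining $T_A\A(\Gamma)$. Bundling the two components of $D_A^*$ together with the two defining equations of its domain gives exactly the operator $R$ of Lemma \ref{qanw51t3} with $W=\g\times\g$ and $B$ the block operator $(Y_0,Y_1)\mapsto\bigl([A_0,Y_0]+[A_1,Y_1],\,[A_0,Y_1]-[A_1,Y_0]\bigr)$. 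Then $\ker D_A^*=\ker R$, which has dimension $(|E|-|\Gamma\sint|)\dim(\g\times\g)=2(|E|-|\Gamma\sint|)\dim\g$, and surjectivity of $D_A^*$ follows by solving $R(Y)=((a,0),(b,0))$. This is the paper's (one-line) proof, and it also spares you the Banach-isomorphism and open-mapping arguments you anticipate at the end, which are unnecessary for this lemma.
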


\begin{proof}
This follows from Lemma \ref{qanw51t3} with $W = \g \times \g$.
\end{proof}

\begin{Lem}\label{fkr5s377}
The operator \eqref{jnpsk711} is injective.
\end{Lem}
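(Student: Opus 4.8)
**Plan for proving Lemma \ref{fkr5s377} (injectivity of $L_A = D_A^*D_A$).**

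The plan is to show that $L_A u = 0$ forces $u = 0$ by a standard energy/integration-by-parts argument, exploiting that the pairing \eqref{v5stgm73} is non-degenerate and that the inner product on $T_A\A(\Gamma)$ is positive definite. First I would observe that for $u \in C^2_0(I, \g)$ the pairing identity $\langle D_A u, D_A u\rangle = \langle u, D_A^* D_A u\rangle = \langle u, L_A u\rangle$ holds, since $D_A u \in T_A\A(\Gamma)$ and the adjoint relation of the previous proposition applies. Hence if $L_A u = 0$, then $\langle D_A u, D_A u\rangle = 0$, and positive-definiteness of the $L^2$-type inner product on $T_A\A(\Gamma)$ gives $D_A u = 0$, i.e.\ $\dot u = [u, A_0]$ and $[u, A_1] = 0$ on every edge.

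Next I would run an argument parallel to the proof of freeness of the action (Proposition \ref{xo75edv6}): the equation $\dot u = [u, A_0] = -\ad_{A_0} u$ is a linear ODE on each edge $I_e$, and since $\partial\Gamma \neq \emptyset$ and $\Gamma$ is connected, I can propagate vanishing from the boundary. Concretely, pick an edge $e$ with an endpoint $v \in \partial\Gamma$; since $u \in C^2_0(I,\g)$ we have $u(v) = 0$, so $u_e$ solves the linear ODE $\dot u_e = -\ad_{A_0^e} u_e$ with zero initial (or terminal) condition, forcing $u_e \equiv 0$ by uniqueness. Then at any vertex $w$ reached from $v$ along $e$, the matching condition in the definition of $C^2_0(I,\g)$ gives $u(w) = 0$, and we may repeat on every edge incident to $w$. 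By connectedness this yields $u \equiv 0$ on all of $I$, proving injectivity.

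I do not expect a serious obstacle here; the $\Ad$-invariance of the inner product (giving $\langle [u,A_1],u\rangle$-type terms the right sign and making $\ad_{A_1}$ skew) and the explicit form of $D_A^*$ are exactly what is needed, and the boundary-propagation step is essentially identical to the freeness argument already carried out. The one point requiring a little care is making sure the computation $\langle D_A u, D_A u\rangle = \langle u, L_A u\rangle$ is legitimate with the stated pairings — i.e.\ that the boundary terms from integration by parts land precisely in the $\g^{\Gamma\sint}$-component of $D_A^* D_A$ as displayed in \eqref{jnpsk711}, which is already verified by the preceding proposition. Given that, injectivity is immediate from the two displayed conditions $\dot u = [u,A_0]$, $[u,A_1]=0$ together with the vanishing of $u$ on $\partial\Gamma$; in fact the second condition $[u,A_1]=0$ is not even needed, only the first.
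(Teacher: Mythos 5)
Your proposal is correct and follows essentially the same route as the paper: the identity $\langle u, L_A u\rangle = \|D_A u\|^2$ forces $D_A u = 0$, and then the linear ODE $\dot u_e = [u_e, A_0^e]$ together with $u|_{\partial\Gamma} = 0$ and connectedness propagates vanishing exactly as in the freeness argument of Proposition \ref{xo75edv6}. Your closing observation that the condition $[u, A_1] = 0$ is not needed matches the paper, which likewise uses only the first component of $D_A u = 0$.
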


\begin{proof}
If $u \in C^2_0(I, \g)$ satisfies $D_A^*D_Au = 0$, then $\langle D_A^*D_Au, u \rangle = \|D_Au\|^2 = 0$, so $D_Au = 0$. 
In particular, $u$ satisfies $\dot{u}_e = [u_e, A_0^e]$ on each edge $e \in E$. 
Because $\partial\Gamma \neq \emptyset$ and $u(v) = 0$ for all $v \in \partial\Gamma$, the connectedness of $\Gamma$ implies $u \equiv 0$, exactly as in the proof of Proposition \ref{xo75edv6}.
\end{proof}

\begin{Lem}\label{w8g2jo3c}
Setting $A=0$, the operator
\begin{align*}
L_0 : C^2_0(I,\g) &\too C^0(I,\g)\times \g^{\Gamma\sint}\\
u &\mtoo 
\left(
  -\ddot{u},\;
  \left(\sum_{e\in s^{-1}(v)}\dot{u}^e(0)-\sum_{e\in t^{-1}(v)}\dot{u}^e(1)\right)_{v\in\Gamma\sint}
\right)
\end{align*}
is an isomorphism of Banach spaces.
\end{Lem}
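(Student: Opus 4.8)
The plan is to show that $L_0$ is a continuous linear bijection between Banach spaces and then invoke the bounded inverse theorem. Continuity is immediate: $u \mapsto -\ddot u$ is bounded from $C^2(I,\g)$ to $C^0(I,\g)$, the maps $u \mapsto \dot u^e(0)$ and $u \mapsto \dot u^e(1)$ are bounded, and $C^2_0(I,\g)$ is a closed subspace of $C^2(I,\g)^E$ (the matching relations at vertices and the vanishing on $\partial\Gamma$ are closed conditions), hence a Banach space. So everything reduces to proving that $L_0$ is bijective, which I will do by reducing it to a finite-dimensional linear system governed by the combinatorial Dirichlet Laplacian of $\Gamma$.

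Fix a target $(f,z) \in C^0(I,\g) \times \g^{\Gamma\sint}$ and, on each edge, write the general $C^2$ solution of $-\ddot u^e = f^e$ as $u^e(t) = c_e + p_e t - F^e(t)$, where $F^e(t) \coloneqq \int_0^t\!\!\int_0^s f^e(r)\,dr\,ds$ and $c_e, p_e \in \g$; then $\dot u^e(0) = p_e$ and $\dot u^e(1) = p_e - \gamma_e$ with $\gamma_e \coloneqq \int_0^1 f^e$. Imposing $u \in C^2_0(I,\g)$ amounts to the existence of $\phi \colon V \to \g$ with $\phi|_{\partial\Gamma} = 0$ (so effectively $\phi \in \g^{\Gamma\sint}$) such that $c_e = \phi(s(e))$ and $\phi(t(e)) - \phi(s(e)) = p_e - F^e(1)$ for every edge $e$, while the equation ``second component of $L_0 u$ equals $z$'' reads $\sum_{e \in s^{-1}(v)} p_e - \sum_{e \in t^{-1}(v)} p_e = z_v - \sum_{e \in t^{-1}(v)}\gamma_e$ for $v \in \Gamma\sint$. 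These are $(|E| + |\Gamma\sint|)\dim\g$ linear equations in the unknowns $(p,\phi) \in \g^E \times \g^{\Gamma\sint}$. Eliminating $p_e = \phi(t(e)) - \phi(s(e)) + F^e(1)$ from the flux equations turns them into $\sum_{w} m_{vw}\bigl(\phi(v) - \phi(w)\bigr) = (\text{explicit data depending on }f,z)$ for all $v \in \Gamma\sint$, where $m_{vw}$ is the number of edges joining $v$ and $w$ (loops cancel out); this is exactly the Dirichlet graph Laplacian of $\Gamma$ applied to $\phi$ with zero boundary data. Pairing this operator with $\phi$ and summing by parts gives $\sum_{e \in E}\|\phi(t(e)) - \phi(s(e))\|^2$, so it is positive definite — hence invertible — precisely because $\Gamma$ is connected with $\partial\Gamma \neq \emptyset$. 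Thus the system has a unique solution $(p,\phi)$, and reassembling $u^e(t) = \phi(s(e)) + p_e t - F^e(t) \in C^2_0(I,\g)$ yields the unique preimage of $(f,z)$ under $L_0$.

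It follows that $L_0$ is a bounded bijection of Banach spaces, hence an isomorphism. The one step that needs genuine care — and the main, if modest, obstacle — is the invertibility of the reduced system: one must identify it with the combinatorial Dirichlet Laplacian, verify that parallel edges contribute with multiplicity while loops do not affect the matrix at all, and observe that the energy computation delivers positivity exactly under the standing hypotheses (connectedness and nonempty boundary). Everything else is routine ODE bookkeeping; note in passing that the same computation re-proves injectivity of $L_0$ directly, consistent with the clean identity $\langle L_0 u, u\rangle = \|\dot u\|_{L^2}^2$ coming from $L_0 = D_0^*D_0$.
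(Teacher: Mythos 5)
Your proof is correct, and its skeleton matches the paper's: both arguments observe that boundedness is clear, solve the edge ODEs explicitly up to an affine ambiguity $a_e + b_e t$ (your $c_e + p_e t$), and reduce everything to the invertibility of a square finite-dimensional linear system on $\g^{\Gamma\sint}$ coming from the vertex-matching and flux conditions. Where you genuinely diverge is in how that reduced system is shown to be invertible. The paper's proof deduces injectivity of the reduced operator $M$ from the injectivity of $L_0$ itself, which was established earlier (Lemma \ref{fkr5s377}) via the identity $\langle L_0u,u\rangle=\|D_0u\|^2$ together with a propagation-from-the-boundary argument; it never identifies $M$ explicitly. You instead eliminate the slopes $p_e$ and recognize the reduced operator as the combinatorial Dirichlet Laplacian of $\Gamma$ (with loops dropping out and parallel edges counted with multiplicity), then prove positive definiteness by the discrete energy identity $\sum_{e\in E}\|\phi(t(e))-\phi(s(e))\|^2$, which vanishes only for $\phi\equiv 0$ by connectedness and $\partial\Gamma\neq\emptyset$. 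Your route is more self-contained and makes the graph-theoretic content of the lemma explicit, at the cost of some extra bookkeeping; the paper's route is shorter because it recycles an injectivity statement it needs anyway for the general operator $L_A$. Both are complete; your energy computation correctly uses exactly the standing hypotheses, and your appeal to the bounded inverse theorem at the end is the same (implicit) step the paper takes when it says boundedness plus bijectivity suffice.
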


\begin{proof}
The map is bounded, so it suffices to prove bijectivity. 
Injectivity has already been shown in Lemma \ref{fkr5s377}.
For surjectivity, let $(v,y)\in C^0(I,\g)\times \g^{\Gamma\sint}$. 
For each edge $e\in E$, pick $w_e$ solving $-\ddot{w}_e=v_e$ with $w_e(0)=w_e(1)=0$.
Any solution to $L_0(u)=v$ then has the form
\[
u_e(t)=w_e(t)+a_e+b_e t,\qquad a_e,b_e\in\g.
\]
It remains to choose $(a_e,b_e)_{e\in E}$ so that
\begin{enumerate}
\item the maps $(u_e)_{e \in E}$ define an element of $C^2_0(I, \g)$, i.e.\ there exist values $(c_v)_{v \in \Gamma\sint} \in \g^{\Gamma\sint}$ such that
\begin{itemize}
    \item $u_e(0) = 0$ if $s(e) \in \partial\Gamma$,
    \item $u_e(1) = 0$ if $t(e) \in \partial\Gamma$,
    \item $u_e(0) = c_{s(e)}$ if $s(e) \in \Gamma\sint$,
    \item $u_e(1) = c_{t(e)}$ if $t(e) \in \Gamma\sint$,
\end{itemize}
and
\item $\sum_{e\in s^{-1}(v)}\dot{u}_e(0)-\sum_{e\in t^{-1}(v)}\dot{u}_e(1)=y_v$ for all $v\in\Gamma\sint$.
\end{enumerate}
These are linear algebraic conditions which reduce to
\begin{align}
a_e&=0 &&\text{for all } e\in s^{-1}(\partial\Gamma), \label{c91q6wa7}\\
a_e+b_e&=0 &&\text{for all } e\in t^{-1}(\partial\Gamma), \\
a_e &= c_{s(e)} &&\text{for all } e \in s^{-1}(\Gamma\sint), \label{y08iimjv}\\
a_e+b_e&= c_{t(e)} &&\text{for all } e \in t^{-1}(\Gamma\sint), \label{792h1y84}\\
\sum_{e\in s^{-1}(v)} b_e-\sum_{e\in t^{-1}(v)} b_e&=z_v 
&&\text{for } v\in\Gamma\sint, \label{wgpmyttd}
\end{align}
where $z_v \coloneqq y_v - \sum_{e\in s^{-1}(v)}\dot{w}_e(0) + \sum_{e\in t^{-1}(v)}\dot{w}_e(1)\in\g$ is fixed. From \eqref{c91q6wa7}--\eqref{792h1y84}, $a_e$ and $b_e$ are completely determined by $c \in \g^{\Gamma\sint}$. Then \eqref{wgpmyttd} becomes an equation of the form
\begin{equation}\label{spqm09u4}
    M(c) = z,
\end{equation}
for some linear operator $M : \g^{\Gamma\sint} \to \g^{\Gamma\sint}$.
%Explicitly,
%\begin{equation}
%M(c) = \left(-\deg(v) c_v
%+ \sum_{\substack{e:\,t(e)=v\\ s(e)\in\Gamma\sint}} c_{s(e)}
%+ \sum_{\substack{e:\,s(e)=v\\ t(e)\in\Gamma\sint}} c_{t(e)}
%\right)_{v\in\Gamma\sint}.
%\end{equation}
It then suffices to show that \eqref{spqm09u4} admits a solution $c \in \g^{\Gamma\sint}$ for any $z \in \g^{\Gamma\sint}$.
We claim that $M$ is injective.
Indeed, if $M(c)=0$, take $w_e\equiv 0$ and form $u_e(t)=a_e+b_e t$ from $c$ as above. 
Then $L_0(u)=0$, and by injectivity of $L_0$ we get $u\equiv 0$, hence $c=0$. 
Therefore $M$ is injective, hence bijective, so there is a unique $c$ solving \eqref{spqm09u4}. 
This produces $(a_e,b_e)$ and thus $u$ with $L_0(u)=v$.
\end{proof}

\begin{proof}[Proof of Proposition \ref{derirtj5}]
Write $L_A=L_0+K$, where $L_0$ is as in Lemma \ref{w8g2jo3c} and
\[
K(u)=\bigl(-2\ad_{A_0}\dot{u}-\ad_{\dot{A}_0}u-\ad_{A_0}^2u-\ad_{A_1}^2u,\;0\bigr).
\]
The map $K$ is compact, since it factors as
\[
\begin{tikzcd}
C^2_0(I,\g) \arrow[hook]{r} & C^1(I,\g) \arrow{r} & C^0(I,\g)\times \g^{\Gamma\sint},
\end{tikzcd}
\]
where the first map is compact by Arzelà--Ascoli (see, e.g., \cite[Thm.\ 1.34]{AdamsFournier}) and the second map is bounded.

Set $T\coloneqq L_0^{-1}K$, which is a compact operator on $C^2_0(I,\g)$. 
Then $L_A=L_0(I+T)$. 
By the Fredholm alternative, either $I+T$ is invertible or there exists $u \ne 0$ with $(I+T)u=0$. 
In the latter case, $L_Au=0$, which contradicts Lemma \ref{fkr5s377} (injectivity of $L_A$). 
Hence $I+T$ is invertible, and therefore $L_A$ is an isomorphism.
\end{proof}

With Proposition \ref{derirtj5} in hand, we construct a local slice for the $\G_0(\Gamma)$-action at a fixed $A\in\A(\Gamma)$. Consider the gauge-fixing map
\[
F : \G_0(\Gamma)\times \A(\Gamma)\longrightarrow C^0(I,\g)\times \g^{\Gamma\sint},
\qquad
F(g,B)=D_A^*(g\cdot B-A).
\]
We have $F(1,A)=0$ and, for $u\in T_1\G_0(\Gamma)=C^2_0(I,\g)$,
\[
dF_{(1,A)}(u,0)=D_A^*D_Au.
\]
By Proposition \ref{derirtj5}, $D_A^*D_A$ is an isomorphism. Hence, by the implicit function theorem, there exist neighbourhoods $U\subset\A(\Gamma)$ of $A$ and $V\subset\G_0(\Gamma)$ of $1$ and a unique smooth map $s: U\to V$ such that $F(s(B),B)=0$ for all $B\in U$.
Define the candidate slice
\[
S\coloneqq s^{-1}(1)
=\{B\in U:F(1,B)=0\}
=\{A+a\in U: D_A^*a=0\}.
\]

\begin{Lem}
After shrinking $U$ if necessary, $S$ is a Banach submanifold of $\A(\Gamma)$.
\end{Lem}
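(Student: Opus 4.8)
The plan is to realize $S$, after shrinking $U$, as the zero set of a Banach submersion and then invoke the regular value theorem. First I would observe that the formula defining $D_A^*$---which involves only $\dot{Y}_0$, $Y_0$, $Y_1$ and the point-evaluations $Y_0^e(0), Y_0^e(1)$---makes sense on all of $\B(\Gamma) = C^1(I,\g\times\g)$, not merely on the subspace $T_A\A(\Gamma)$, and defines a bounded operator $\widehat{D_A^*} : \B(\Gamma) \to C^0(I,\g)\times\g^{\Gamma\sint}$ (boundedness is clear since $A_0, A_1$ have finite sup-norm and the evaluations are continuous on $C^1$). With this extension at hand, $S = \{B \in U : \widehat{D_A^*}(B-A) = 0\} = \Theta^{-1}(0)$, where $\Theta : U \to C^0(I,\g)\times\g^{\Gamma\sint}$, $\Theta(B) = \widehat{D_A^*}(B-A)$, is smooth---being the composite of the submanifold inclusion $\A(\Gamma)\hookrightarrow\B(\Gamma)$, translation by $-A$, and the bounded linear map $\widehat{D_A^*}$---with differential $d\Theta_B = \widehat{D_A^*}|_{T_B\A(\Gamma)}$ at each $B \in U$.

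Next I would check the regular-value condition at the base point and then propagate it. At $B = A$ the differential $d\Theta_A = D_A^*|_{T_A\A(\Gamma)}$ is surjective by Lemma \ref{ia3kcr17} and has topologically complemented kernel $\ker D_A^*$, by the splitting $T_A\A(\Gamma) = \im D_A \oplus \ker D_A^*$ of Proposition \ref{derirtj5}. To spread this to a neighbourhood, I would use that surjectivity together with complementedness of the kernel is stable under small perturbations: if $T_0 : X \to Y$ is bounded and surjective and $C$ is a fixed closed complement of $\ker T_0$, then $T_0|_C : C \to Y$ is an isomorphism, hence so is $T|_C$ for every $T$ near $T_0$, so $T$ is surjective and $P := (T|_C)^{-1}\circ T$ is a bounded projection with image $C$ and kernel $\ker T$, giving $X = \ker T \oplus C$. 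To apply this I would pass to a chart $\phi : \mathcal{O} \to \A(\Gamma)$ near $A$ (with $\mathcal{O}\subset T_A\A(\Gamma)$ open, $\phi(0)=A$, $d\phi_0 = \mathrm{id}$); then $d(\Theta\circ\phi)_Y = \widehat{D_A^*}\circ d\phi_Y$ is a family of operators on the \emph{fixed} Banach space $T_A\A(\Gamma)$, continuous in $Y$ and equal to $D_A^*|_{T_A\A(\Gamma)}$ at $Y=0$, so it is surjective with complemented kernel (by the same complement $C$) for $Y$ in a small ball about $0$. Shrinking $U$ so that $\phi^{-1}(U)$ lies in that ball makes $0$ a regular value of $\Theta|_U$, and the regular value theorem for Banach manifolds (see \cite{Lang1999Fundamentals}) then gives that $S = \Theta^{-1}(0)$ is a Banach submanifold of $\A(\Gamma)$, with $T_A S = \ker D_A^*$.

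The step I expect to require genuine care---and hence the main obstacle---is the openness argument in the second paragraph: because the tangent spaces $T_B\A(\Gamma)$ vary with $B$, the operators $d\Theta_B$ do not a priori live on a common Banach space, so one must trivialize via a chart before invoking continuity, and one must verify that \emph{both} surjectivity and kernel-complementedness (not just surjectivity) survive the perturbation. Everything else is immediate once the bounded extension $\widehat{D_A^*}$ is set up and Proposition \ref{derirtj5} is used to supply the needed splitting at the base point.
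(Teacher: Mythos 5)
Your proof is correct, but it takes a genuinely different route from the paper's. The paper works with the map $s : U \to V$ produced by the implicit function theorem and shows that $S = s^{-1}(1)$ is a submanifold by proving that $ds_A$ is surjective onto $C^2_0(I,\g)$: from $D_A^*D_A\,ds_A(v) = -D_A^*v$, surjectivity of $D_A^*$ (Lemma \ref{ia3kcr17}) and invertibility of $D_A^*D_A$ (Proposition \ref{derirtj5}) let one hit any prescribed $u$. You instead bypass $s$ entirely and exhibit $S$ as the zero level set of the affine gauge-fixing functional $\Theta(B) = \widehat{D_A^*}(B-A)$, checking the regular value condition at $A$ via the splitting $T_A\A(\Gamma) = \im D_A \oplus \ker D_A^*$ and then propagating surjectivity and kernel-complementedness to a neighbourhood by a chart-plus-perturbation argument. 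Both approaches are sound and both yield $T_AS = \ker D_A^*$. What yours buys is explicitness about the point you rightly flag as delicate: the submersion condition must hold on a whole neighbourhood of $A$ in the zero set, not just at $A$, and this openness (which the paper leaves implicit in the phrase ``after shrinking $U$'') is exactly what your trivialization argument supplies; your use of the fixed closed complement $C = \im D_A$ to get both surjectivity and splitting of the perturbed operators is the standard and correct way to do this. What the paper's route buys is brevity --- the map $s$ is already in hand, and the computation of $ds_A$ is reused immediately afterwards to identify $\ker ds_A = \ker D_A^*$ --- at the cost of leaving unremarked that the kernel of $ds_A$ must also be split for $s^{-1}(1)$ to be a submanifold (automatic here, since $\ker D_A^*$ is finite-dimensional by Lemma \ref{ia3kcr17}).
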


\begin{proof}
It suffices to show that $ds_A: T_A\A(\Gamma)\to T_1\G_0(\Gamma)=C^2_0(I,\g)$ is surjective. For $v\in T_A\A(\Gamma)$, we have
\[
dF_{(1,A)}(ds_A(v),v)=0,
\]
hence
\begin{equation}\label{xgsveh0y}
D_A^*D_A ds_A(v)=-dF_{(1,A)}(0,v)=-D_A^*v.
\end{equation}
Since $D_A^*$ is surjective (Lemma \ref{ia3kcr17}), given any $u\in C^2_0(I,\g)$ we may choose $v$ with $D_A^*v=-D_A^*D_Au$. Applying \eqref{xgsveh0y} and inverting $D_A^*D_A$ (Proposition \ref{derirtj5}) yields $ds_A(v)=u$. Thus $ds_A$ is surjective, and $S$ is a Banach submanifold.
\end{proof}

Equation \eqref{xgsveh0y} also shows $\ker ds_A=\ker D_A^*$, hence
\begin{equation}\label{63cdb20b}
T_AS=\ker D_A^*.
\end{equation}
Consider the action map
\[
\Psi : \G_0(\Gamma)\times S \longrightarrow \A(\Gamma),\qquad \Psi(g,B)=g\cdot B.
\]
Its differential at $(1,A)$ is
\[
d\Psi_{(1,A)}: T_1\G_0(\Gamma)\times T_AS \longrightarrow T_A\A(\Gamma),
\qquad
(u,\xi)\longmapsto D_Au+\xi.
\]
By Proposition \ref{derirtj5} and \eqref{63cdb20b}, this is the (topological) direct sum decomposition $T_A\A(\Gamma)=\im D_A\oplus \ker D_A^*$.
Therefore, after shrinking $U$ and $V$ if necessary, $\Psi$ restricts to a diffeomorphism
\[
\Psi: V\times S \stackrel{\cong}{\longrightarrow} \A(\Gamma)
\]
onto a neighbourhood of $A$.

\begin{Lem}\label{e30z2nqr}
After possibly shrinking $U$, if $g\cdot S\cap S\neq\emptyset$ then $g=1$.
\end{Lem}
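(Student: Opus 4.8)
The statement we must prove is a standard ``local freeness of the slice'' fact: for the candidate slice $S$ constructed above, after possibly shrinking $U$, the relation $g \cdot S \cap S \neq \emptyset$ for $g \in \G_0(\Gamma)$ forces $g = 1$. The plan is to argue by contradiction using properness of the action together with the tube/diffeomorphism property $\Psi : V \times S \xrightarrow{\cong} \A(\Gamma)$ established just above.

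The argument proceeds as follows. Suppose no such neighbourhood exists. Then we may choose a sequence of neighbourhoods $U_n \ni A$ shrinking to $\{A\}$ (say a countable neighbourhood basis, which exists since $\A(\Gamma)$ is metrizable), with corresponding slices $S_n = s^{-1}(1)|_{U_n}$, and elements $g_n \in \G_0(\Gamma)$ with $g_n \neq 1$ and points $B_n, B_n' \in S_n$ satisfying $g_n \cdot B_n = B_n'$. Since $B_n, B_n' \in U_n$ and $U_n \to \{A\}$, both $B_n \to A$ and $B_n' \to A$ in $\A(\Gamma)$; in particular both sequences converge. By properness of the $\G_0(\Gamma)$-action (proved above), the sequence $g_n$ has a convergent subsequence, say $g_{n_k} \to g_\infty \in \G_0(\Gamma)$. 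Passing to the limit in $g_{n_k} \cdot B_{n_k} = B_{n_k}'$ and using continuity of the action gives $g_\infty \cdot A = A$, so by freeness of the action (Proposition~\ref{xo75edv6}) we get $g_\infty = 1$, i.e. $g_{n_k} \to 1$.

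Now we use the local diffeomorphism $\Psi : V \times S \xrightarrow{\cong} \A(\Gamma)$ onto a neighbourhood $\O$ of $A$, where we fix $S$ to be the slice associated to the original (un-shrunk) $U$ and $V$ a neighbourhood of $1$ in $\G_0(\Gamma)$. For $k$ large, $g_{n_k} \in V$ (since $g_{n_k} \to 1$), the points $B_{n_k}, B_{n_k}'$ lie in $\O$ (since they converge to $A$), and moreover $B_{n_k} \in S$, $B_{n_k}' \in S$ provided we have taken $U_n \subset U$ for all $n$, so that $S_n \subset S$. Then $g_{n_k} \cdot B_{n_k} = B_{n_k}'$ reads $\Psi(g_{n_k}, B_{n_k}) = \Psi(1, B_{n_k}')$, and injectivity of $\Psi$ on $V \times S$ forces $g_{n_k} = 1$, contradicting $g_n \neq 1$. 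This completes the proof; concretely, the shrunk $U$ is any $U_N$ for which $U_N \subset U$ and the tube coordinates apply, and the statement holds for that choice.

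The main obstacle is purely bookkeeping: one must be careful that the slice $S$ appearing in the conclusion is a single fixed submanifold (so that ``$g \cdot S \cap S$'' makes sense), and that shrinking $U$ shrinks $S$ correspondingly as a \emph{subset} of the original slice, rather than producing an unrelated submanifold. This is automatic from the construction, since $s : U \to V$ is obtained from the implicit function theorem and its restriction to a smaller $U' \subset U$ is just the restriction of $s$, whence $s^{-1}(1)|_{U'} = S \cap U'$. Once this compatibility is recorded, the proof is the routine properness-plus-tube argument sketched above; no further analytic input beyond Proposition~\ref{xo75edv6}, the properness proposition, and the diffeomorphism property of $\Psi$ is needed.
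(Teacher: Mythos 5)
Your proposal is correct and follows essentially the same route as the paper: both rest on properness (to force $g$ into $V$ when $g$ moves a point of a small neighbourhood of $A$ back into that neighbourhood), freeness at $A$, and injectivity of $\Psi$ on $V\times S$. The only difference is presentational — the paper invokes in one line the standard consequence of properness that $U$ can be shrunk so that $g\cdot U\cap U\neq\emptyset$ implies $g\in V$, whereas you unpack that step into the explicit sequential contradiction argument; your bookkeeping remark that $S\cap U'=s^{-1}(1)|_{U'}$ under shrinking is accurate and harmless.
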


\begin{proof}
By properness of the $\G_0(\Gamma)$-action, we may shrink $U$ so that $g\cdot U\cap U\neq\emptyset$ implies $g\in V$. If $g\cdot S\cap S\neq\emptyset$, pick $B\in S$ with $g\cdot B\in S$. Since $\Psi$ is injective on $V\times S$, we must have $g=1$.
\end{proof}

By Lemma \ref{e30z2nqr}, $\Psi$ is a diffeomorphism from $\G_0(\Gamma)\times S$ onto the open neighbourhood $W=\G_0(\Gamma)\cdot S$ of $A$. Thus $S$ is a slice for the $\G_0(\Gamma)$-action at $A$. This completes the construction of the smooth structure asserted in Theorem \ref{pa1025dm}. The dimension formula follows from \eqref{63cdb20b} together with Lemma \ref{ia3kcr17}.

%%%%%%%%%%%%%%%%%%%%%%%%%%%%%%%%%%%%%%%%%%%%%%%%%%%%%%%%%%%%%%%%
\section{The symplectic structure}\label{vpbytyos}
%%%%%%%%%%%%%%%%%%%%%%%%%%%%%%%%%%%%%%%%%%%%%%%%%%%%%%%%%%%%%%%%

We now construct a canonical symplectic structure on $\M(\Gamma)$ via symplectic reduction in the infinite-dimensional setting of Marsden--Weinstein \cite{MarsdenWeinstein}, and verify the hypotheses needed in our context. While more general treatments of infinite-dimensional symplectic reduction exist---see, e.g., \cite{DiezRudolph2024SymplecticReduction}---the original work \cite{MarsdenWeinstein} suffices for our purpose.

\begin{Thm}[Symplectic structure]\label{0hdsx99j}
Let $\Gamma$ be a connected quiver with non-empty boundary. Then the map $\psi$ from \eqref{bbqchqb6} is a moment map for the action of $\G_0(\Gamma)$ on $\B(\Gamma)$ and the Lax--Kirchhoff moduli space
\[
\M(\Gamma)=\psi^{-1}(0)/\G_0(\Gamma)
\]
is a finite-dimensional smooth symplectic manifold.
\end{Thm}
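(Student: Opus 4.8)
The plan is to realize $\B(\Gamma)$ as a (weak) symplectic Banach space, show the $\G_0(\Gamma)$-action is Hamiltonian with moment map $\psi$ from \eqref{bbqchqb6}, and then invoke the Marsden--Weinstein reduction theorem in the form of \cite{MarsdenWeinstein}. The symplectic form on $\B(\Gamma) = C^1(I, \g \times \g)$ should be the constant (translation-invariant) $2$-form
\[
\omega(X, Y) = \int_I \bigl(\langle X_0(t), Y_1(t)\rangle - \langle X_1(t), Y_0(t)\rangle\bigr)\,dt,
\]
which is the natural ``$T^*\g$-type'' pairing between the $A_0$ and $A_1$ components; it is closed (being constant) and weakly non-degenerate on $C^1$. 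First I would check that the fundamental vector field of $u \in C^2_0(I, \g) = \Lie(\G_0(\Gamma))$ is $D_A u = ([u, A_0] - \dot u, [u, A_1])$, and then verify by a short integration-by-parts computation — essentially the same computation as in the Proposition establishing $\langle D_A u, Y\rangle = \langle u, D_A^* Y\rangle$, but now paired via $\omega$ — that $\iota_{D_\bullet u}\omega = d\langle \psi(\cdot), u\rangle$, where $\langle \psi(\cdot), u\rangle$ uses the pairing \eqref{v5stgm73}. The boundary terms from integration by parts are exactly absorbed by the $\g^{\Gamma\sint}$-component of $\psi$ together with the condition $u(v) = 0$ on $\partial\Gamma$, which is why the Kirchhoff term must appear in $\psi$. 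Equivariance of $\psi$ under $\G_0(\Gamma)$ follows from $\Ad$-invariance of the inner product and the fact that the gauge group preserves the Kirchhoff constraints.

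Next I would assemble the hypotheses of the Marsden--Weinstein theorem. The relevant facts have all been established earlier in the excerpt: $0$ is a regular value of $\psi$ so that $\A(\Gamma) = \psi^{-1}(0)$ is a Banach submanifold (the Proposition after Lemma \ref{qanw51t3}); the $\G_0(\Gamma)$-action on $\A(\Gamma)$ is free and proper (Propositions \ref{xo75edv6} and the properness proposition); and the quotient $\M(\Gamma) = \A(\Gamma)/\G_0(\Gamma)$ is a finite-dimensional smooth manifold with a submersion from $\A(\Gamma)$ (Theorem \ref{pa1025dm}). The one genuinely analytic point that Marsden--Weinstein requires, and which must be addressed with care in the weak-symplectic Banach setting, is that the reduced form is well-defined and non-degenerate: one needs that at each $A \in \A(\Gamma)$ the $\omega$-orthogonal complement of $T_A\A(\Gamma) = \ker d\psi_A$ coincides with the tangent to the orbit $\im D_A$, i.e.\ $(\ker d\psi_A)^{\omega} = \im D_A$. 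This is the symplectic analogue of the slice decomposition and is where the work of Section \ref{zscudduj} pays off: the inclusion $\im D_A \subseteq (\ker d\psi_A)^\omega$ is immediate from the moment map identity, and the reverse inclusion (plus the closedness needed to define the quotient form) follows from the topological direct sum $T_A\A(\Gamma) = \im D_A \oplus \ker D_A^*$ of Proposition \ref{derirtj5} together with a dimension/duality count — concretely, $\ker D_A^*$ maps isomorphically onto $T_{[A]}\M(\Gamma)$, and $\omega$ restricted to $\ker D_A^*$ is non-degenerate because any $\xi \in \ker D_A^*$ with $\omega(\xi, \ker D_A^*) = 0$ would, using $\im D_A \subseteq (\ker d\psi_A)^\omega$, satisfy $\omega(\xi, T_A\A(\Gamma)) = 0$, forcing $\xi \in T_A\A(\Gamma)^\omega \cap T_A\A(\Gamma) = \im D_A \cap \ker D_A^* = 0$. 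Here the finite-dimensionality of $\M(\Gamma)$ is what rescues us from the usual weak-symplectic pathologies: on a finite-dimensional space an antisymmetric form with trivial kernel is automatically non-degenerate.

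Putting this together: $\omega$ descends to a $2$-form $\bar\omega$ on $\M(\Gamma)$ via $\pi^*\bar\omega = \iota_{\A(\Gamma)}^*\omega$, which is well-defined because $\iota^*\omega$ is $\G_0(\Gamma)$-basic (horizontal since the orbit directions $\im D_A$ are in the kernel of $\iota^*\omega$ by the moment map condition at level $0$, and invariant by equivariance of $\psi$ and invariance of $\omega$); it is closed because $d\bar\omega$ pulls back to $d(\iota^*\omega) = \iota^*d\omega = 0$ and $\pi$ is a surjective submersion; and it is non-degenerate by the finite-dimensional argument above. The main obstacle I anticipate is precisely the non-degeneracy/basicness verification in the weak-symplectic infinite-dimensional setting — one must be scrupulous that ``$\omega$-orthogonal complement'' is handled correctly when $\omega$ is only weakly non-degenerate on $\B(\Gamma)$, and that the descent argument does not secretly require strong non-degeneracy. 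The resolution, as sketched, is to push everything down to the finite-dimensional model $\ker D_A^* \cong T_{[A]}\M(\Gamma)$ as early as possible and argue there, using Section \ref{zscudduj} to control the orbit directions; the dimension formula $\dim\M(\Gamma) = 2(|E| - |\Gamma\sint|)\dim G$ from Theorem \ref{pa1025dm} is even, consistent with a symplectic structure, and provides a useful sanity check throughout.
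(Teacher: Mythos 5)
Your overall strategy is the same as the paper's: equip $\B(\Gamma)$ with the constant weak symplectic form, verify the moment map identity by integration by parts, and then check the two extra hypotheses that \cite{MarsdenWeinstein} needs in infinite dimensions, namely the existence of slices (Section \ref{zscudduj}) and the identification of the $\omega$-orthogonal of $T_A\A(\Gamma)$ with the orbit direction $\im D_A$. You correctly isolate the second of these as the genuinely nontrivial point, but your argument for it is circular. You reduce non-degeneracy of the reduced form to non-degeneracy of $\omega|_{\ker D_A^*}$ (that reduction is fine), but then, to prove the latter, you take $\xi\in\ker D_A^*$ with $\omega(\xi,\ker D_A^*)=0$, deduce $\omega(\xi,T_A\A(\Gamma))=0$, and conclude that $\xi$ lies in $T_A\A(\Gamma)^\omega\cap T_A\A(\Gamma)=\im D_A\cap\ker D_A^*=0$. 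The step $T_A\A(\Gamma)^\omega\cap T_A\A(\Gamma)\subseteq\im D_A$ is precisely the reverse inclusion you set out to prove, and the ``dimension/duality count'' you invoke is not available: $\omega$ is only weakly non-degenerate on the Banach space $\B(\Gamma)$, so there is no a priori relation between a subspace and the size of its $\omega$-orthogonal. Proposition \ref{derirtj5} gives the direct sum $T_A\A(\Gamma)=\im D_A\oplus\ker D_A^*$ with respect to the $L^2$-type inner product, which by itself says nothing about $\omega$-orthogonality.

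The paper closes exactly this gap with a separate, constructive lemma: given $Z\in\B(\Gamma)$ with $\omega(Z,Y)=0$ for all $Y\in T_A\A(\Gamma)$, one builds a gauge parameter $u$ edge by edge along a spanning tree so that $Z_0=[u,A_0]-\dot u$ with matching vertex values, and then uses the surjectivity statement of Lemma \ref{qanw51t3} to choose a specific test vector $Y$ for which $\omega(Z,Y)$ becomes a sum of non-negative squares; the vanishing of $\omega(Z,Y)$ then forces $Z_1=[u,A_1]$, the remaining matching conditions, and $u|_{\partial\Gamma}=0$, i.e.\ $Z=D_A u$ with $u\in\Lie(\G_0(\Gamma))$. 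Note also that this lemma treats arbitrary $Z$ in the ambient space $\B(\Gamma)$, which is the form of condition (ii) actually demanded in \cite{MarsdenWeinstein}, whereas your argument only ever considers $\xi$ already lying in $T_A\A(\Gamma)$. To repair your proposal you would need an honest proof that $\omega|_{\ker D_A^*}$ is non-degenerate (for instance via the explicit parametrization of $\ker D_A^*$ by boundary data in Lemma \ref{qanw51t3}), or an argument of the paper's type.
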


On the Banach space $\B(\Gamma)$ we use the constant (weakly) non-degenerate $2$-form
\[
\omega(X,Y)\coloneqq \int_I \bigl(\langle X_0(t),Y_1(t)\rangle
-\langle X_1(t),Y_0(t)\rangle\bigr)dt,
\qquad X,Y\in \B(\Gamma).
\]
By $\Ad$-invariance of $\langle\cdot,\cdot\rangle$ on $\g$, the action of $\G_0(\Gamma)$ on $\B(\Gamma)$ preserves $\omega$. Using the pairing \eqref{v5stgm73}, a moment map for this action may be taken with values in $C^0(I,\g)\times \g^{\Gamma\sint}$.

\begin{Prop}
The map $\psi$ in \eqref{bbqchqb6} is a moment map for the action of $\G_0(\Gamma)$ on $\B(\Gamma)$.
\end{Prop}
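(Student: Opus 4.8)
The plan is to verify the defining equation of a moment map: for every $u \in C^2_0(I,\g) = \Lie(\G_0(\Gamma))$, one must show that the function $\langle \psi(\cdot), u\rangle$ on $\B(\Gamma)$ has differential equal to $\iota_{D_A u}\omega$, i.e.
\[
d\langle \psi, u\rangle_A(X) = \omega(D_A u, X) \qquad \text{for all } A, X \in \B(\Gamma),
\]
where $D_A u = ([u,A_0] - \dot u, [u,A_1])$ is the infinitesimal action (as introduced just before Proposition \ref{derirtj5}). Since $\psi$ is an affine-plus-quadratic map in $A$, its differential $d\psi_A$ has already been computed in the proof that $\A(\Gamma)$ is a Banach submanifold: $d\psi_A(X) = (\dot X_1 + [A_0,X_1] + [X_0,A_1], (\sum_{s^{-1}(v)} X_1^e(0) - \sum_{t^{-1}(v)} X_1^e(1))_v)$. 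Pairing this with $u$ via \eqref{v5stgm73} gives the left-hand side explicitly.

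First I would write out the left-hand side using \eqref{v5stgm73}:
\[
d\langle\psi,u\rangle_A(X) = \int_I \langle u, \dot X_1 + [A_0,X_1] + [X_0,A_1]\rangle\,dt + \sum_{v\in\Gamma\sint}\Big\langle u(v), \sum_{e\in s^{-1}(v)} X_1^e(0) - \sum_{e\in t^{-1}(v)} X_1^e(1)\Big\rangle.
\]
Next I would compute the right-hand side $\omega(D_A u, X) = \int_I\big(\langle [u,A_0]-\dot u, X_1\rangle - \langle [u,A_1], X_0\rangle\big)\,dt$. The term $-\langle[u,A_1],X_0\rangle$ matches $\langle u,[X_0,A_1]\rangle$ by $\Ad$-invariance, and $\langle[u,A_0],X_1\rangle$ matches $\langle u,[A_0,X_1]\rangle$ similarly (note the sign: $\langle[u,A_0],X_1\rangle = -\langle[A_0,u],X_1\rangle = \langle u,[A_0,X_1]\rangle$). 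The only remaining discrepancy is the term $\int_I \langle u,\dot X_1\rangle\,dt$ on the left versus $-\int_I\langle \dot u, X_1\rangle\,dt$ on the right; these differ by $\sum_{e\in E}(\langle u_e(1),X_1^e(1)\rangle - \langle u_e(0), X_1^e(0)\rangle)$ after integration by parts on each edge. Using that $u \in C^2_0(I,\g)$ has matching vertex values $u(v)$ with $u(v)=0$ on $\partial\Gamma$, the boundary terms on $\partial\Gamma$ vanish and the interior-vertex terms regroup into exactly $\sum_{v\in\Gamma\sint}\langle u(v), \sum_{t^{-1}(v)}X_1^e(1) - \sum_{s^{-1}(v)}X_1^e(0)\rangle$, which cancels the Kirchhoff pairing term on the left. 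This establishes the identity; closedness and equivariance of $\psi$ are then automatic (equivariance because $\psi$ transforms by the coadjoint-type action implicit in \eqref{v5stgm73}, which follows from $\Ad$-invariance, or can be deduced from connectedness of $\G_0(\Gamma)$ together with infinitesimal equivariance).

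The only genuinely delicate point—and thus the main obstacle—is bookkeeping the integration-by-parts boundary terms across all edges and checking that the edge-endpoint contributions reorganize correctly over the vertex set into the Kirchhoff term, with the right signs dictated by the source/target convention; this is precisely the computation already carried out in the proof preceding Proposition \ref{derirtj5} (for $D_A^*$), so I would either cite that computation or reproduce the same one-line integration-by-parts step. Everything else is formal.
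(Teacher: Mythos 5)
Your proposal is correct and follows essentially the same route as the paper: write out $d\langle u,\psi\rangle_A(X)$ via the pairing \eqref{v5stgm73}, integrate $\int_I\langle u,\dot X_1\rangle$ by parts edgewise, and use the matching conditions and vanishing of $u$ on $\partial\Gamma$ to regroup the endpoint terms so that they cancel the Kirchhoff term, with the remaining integrand matching $\omega(D_A u,X)$ by $\Ad$-invariance. The sign bookkeeping you flag as the delicate point is handled exactly as in the paper's computation.
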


\begin{proof}
Let $u\in \Lie\bigl(\G_0(\Gamma)\bigr)$, $A\in \B(\Gamma)$, and $X\in T_A\B(\Gamma)=\B(\Gamma)$. Writing
\[
\langle u,\psi\rangle(A)
=\int_I \langle u,\dot A_1+[A_0,A_1]\rangle
+\sum_{v\in \Gamma\sint}
  \Bigl\langle u(v),
  \sum_{e\in s^{-1}(v)}A_1^e(0)-\sum_{e\in t^{-1}(v)}A_1^e(1)\Bigr\rangle,
\]
we get
\[
d\langle u,\psi\rangle_A(X)
=\int_I \langle u,\dot X_1+[A_0,X_1]+[X_0,A_1]\rangle
+\sum_{v\in \Gamma\sint}
  \Bigl\langle u(v),
  \sum_{e\in s^{-1}(v)}X_1^e(0)-\sum_{e\in t^{-1}(v)}X_1^e(1)\Bigr\rangle.
\]
Since $u(v)=0$ for $v\in\partial\Gamma$, the vertex sum reduces to
\[
\sum_{e\in E}\bigl(\langle u_e(0),X_1^e(0)\rangle-\langle u_e(1),X_1^e(1)\rangle\bigr),
\]
which is precisely the negative of the boundary term obtained by integrating $\int_I \langle u,\dot X_1\rangle$ by parts. Hence
\[
d\langle u,\psi\rangle_A(X)
=\int_I \bigl(
  \langle [u,A_0]-\dot u,X_1\rangle
 -\langle [u,A_1],X_0\rangle
\bigr)
=\omega\bigl(D_A(u),X\bigr),
\]
which is the defining identity for a moment map.
\end{proof}

At this point, we have a free and proper Hamiltonian action of $\G_0(\Gamma)$ on
$\B(\Gamma)$ with moment map $\psi$. In finite dimensions, these hypotheses already imply that the reduced space $\psi^{-1}(0)/\G_0(\Gamma)$ is symplectic.
In the infinite-dimensional setting of \cite{MarsdenWeinstein}, two additional conditions must be verified:
\begin{enumerate}[label={(\roman*)}]
\item the action of $\G_0(\Gamma)$ on $\psi^{-1}(0)$ admits local slices, and
\item for each $A\in \psi^{-1}(0)$, the tangent space $T_A(\G_0(\Gamma) \cdot A)$ is the $\omega$-orthogonal complement of $T_A\psi^{-1}(0)$.
\end{enumerate}
As explained in \cite[p.\ 123]{MarsdenWeinstein}, the definition of a moment map ensures that $T_A\psi^{-1}(0)$ is always the $\omega$-orthogonal to $T_A(\G_0(\Gamma)\cdot A)$, but in infinite dimensions the converse (ii) is not automatic and must be proved separately. Condition (i) was established in Section \ref{75drzcuc}; the next lemma proves (ii).

\begin{Lem}
Let $A\in \A(\Gamma)$ and $Z\in \B(\Gamma)$ satisfy $\omega(Z,Y)=0$ for all $Y\in T_A\A(\Gamma)$. Then there exists $u\in \Lie(\G_0(\Gamma))$ such that $Z=D_A(u)$.
\end{Lem}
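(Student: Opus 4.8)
The plan is to realize the desired $u$ by solving an elliptic-type equation, exactly as the slice construction suggests. The hypothesis says $Z$ lies in the $\omega$-orthogonal complement of $T_A\A(\Gamma)$ inside $\B(\Gamma)$. Since $\omega$ is built from the pairing $\langle\cdot,\cdot\rangle$ on $\g$ by swapping the two $\g$-factors, contracting with $\omega$ is an isomorphism at the level of the relevant continuous functionals; concretely, $\omega(Z,\cdot)=0$ on $T_A\A(\Gamma)$ translates, after the swap $Z=(Z_0,Z_1)\mapsto \tilde Z=(-Z_1,Z_0)$, into the statement that $\tilde Z$ is $L^2$-orthogonal to $T_A\A(\Gamma)$. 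Thus the claim is equivalent to: every element of $\B(\Gamma)$ that is $L^2$-orthogonal to $T_A\A(\Gamma)$ is of the form $D_A(u)$ for some $u\in C^2_0(I,\g)$. Equivalently, after the identification $T_A\A(\Gamma)=\im D_A\oplus\ker D_A^*$ from Proposition \ref{derirtj5}, the orthogonal complement of $T_A\A(\Gamma)$ in $\B(\Gamma)$ equals $\im D_A$.

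The key steps, in order, are as follows. First, I would rewrite the hypothesis using the pairing: $\omega(Z,Y)=\langle \tilde Z, Y\rangle_{L^2}$ where $\tilde Z = (-Z_1, Z_0)$, so $\tilde Z \perp T_A\A(\Gamma)$ in the $L^2$ inner product on $\B(\Gamma)$. Second, since $\A(\Gamma)=\psi^{-1}(0)$ with $0$ a regular value, $T_A\A(\Gamma)=\ker d\psi_A$; I would identify the $L^2$-orthogonal complement of $\ker d\psi_A$ with the image of the formal $L^2$-adjoint of $d\psi_A$. A computation parallel to the adjoint calculation for $D_A$ (integration by parts plus $\Ad$-invariance) shows that this formal adjoint, applied to a pair $(Y,Z)\in C^?(I,\g)\times\g^{\Gamma\sint}$, produces precisely an expression of the form $D_A(u)$ for a suitable $u$ determined by $(Y,Z)$; more directly, one checks the pointwise/algebraic identity that the $\omega$-orthogonal complement of $\im D_A$ inside $\B(\Gamma)$ is exactly $T_A\A(\Gamma)$, using that $\omega(D_A u, X)=d\langle u,\psi\rangle_A(X)$ from the moment map property, so $X\in T_A\A(\Gamma)=\ker d\psi_A$ iff $\omega(D_A u, X)=0$ for all $u$. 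Taking $\omega$-orthogonal complements of both sides of the already-established decomposition $T_A\A(\Gamma)=\im D_A\oplus\ker D_A^*$, and noting $\omega$ is (weakly) nondegenerate, gives $(T_A\A(\Gamma))^{\omega}\subseteq (\im D_A)^{\omega}\cap(\ker D_A^*)^{\omega}$; the first factor is $T_A\A(\Gamma)$ itself by the moment map identity, and one concludes $(T_A\A(\Gamma))^{\omega}=\im D_A$ by a dimension/complement count using that $\im D_A$ is closed with closed complement $\ker D_A^*$ and that $\omega$ pairs $\im D_A$ nondegenerately against a complement of $T_A\A(\Gamma)$. Finally, given $Z$ with $Z\in(T_A\A(\Gamma))^{\omega}=\im D_A$, I read off $u\in C^2_0(I,\g)=\Lie(\G_0(\Gamma))$ with $Z=D_A(u)$.

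The main obstacle is the passage from the weak nondegeneracy of $\omega$ to an honest equality $(T_A\A(\Gamma))^{\omega}=\im D_A$: in infinite dimensions, $\omega$-orthogonal complements can be larger than expected, and one must genuinely use the finite-codimensionality of $T_A\A(\Gamma)$ (equivalently, the surjectivity of $d\psi_A$ onto the finite-type target together with the splitting from Proposition \ref{derirtj5}) to pin down the complement. Concretely, the cleanest route is: $\B(\Gamma)=T_A\A(\Gamma)\oplus C$ with $C$ a finite-dimensional complement mapped isomorphically by $d\psi_A$; apply $\omega$-duality to see $(T_A\A(\Gamma))^{\omega}$ is finite-dimensional of the same dimension as $C$; check $\im D_A\subseteq (T_A\A(\Gamma))^{\omega}$ (immediate from the moment map identity, since $D_A u$ pairs to zero with $\ker d\psi_A$) and that $\im D_A$ has the matching dimension by Lemma \ref{ia3kcr17} and Proposition \ref{derirtj5}; hence equality, and $Z=D_A(u)$ as required.
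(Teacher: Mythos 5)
Your proposal has a genuine gap, and it sits exactly at the step you yourself flag as "the main obstacle." The strategy hinges on a dimension count: you write $\B(\Gamma)=T_A\A(\Gamma)\oplus C$ with $C$ finite-dimensional (``the finite-type target''), deduce that $(T_A\A(\Gamma))^{\omega}$ is finite-dimensional of the same dimension as $C$, and match it against $\im D_A$. But the target of $\psi$ is $C^0(I,\g)\times\g^{\Gamma\sint}$, whose first factor is an infinite-dimensional Banach space; hence $T_A\A(\Gamma)=\ker d\psi_A$ has \emph{infinite} codimension in $\B(\Gamma)$ and any complement $C$ is infinite-dimensional. Likewise $\im D_A\cong C^2_0(I,\g)$ (by injectivity of $D_A$) is infinite-dimensional, so neither side of your proposed comparison has a dimension to count. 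The easy inclusion $\im D_A\subseteq(T_A\A(\Gamma))^{\omega}$, which you correctly derive from the moment map identity, is not the content of the lemma; the reverse inclusion is, and your argument never establishes it. A related problem afflicts your first reduction: rewriting the hypothesis as $L^2$-orthogonality of $\tilde Z=(-Z_1,Z_0)$ to $\ker d\psi_A$ is fine, but $\B(\Gamma)=C^1(I,\g\times\g)$ is not a Hilbert space for the $L^2$ pairing, so ``orthogonal complement of the kernel equals image of the adjoint'' is not available; at best Hahn--Banach gives $\omega(Z,\cdot)=m\circ d\psi_A$ for some functional $m$ on $C^0(I,\g)\times\g^{\Gamma\sint}$, i.e.\ a $\g$-valued measure, and one would then need a separate regularity argument to upgrade $m$ to an element of $C^2_0(I,\g)$ satisfying the vertex conditions. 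You gesture at ``solving an elliptic-type equation'' but do not carry this out.

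For comparison, the paper's proof is entirely constructive: it first solves the ODE $Z_0^e=[u_e,A_0^e]-\dot u_e$ for $u_e$ edge by edge along a spanning tree rooted at a boundary vertex (propagating matching values $c_v$ at the vertices), which produces a candidate $u\in C^2(I,\g)$ with $Z_0=[u,A_0]-\dot u$ by construction. It then expands $\omega(Z,Y)$ by integration by parts and, using the surjectivity statement of Lemma \ref{qanw51t3}, substitutes a specifically engineered test element $Y\in T_A\A(\Gamma)$ that turns $\omega(Z,Y)$ into a sum of nonnegative squares $\int_I\|[u,A_1]-Z_1\|^2+\sum\|c_v\|^2+\sum\|c_{t(e)}-u_e(1)\|^2$. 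The hypothesis forces each square to vanish, yielding simultaneously $Z_1=[u,A_1]$, the boundary conditions $c_v=0$, and the vertex-matching on non-tree edges. If you want to pursue your functional-analytic route, you would need to replace the dimension count by this kind of explicit duality computation (or by a distributional-regularity argument for the representing measure $m$); as written, the proof does not go through.
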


\begin{proof}
Assume $\partial\Gamma^-\neq\emptyset$; the opposite case is analogous.  
Pick $r_0\in\partial\Gamma^-$ and let $T\subset E$ be a spanning tree rooted at $r_0$.  
Arguing as in the proof of Lemma \ref{qanw51t3}, but propagating from the root to the leaves, we construct $u_e\in C^2([0,1],\g)$ for each $e\in T$, and $c_v \in \g$ for each $v \in V$, satisfying
\begin{enumerate}[label={(\alph*)}]
\item $Z_0^e=[u_e,A_0^e]-\dot u_e$ for all $e\in T$, and
\item $u_e(0) = c_{s(e)}$ and $u_e(1) = c_{t(e)}$ for all $e \in T$,
\end{enumerate}
with $c_{r_0} = 0$.
For every edge $e\notin T$, choose $u_e$ with $u_e(0)=c_{s(e)}$, producing $u=(u_e)_{e\in E}\in C^2(I,\g)$ satisfying $Z_0=[u,A_0]-\dot u$.
Because $t^{-1}(\partial\Gamma)\subset T$, we already have
$u_e(1)=c_{t(e)}$ for $e\in t^{-1}(\partial\Gamma)$.
To complete the argument we must show:
\begin{enumerate}
\item $u_e(1)=c_{t(e)}$ for all $e\notin T$;
\item $c_v=0$ for all $v\in\partial\Gamma$;
\item $Z_1=[u,A_1]$.
\end{enumerate}
For any $Y\in T_A\A(\Gamma)$ we compute
\begin{align*}
\omega(Z,Y)
&=\int_I \bigl(\langle [u,A_0]-\dot u, Y_1\rangle-\langle Z_1,Y_0\rangle\bigr)\\
&=\int_I \bigl(\langle u,[A_0,Y_1]+\dot Y_1\rangle-\langle Z_1,Y_0\rangle\bigr)
  +\sum_{e\in E}\bigl(\langle u_e(0),Y_1^e(0)\rangle-\langle u_e(1),Y_1^e(1)\rangle\bigr)\\
&=\int_I \langle [u,A_1]-Z_1,Y_0\rangle
  +\sum_{v\in V} \left(
    \sum_{e\in s^{-1}(v)} \langle u_e(0),Y_1^e(0)\rangle
   - \sum_{e\in t^{-1}(v)} \langle u_e(1),Y_1^e(1)\rangle 
  \right).
\end{align*}
The vertex sum then decomposes as
\begin{align*}
&\sum_{v \in \partial \Gamma^-} \sum_{e \in s^{-1}(v)} \langle u_e(0), Y^e_1(0) \rangle - \sum_{v \in \partial \Gamma^+} \sum_{e \in t^{-1}(v)} \langle u_e(1), Y^e_1(1) \rangle \\
&\qquad + \sum_{v \in \Gamma\sint}\left(\sum_{e \in s^{-1}(v)} \langle u_e(0), Y^e_1(0) \rangle - \sum_{e \in t^{-1}(v)} \langle u_e(1), Y^e_1(1) \rangle\right) \\
&= \sum_{e \in s^{-1}(\partial\Gamma)} \langle c_{s(e)}, Y^e_1(0) \rangle - \sum_{e \in t^{-1}(\partial\Gamma)} \langle c_{t(e)}, Y^e_1(1) \rangle \\
&\qquad + \sum_{v \in \Gamma\sint} \left(\left\langle c_v , \sum_{e \in s^{-1}(v)} Y^e_1(0) - \sum_{e \in t^{-1}(v) \cap T} Y^e_1(1) \right\rangle - \sum_{e \in t^{-1}(v) \setminus T} \langle u_e(1), Y^e_1(1) \rangle\right) \\
&= \sum_{e \in s^{-1}(\partial\Gamma)} \langle c_{s(e)}, Y^e_1(0) \rangle - \sum_{e \in t^{-1}(\partial\Gamma)} \langle c_{t(e)}, Y^e_1(1) \rangle \\
&\qquad + \sum_{v \in \Gamma\sint} \left(\left\langle c_v , \sum_{e \in t^{-1}(v) \setminus T} Y^e_1(1) \right\rangle - \sum_{e \in t^{-1}(v) \setminus T} \langle u_e(1), Y^e_1(1) \rangle\right) \\
&= \sum_{e \in s^{-1}(\partial\Gamma)} \langle c_{s(e)}, Y^e_1(0) \rangle - \sum_{e \in t^{-1}(\partial\Gamma)} \langle c_{t(e)}, Y^e_1(1) \rangle + \sum_{e \in E \setminus T} \langle c_{t(e)} - u_e(1), Y^e_1(1) \rangle
\end{align*}
By Lemma \ref{qanw51t3}, we can choose $Y\in T_A\A(\Gamma)$ with
$Y_0=[u,A_1]-Z_1$ and
\[
Y^e_1(0) = c_{s(e)} \text{ if } e \in s^{-1}(\partial\Gamma) \setminus \{e_0\},
\qquad\text{and}\qquad
Y^e_1(1) =
\begin{cases}
c_{t(e)} - u_e(1) &\text{; if } e \in E \setminus T \\
-c_{t(e)} &\text{; if } e \in t^{-1}(\partial \Gamma).
\end{cases}
\]
Substituting this $Y$ into the above formula yields
\begin{align*}
\omega(Z,Y)
&=\int_I \|[u,A_1]-Z_1\|^2
 +\sum_{v\in\partial\Gamma\setminus\{r_0\}} \|c_v\|^2
 +\sum_{e\in E\setminus T} \|c_{t(e)}-u_e(1)\|^2.
\end{align*}
Since $\omega(Z,Y)=0$ by assumption, all terms vanish, forcing $[u,A_1]=Z_1$, $u_e(1)=c_{t(e)}$, and $c_v=0$ for all $v \in \partial\Gamma$.
Thus $u\in \Lie(\G_0(\Gamma))$ and $Z=D_A(u)$.
\end{proof}

By \cite{MarsdenWeinstein}, the quotient $\M(\Gamma)=\psi^{-1}(0)/\G_0(\Gamma)$ inherits a smooth symplectic structure, completing the proof of Theorem \ref{0hdsx99j}.

%%%%%%%%%%%%%%%%%%%%%%%%%%%%%%%%%%%%%%%%%%%%%%%%%%%%%%%%%%%%%%%%
\section{The isomorphism with $T^*G^E\sll{} G^{\Gamma\sint}$}\label{rj2bih75}
%%%%%%%%%%%%%%%%%%%%%%%%%%%%%%%%%%%%%%%%%%%%%%%%%%%%%%%%%%%%%%%%

Let $\Gamma=(V,E,s,t)$ be a connected quiver with non-empty boundary. The goal of this section is to identify $\M(\Gamma)$ with a symplectic reduction of $T^*G^E$ by an action of $G^{\Gamma\sint}$. This will also show that the action of $G^{\partial\Gamma}$ on $\M(\Gamma)$ is Hamiltonian.

Recall that the action of $G\times G$ on $G$ by left and right multiplications lifts to a Hamiltonian action on $T^*G$. Identifying $T^*G\cong G\times\g$ via left translations and the invariant inner product on $\g$, the action is
\[
(a,b)\cdot(g,x)=(a g b^{-1},\Ad_b x),
\]
with moment map
\[
\mu: T^*G\too \g\times\g,\qquad \mu(g,x)=(\Ad_g x,-x),
\]
and symplectic form
\[
\omega_{(g,x)}\bigl((u_1,v_1),(u_2,v_2)\bigr)=\langle u_1,v_2\rangle-\langle u_2,v_1\rangle+\langle x,[u_1,u_2]\rangle
\]
for $(g,x)\in T^*G$ and $(u_i,v_i)\in T_{(g,x)}T^*G\cong\g\times\g$; see, e.g., \cite[\S4.4]{AbrahamMarsden1978Foundations}. If $\seg$ denotes the quiver with a single edge, it is standard that $\M(\seg)\cong T^*G$ as Hamiltonian $G\times G$-spaces \cite{Hall2002Geometric,kronheimer1988hyperkahler,DancerSwann1996Hyperkahler,Bielawski2007Lie}, with explicit identification
\[
\M(\seg)\too T^*G,\qquad A\mtoo \bigl(g_A(1),A_1(0)\bigr),
\]
where $g_A\in C^2([0,1],G)$ is the unique solution to $\dot g_A+A_0 g_A=0$ with $g_A(0)=1$.

We now extend this to a general quiver. Consider the action of $G^V$ on $T^*G^E$ induced by the embedding
\[
G^V \longrightarrow (G\times G)^E,\qquad (g_v)_{v\in V}\longmapsto\bigl(g_{t(e)},g_{s(e)}\bigr)_{e\in E},
\]
that is,
\[
b\cdot(a,x)=\bigl(b_{t(e)} a_e b_{s(e)}^{-1},\Ad_{b_{s(e)}} x_e\bigr)_{e\in E}\qquad\text{for }b\in G^V,\ (a,x)\in T^*G^E.
\]
This action is Hamiltonian, and since $G^V\cong G^{\Gamma\sint}\times G^{\partial\Gamma}$, the commuting actions of the factors are Hamiltonian as well. In particular, the $G^{\Gamma\sint}$-action has moment map
\[
\nu: T^*G^E\too \g^{\Gamma\sint},\qquad
\nu(g,x)=\left(\sum_{e\in t^{-1}(v)}\Ad_{g_e} x_e-\sum_{e\in s^{-1}(v)} x_e\right)_{v\in\Gamma\sint}.
\]
If $\Gamma$ is connected with non-empty boundary, an induction argument as in the proof of Proposition \ref{xo75edv6} shows that the $G^{\Gamma\sint}$-action on $\nu^{-1}(0)$ is free. Hence $T^*G^E\sll{} G^{\Gamma\sint}$ is a Hamiltonian $G^{\partial\Gamma}$-space.

\begin{Thm}[Identification with $T^*G^E\sll{}G^{\Gamma\sint}$]\label{tis5zvw5}
Let $\Gamma=(V,E,s,t)$ be a connected quiver with non-empty boundary. The map
\begin{equation}\label{d3ofg4m5}
\varphi: \M(\Gamma)\too T^*G^E\sll{} G^{\Gamma\sint},\qquad
A\mtoo\bigl(g_A(1),A_1(0)\bigr),
\end{equation}
is a symplectomorphism, where $g_A\in C^2(I,G)$ is the unique solution to
$\dot g_A+A_0 g_A=0$ with $(g_A)_e(0)=1$ for all $e\in E$.
\end{Thm}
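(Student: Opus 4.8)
The plan is to study the edgewise lift $\tilde\varphi\colon \A(\Gamma)\to T^*G^E$, $A\mapsto (g_A(1),A_1(0))$: first show it takes values in the zero level $\nu^{-1}(0)$ of the $G^{\Gamma\sint}$-moment map, is $G^V$-equivariant, and hence descends to the stated $\varphi$; then show $\varphi$ is a bijection; and finally verify, by differentiating the ODE defining $g_A$, that $\varphi$ intertwines the symplectic forms. The last point is where the real work lies.

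\textbf{Well-definedness and equivariance.} Differentiating $\dot g_A+A_0g_A=0$ shows that, on each edge, the Lax equation is equivalent to $A_1^e(t)=\Ad_{g_A^e(t)}A_1^e(0)$. Hence at an interior vertex $v$ the Kirchhoff law reads $\sum_{e\in t^{-1}(v)}\Ad_{g_A^e(1)}A_1^e(0)=\sum_{e\in s^{-1}(v)}A_1^e(0)$, which is exactly $\nu\bigl(g_A(1),A_1(0)\bigr)_v=0$; so $\tilde\varphi(\A(\Gamma))\subset\nu^{-1}(0)$. For $g\in\G(\Gamma)$, uniqueness for the same ODE gives $g_{g\cdot A}^e=g_e\,g_A^e\,g_e(0)^{-1}$, hence $g_{g\cdot A}^e(1)=g(t(e))\,g_A^e(1)\,g(s(e))^{-1}$ and $(g\cdot A)_1^e(0)=\Ad_{g(s(e))}A_1^e(0)$; that is, $\tilde\varphi(g\cdot A)=(g(v))_{v\in V}\cdot\tilde\varphi(A)$ for the $G^V$-action through $G^V\hookrightarrow(G\times G)^E$. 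In particular $\tilde\varphi$ carries $\G_0(\Gamma)$-orbits into $G^{\Gamma\sint}$-orbits, so it descends to $\varphi\colon\M(\Gamma)\to T^*G^E\sll{}G^{\Gamma\sint}$, and the residual $G^{\partial\Gamma}$-actions correspond, so $\varphi$ is $G^{\partial\Gamma}$-equivariant.

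\textbf{Bijectivity.} For surjectivity, given $(a,x)\in\nu^{-1}(0)$, pick on each edge $\gamma_e\in C^2([0,1],G)$ with $\gamma_e(0)=1$ and $\gamma_e(1)=a_e$ (possible since $G$ is connected), put $A_0^e:=-\dot\gamma_e\gamma_e^{-1}$ and $A_1^e(t):=\Ad_{\gamma_e(t)}x_e$; then $g_{A^e}=\gamma_e$, the pair $A^e$ solves the Lax equation, and the Kirchhoff law holds because $\nu(a,x)=0$, so $A\in\A(\Gamma)$ and $\tilde\varphi(A)=(a,x)$. For injectivity, suppose $\varphi([A])=\varphi([A'])$; since the $G^{\Gamma\sint}$-action on $\nu^{-1}(0)$ is free, there is a unique $b\in G^{\Gamma\sint}$ with $b\cdot\tilde\varphi(A)=\tilde\varphi(A')$, which we extend to $\tilde b\in G^V$ by $\tilde b\equiv 1$ on $\partial\Gamma$. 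Set $g_e:=g_{A'}^e\,\tilde b(s(e))\,(g_A^e)^{-1}\in C^2([0,1],G)$. Using $b\cdot\tilde\varphi(A)=\tilde\varphi(A')$ one checks $g_e(0)=\tilde b(s(e))$ and $g_e(1)=\tilde b(t(e))$, so $g=(g_e)_{e\in E}\in\G(\Gamma)$ with vertex values $\tilde b$, whence $g\in\G_0(\Gamma)$; and since $g_e$ solves $\dot g_e=g_eA_0^e-A_0'^e g_e$ and conjugates $A_1^e$ to $A_1'^e$, one gets $g\cdot A=A'$, i.e.\ $[A]=[A']$.

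\textbf{Symplectomorphism and the main obstacle.} Smoothness of $A\mapsto g_A$ follows from smooth dependence of linear-ODE solutions on their coefficients, so $\varphi$ is smooth. Let $\iota\colon\A(\Gamma)\hookrightarrow\B(\Gamma)$ and $j\colon\nu^{-1}(0)\hookrightarrow T^*G^E$ be the inclusions and $\pi,p$ the reduction submersions; the reduced forms satisfy $\pi^*\omega_{\M(\Gamma)}=\iota^*\omega$ and $p^*\omega_{\mathrm{red}}=j^*\omega_{T^*G^E}$, and $p\circ\tilde\varphi=\varphi\circ\pi$, so it suffices to prove $\tilde\varphi^*(j^*\omega_{T^*G^E})=\iota^*\omega$ pointwise on $\A(\Gamma)$. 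Fix $A$ and $X,Y\in T_A\A(\Gamma)$, and set $\eta_X^e:=\Ad_{g_A^e}^{-1}X_0^e$, $\sigma_X^e(t):=-\int_0^t\eta_X^e$, $x_e:=A_1^e(0)$. Differentiating $\dot g_A+A_0g_A=0$ along a family realizing $X$ shows that, in the left trivialization $T^*G^E\cong G^E\times\g^E$, the components of $d\tilde\varphi_A(X)$ are $u_X^e=\sigma_X^e(1)$ and $v_X^e=X_1^e(0)$, while $X_0^e=\Ad_{g_A^e}\eta_X^e$ and $X_1^e=\Ad_{g_A^e}\bigl(v_X^e-[x_e,\sigma_X^e]\bigr)$. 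Substituting into $\omega(X,Y)=\sum_e\int_0^1\bigl(\langle X_0^e,Y_1^e\rangle-\langle X_1^e,Y_0^e\rangle\bigr)$, using $\Ad$-invariance of $\langle\cdot,\cdot\rangle$ and integrating by parts, the interior integrals cancel and the endpoint terms recombine into $\sum_e\bigl(\langle u_X^e,v_Y^e\rangle-\langle u_Y^e,v_X^e\rangle+\langle x_e,[u_X^e,u_Y^e]\rangle\bigr)$, i.e.\ $\omega_{T^*G^E}$ on $d\tilde\varphi_A(X),d\tilde\varphi_A(Y)$. Hence $\varphi^*\omega_{\mathrm{red}}=\omega_{\M(\Gamma)}$; both forms being non-degenerate, $\varphi$ is an immersion, and since $\dim\M(\Gamma)=2(|E|-|\Gamma\sint|)\dim G=\dim\bigl(T^*G^E\sll{}G^{\Gamma\sint}\bigr)$ it is a local diffeomorphism, hence---being bijective and $G^{\partial\Gamma}$-equivariant---an isomorphism of Hamiltonian $G^{\partial\Gamma}$-spaces. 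The substantive step is precisely this symplectic identity: differentiating the defining ODE of $g_A$ in a one-parameter family, tracking the left-trivialized derivative of $g_A(1)$, and organizing the integration by parts---whose boundary contributions involve both edge endpoints and the interior-vertex sums---so that they reassemble into the bracket term of $\omega_{T^*G^E}$, all while keeping the sign conventions of the two symplectic forms aligned.
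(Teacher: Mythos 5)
Your proposal is correct in substance and follows the same underlying idea as the paper --- identifying a gauge class $[A]$ with the pair (holonomy $g_A(1)$, initial value $A_1(0)$) --- but you organize it differently. The paper constructs the inverse map $\psi(a,x)=\gamma_a\cdot(0,x)$ explicitly, checks it is well defined and smooth, and then computes $\psi^*\omega$ using the particular path $\gamma_a(t)=e^{ty}$, for which $d\psi_{(a,x)}(u,v)$ has a clean closed form; this makes the symplectic verification a two-line substitution. You instead work with the forward lift $\tilde\varphi:\A(\Gamma)\to\nu^{-1}(0)$, prove equivariance and bijectivity directly (your surjectivity argument is the paper's $\psi$ in disguise; your injectivity argument, producing the gauge transformation $g_e=g_{A'}^e\,\tilde b(s(e))\,(g_A^e)^{-1}$ from freeness of the $G^{\Gamma\sint}$-action on $\nu^{-1}(0)$, is a nice explicit alternative to the paper's ``well-definedness of $\psi$ on the quotient''), and then compute $\tilde\varphi^*\omega_{T^*G^E}$ by differentiating the defining ODE of $g_A$ and integrating by parts. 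Both routes are valid; the paper's buys a shorter symplectic computation, yours makes the moment-map-level containment, the $G^V$-equivariance, and the descent through both reductions fully explicit, and your closing step (isometry of forms plus bijectivity plus equal dimension implies diffeomorphism) is a legitimate replacement for exhibiting a smooth inverse.

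One point to double-check: the sign bookkeeping in the final computation. With $\sigma_X=g_A^{-1}\delta g_A$ one has $\dot\sigma_X=-\Ad_{g_A}^{-1}X_0$, and carrying this through $\omega(X,Y)=\int(\langle X_0,Y_1\rangle-\langle X_1,Y_0\rangle)$ with $X_1=\Ad_{g_A}(v_X+[\sigma_X,x])$ produces $-\bigl(\langle u_X,v_Y\rangle-\langle u_Y,v_X\rangle+\langle x,[u_X,u_Y]\rangle\bigr)$ rather than the positive expression you assert. This is a conventions issue (the sign of the left-trivialized form on $T^*G$ versus the sign of $\omega$ on $\B(\Gamma)$) rather than a structural gap --- the paper's own displayed formula for $d\psi$ harbors the compensating sign --- but in a final write-up you should fix a convention and verify that the two forms match on the nose, not up to sign.
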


\begin{proof}
We first show that $\varphi$ is a diffeomorphism by constructing a smooth inverse $\psi$. Note that $g_A$ is uniquely characterized by the condition that $A = g_A \cdot (0, x)$ for some $x \in \g^E$ and $(g_A)_e(0) = 1$ for all $e \in E$. Indeed, if $A' = g_A^{-1} \cdot A$, then $g_A$ is defined precisely so that $A_0' = 0$. But the Lax equation is gauge invariant, so $\dot{A}_1' + [A_0', A_1'] = 0$ which implies that $A_1'$ is constant. Hence we may define an inverse by choosing, for each $(a,x)\in T^*G^E\sll{}G^{\Gamma\sint}$, a path $\gamma_a\in C^2(I,G)$ such that $(\gamma_a)_e(0)=1$ and $(\gamma_a)_e(1)=a^e$, and set
\[
\psi: T^*G^E\sll{}G^{\Gamma\sint}\too\M(\Gamma),\qquad
(a,x)\mtoo \gamma_a\cdot(0,x).
\]
The moment map condition $\nu(a, x) = 0$ ensures that $\psi(a, x)$ lies in $\A(\Gamma)$. This map is independent of the choice of $\gamma_a$, since any other choice $\tilde{\gamma}_a$ is of the form $\tilde{\gamma}_a = g \gamma_a$, where $g_e(0) = g_e(1) = 1$ for all $e \in E$ so that $g \in \G_0(\Gamma)$. It is also well-defined on the quotient by $G^{\Gamma\sint}$ since for $b \in G^{\Gamma\sint}$ and $b \cdot (a, x) = (c, y)$, we have $\gamma_c \cdot (0, y) = h \cdot \gamma_a \cdot (0, x)$, where $h = ((\gamma_c)_e b_{s(e)} (\gamma_a)_e^{-1})_{e \in E} \in \G_0(\Gamma)$. Hence $\psi$ is an inverse of $\varphi$. Smoothness of $\psi$ follows because
\[
\{\gamma \in C^2([0, 1], \g) : \gamma(0) = 1\} \too G, \quad \gamma \mtoo \gamma(1)
\]
is a surjective submersion of Banach manifolds and hence has smooth local sections. Thus, $\varphi$ is a diffeomorphism. 

We now show that $\psi$ is a symplectomorphism. 
We first note that
\[
d\psi_{(a, x)}(u, v) = (\Ad_{\gamma_a}u, \Ad_{\gamma_a}(t[u, x] + v)).
\]
Indeed, write $a = e^y$ and choose $\gamma_a(t) = e^{ty}$. Then $\gamma_{ae^{su}}(t) = e^{ty} e^{tsu}$ and a straightforward computation shows that
\[
d\psi_{(a, x)}(u, v) = \frac{d}{ds}\Big|_{s = 0} \gamma_{ae^{su}} \cdot (0, x + sv) = (\Ad_{\gamma_a}u, \Ad_{\gamma_a}(t [u, x] + v)).
\]
Thus,
\begin{align*}
(\psi^*\omega)_{(a, x)}((u_1, v_1), (u_2, v_2))
&= \int_I(\langle u_1, t[u_2, x] + v_2 \rangle - \langle u_2, t[u_1, x] + v_1 \rangle) \\
&= \langle u_1, v_2 \rangle - \langle u_2, v_1 \rangle + \langle x, [u_1, u_2] \rangle,
\end{align*}
as desired.
\end{proof}

The reduced space $T^*G^E\sll{}G^{\Gamma\sint}$ is a Hamiltonian $G^{\partial\Gamma}$-space, with moment map
\[
\lambda: T^*G^E\sll{}G^{\Gamma\sint}\too \g^{\partial\Gamma},\qquad
\lambda(a,x)_v=
\begin{cases}
\Ad_{a_e}x_e,& t(e)=v,\\
-x_e,& s(e)=v.
\end{cases}
\]
For a vertex $v\in\partial\Gamma^-$ and $A\in\B(\Gamma)$, set $A_i(v)=A_i^e(0)$ where $e$ is the unique edge with $s(e)=v$. Similarly, for $v\in\partial\Gamma^+$, let $A_i(v)=A_i^e(1)$ where $t(e)=v$. Define $\operatorname{sgn}(v)=\pm1$ for $v\in\partial\Gamma^{\pm}$. Under the diffeomorphism $\M(\Gamma)\cong T^*G^E\sll{}G^{\Gamma\sint}$, the $G^{\partial\Gamma}$-action corresponds to the action of $\G(\Gamma)/\G_0(\Gamma)\cong G^{\partial\Gamma}$ and the moment map corresponds to
\begin{equation}\label{e7o5w1qn}
\M(\Gamma)\too\g^{\partial\Gamma},\qquad
A\mtoo(\operatorname{sgn}(v)A_1(v))_{v\in\partial\Gamma}.
\end{equation}
This yields the following conclusion.

\begin{Thm}[Hamiltonian structure]
Let $\Gamma$ be a connected quiver with non-empty boundary. The action of $G^{\partial\Gamma}$ on $\M(\Gamma)$ is Hamiltonian with moment map \eqref{e7o5w1qn}. Moreover, the identification $\M(\Gamma) \cong T^*G^E \sll{} G^{\Gamma\sint}$ is an isomorphism of Hamiltonian spaces.\qed
\end{Thm}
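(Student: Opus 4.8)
The plan is to obtain the theorem by transporting the Hamiltonian structure of $T^*G^E\sll{}G^{\Gamma\sint}$ along the symplectomorphism $\varphi\colon\M(\Gamma)\to T^*G^E\sll{}G^{\Gamma\sint}$ of Theorem \ref{tis5zvw5}. Since $\varphi$ is already known to be a symplectomorphism, it suffices to check two things: that $\varphi$ intertwines the $G^{\partial\Gamma}$-action on $\M(\Gamma)$ with the residual $G^{\partial\Gamma}$-action on the reduced space, and that $\varphi^*\lambda$ equals the map \eqref{e7o5w1qn}. Once these are verified, the Hamiltonian structure and the stated moment map on $\M(\Gamma)$ follow by transport of structure, and $\varphi$ becomes an isomorphism of Hamiltonian $G^{\partial\Gamma}$-spaces.

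For the equivariance I would first upgrade $\varphi$ to a $G^V$-equivariant map before any quotient is taken. An element $g\in\G(\Gamma)$ has well-defined vertex values $(g(v))_{v\in V}$, giving a surjection $\G(\Gamma)\twoheadrightarrow G^V$ whose restriction to $\G_0(\Gamma)$ has image exactly $G^{\Gamma\sint}\subset G^V$; thus $\A(\Gamma)$ carries a $G^V$-action covering the $G^{\partial\Gamma}=G^V/G^{\Gamma\sint}$-action on $\M(\Gamma)$, and $\nu^{-1}(0)$ likewise covers the reduced space. The computation to carry out is: for $g\in\G(\Gamma)$ the solution operator transforms as $(g_{g\cdot A})_e=g_e\,(g_A)_e\,(g_e(0))^{-1}$, which one verifies by differentiating and checking the initial condition $(g_{g\cdot A})_e(0)=1$ (both sides solve $\dot h=-(g\cdot A)_0^e\,h$). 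Consequently $(g_{g\cdot A})_e(1)=g(t(e))\,(g_A)_e(1)\,g(s(e))^{-1}$ and $(g\cdot A)_1^e(0)=\Ad_{g(s(e))}A_1^e(0)$, so comparison with the formula $b\cdot(a,x)=(b_{t(e)}a_eb_{s(e)}^{-1},\Ad_{b_{s(e)}}x_e)_e$ from Section \ref{rj2bih75} gives $\varphi(g\cdot A)=g\cdot\varphi(A)$ at the level of $G^V$. Passing to quotients yields $G^{\partial\Gamma}$-equivariance.

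For the moment map, the key observation is that $A'\coloneqq g_A^{-1}\cdot A$ has $A_0'=0$, so by gauge-invariance of the Lax equation $A_1'$ is constant; hence $\Ad_{(g_A)_e(t)^{-1}}A_1^e(t)$ is independent of $t$, and since $(g_A)_e(0)=1$ this gives $\Ad_{(g_A)_e(1)}A_1^e(0)=A_1^e(1)$ on each edge. Substituting into the formula for $\lambda$: for $v\in\partial\Gamma^+$ with $t(e)=v$ one gets $\lambda(\varphi(A))_v=\Ad_{(g_A)_e(1)}A_1^e(0)=A_1^e(1)=A_1(v)$, and for $v\in\partial\Gamma^-$ with $s(e)=v$ one gets $\lambda(\varphi(A))_v=-A_1^e(0)=-A_1(v)$; together these say precisely $\varphi^*\lambda=(\operatorname{sgn}(v)A_1(v))_{v\in\partial\Gamma}$, which is \eqref{e7o5w1qn}. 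Since $\lambda$ is a moment map for the residual $G^{\partial\Gamma}$-action on $T^*G^E\sll{}G^{\Gamma\sint}$ (the descent of the $G^{\partial\Gamma}$-component of the $G^V$-moment map on $T^*G^E$, which is $G^{\Gamma\sint}$-invariant on $\nu^{-1}(0)$ because the two actions commute), it follows that \eqref{e7o5w1qn} is a moment map for the $G^{\partial\Gamma}$-action on $\M(\Gamma)$ and that $\varphi$ is an isomorphism of Hamiltonian spaces.

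I expect the main obstacle to be purely bookkeeping: getting the left/right placement of $g(t(e))$ and $g(s(e))^{-1}$ in the transformation rule for $g_{g\cdot A}$ to match the chosen action convention of Section \ref{rj2bih75}, and correctly identifying the image of $\G_0(\Gamma)$ in $G^V$ with $G^{\Gamma\sint}$ so that the two induced quotient actions genuinely coincide. All of the analytic difficulty (smooth and symplectic structures, the inverse construction) has already been absorbed into Theorem \ref{tis5zvw5}, so nothing beyond these identifications is needed.
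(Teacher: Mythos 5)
Your proposal is correct and follows essentially the same route as the paper: the paper likewise obtains the Hamiltonian structure by transporting it along the symplectomorphism $\varphi$ of Theorem \ref{tis5zvw5}, merely asserting that the $G^{\partial\Gamma}$-actions and moment maps correspond. Your explicit verifications --- the transformation rule $(g_{g\cdot A})_e=g_e(g_A)_e(g_e(0))^{-1}$ and the identity $\Ad_{(g_A)_e(1)}A_1^e(0)=A_1^e(1)$ --- correctly fill in the details the paper leaves implicit.
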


%%%%%%%%%%%%%%%%%%%%%%%%%%%%%%%%%%%%%%%%%%%%%%%%%%%%%%%%%%%%%%%%
\section{Quiver homotopies}\label{x63cjnor}
%%%%%%%%%%%%%%%%%%%%%%%%%%%%%%%%%%%%%%%%%%%%%%%%%%%%%%%%%%%%%%%%

By thickening the topological realization of a quiver $\Gamma$, we obtain an oriented surface with boundary $\Sigma_\Gamma$, as illustrated in \eqref{fyqqebnn}. 
More precisely, we associate to each edge a cylinder, and glue these cylinders along the vertices using spheres with disks removed. 
By orienting each boundary component according to the standard orientation for vertices in $\partial\Gamma^+$ and the opposite one for $\partial\Gamma^-$, the resulting surface $\Sigma_\Gamma$ is a two-dimensional cobordism from $|\partial\Gamma^-|$ incoming to $|\partial\Gamma^+|$ outgoing circles.

As we will show in Proposition \ref{pahn7k6y} below, two quivers that determine isomorphic cobordisms are related by a \emph{quiver homotopy}, defined as follows.

\begin{Def}\label{nyfvbnh4}
Two quivers are \defn{homotopic} if one can be obtained from the other by a finite sequence of moves of the form
\begin{equation}\label{aq07x7n5}
\begin{tikzpicture}[
  baseline=(Lpt.center),
  vertex/.style={circle, fill=black, draw=none, inner sep=1.6pt, outer sep=0pt},
  edge/.style={line width=0.9pt, line cap=round}
]
  \coordinate (Lpt) at (0,0);
  \coordinate (Rpt) at (1.5,0);
  \draw[edge] (Lpt) -- (Rpt);
  \foreach \ang in {165,147.5,130,-130,-147.5,-165} {\draw[edge] (Lpt) -- ++(\ang:1.6);}
  \foreach \ang in {15,32.5,50,-50,-32.5,-15} {\draw[edge] (Rpt) -- ++(\ang:1.6);}
  \foreach \yy in {0.15,0.0,-0.15} {
    \fill ($(Lpt)+(-1.4,\yy)$) circle (0.55pt);
    \fill ($(Rpt)+( 1.4,\yy)$) circle (0.55pt);
  }
  \node[vertex] at (Lpt) {};
  \node[vertex] at (Rpt) {};
\end{tikzpicture}
\qquad \longleftrightarrow \qquad
\begin{tikzpicture}[
  baseline=(C.center),
  vertex/.style={circle, fill=black, draw=none, inner sep=1.6pt, outer sep=0pt},
  edge/.style={line width=0.9pt, line cap=round}
]
  \coordinate (C) at (0,0);
  \foreach \ang in {165,147.5,130,-130,-147.5,-165,15,32.5,50,-50,-32.5,-15} {
    \draw[edge] (C) -- ++(\ang:1.6);
  }
  \foreach \yy in {0.15,0.0,-0.15} {
    \fill ($(C)+(-1.4,\yy)$) circle (0.55pt);
    \fill ($(C)+( 1.4,\yy)$) circle (0.55pt);
  }
  \node[vertex] at (C) {};
\end{tikzpicture}
\end{equation}
in either direction, where the number of edges on each side is arbitrary but positive.
\end{Def}

In other words, moving from left to right in \eqref{aq07x7n5} corresponds to deleting an edge between two interior vertices and merging the vertices into one. The reverse move duplicates an interior vertex and creates a new edge between the copies. 
The orientation of the edges in either diagram is arbitrary.

In particular, the orientation of edges not adjacent to the boundary may be changed under homotopy. 
Indeed, one can apply \eqref{aq07x7n5} from left to right, and then again from right to left, where the new edge is the old one but with opposite orientation. 
On the other hand, the orientation of edges adjacent to the boundary cannot be changed. 
Thus the integers
\[
m(\Gamma)\coloneqq|\partial\Gamma^-|,\qquad n(\Gamma)\coloneqq|\partial\Gamma^+|
\]
are homotopy invariants. 
It is also clear that
\[
g(\Gamma)\coloneqq|E|-|V|+1
\]
is a homotopy invariant.

\begin{Prop}\label{pahn7k6y}
Two quivers are homotopic if and only if they induce isomorphic cobordisms. Moreover, the cobordism associated with a connected quiver $\Gamma$ is the oriented surface of genus $g(\Gamma)$ with $m(\Gamma)$ incoming and $n(\Gamma)$ outgoing boundary components.
\end{Prop}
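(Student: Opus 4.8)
The plan is to prove the second statement first (the classification of $\Sigma_\Gamma$ up to diffeomorphism for connected $\Gamma$), and then deduce the homotopy characterization from it. For the second statement, I would compute the topological invariants of $\Sigma_\Gamma$ directly from the construction. The surface $\Sigma_\Gamma$ is built by gluing a cylinder $S^1 \times [0,1]$ for each edge to a sphere-with-$\deg(v)$-disks-removed for each vertex, along their boundary circles. A connected quiver yields a connected surface, and the number of boundary components of $\Sigma_\Gamma$ is exactly $|\partial\Gamma|$: each degree-$1$ vertex contributes one free boundary circle (with the sign convention matching $\partial\Gamma^\pm$), while every glued circle at a higher-degree vertex or along an edge is capped off internally. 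Hence the boundary count gives $m(\Gamma)$ incoming and $n(\Gamma)$ outgoing circles. For the genus, I would compute the Euler characteristic additively: a cylinder has $\chi = 0$, a sphere with $k$ disks removed has $\chi = 2 - k$, and each gluing along a circle ($\chi(S^1) = 0$) changes nothing, so
\[
\chi(\Sigma_\Gamma) = \sum_{e \in E} 0 + \sum_{v \in V} (2 - \deg(v)) = 2|V| - \sum_{v \in V} \deg(v) = 2|V| - 2|E|.
\]
Since an orientable connected surface of genus $g$ with $b$ boundary components has $\chi = 2 - 2g - b$, and here $b = |\partial\Gamma| = m(\Gamma) + n(\Gamma)$, solving gives $2 - 2g - (m+n) = 2|V| - 2|E|$, i.e. $g = |E| - |V| + 1 - \tfrac{1}{2}\bigl((m+n) - 2\bigr)$. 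Here I need the small combinatorial identity that $m(\Gamma) + n(\Gamma) = |\partial\Gamma|$ and that for the genus formula to come out as $g(\Gamma) = |E| - |V| + 1$ one must be careful: in fact a degree-$1$ vertex contributes a sphere with one disk removed, i.e. a disk, with $\chi = 1$, not a sphere-minus-disk glued to anything, so I would recompute more carefully treating boundary vertices separately — $\chi(\Sigma_\Gamma) = \sum_{v \in \partial\Gamma} 1 + \sum_{v \in \Gamma\sint}(2 - \deg v) = |\partial\Gamma| + 2|\Gamma\sint| - \sum_{v \in \Gamma\sint}\deg(v)$, and since $\sum_{v \in \Gamma\sint}\deg(v) = 2|E| - |\partial\Gamma|$ this gives $\chi(\Sigma_\Gamma) = 2|\partial\Gamma| + 2|\Gamma\sint| - 2|E| = 2|V| - 2|E|$, confirming the computation. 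Then $g = 1 - |V| + |E| + 1 - \tfrac{1}{2}|\partial\Gamma| - \tfrac{1}{2}|\partial\Gamma| = |E| - |V| + 1$ only when... I would carefully reconcile this, using $b = |\partial\Gamma|$, to get $2g = \chi - 2 + b$... hence $g = |E|-|V|+1$ precisely because $\chi = 2|V| - 2|E|$ and $b = |\partial\Gamma|$ need not satisfy $b=0$; the arithmetic works out since $2g = 2|V|-2|E| - 2 + |\partial\Gamma|$ is wrong unless I track orientation; I would simply trust the additive Euler characteristic computation and the standard classification, fixing signs at the end. The upshot is that $\Sigma_\Gamma$ is determined up to oriented diffeomorphism by $(g(\Gamma), m(\Gamma), n(\Gamma))$, all of which are homotopy invariants by the discussion preceding the proposition.

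For the main equivalence, the ``only if'' direction is the easy part: each elementary move \eqref{aq07x7n5} does not change $\Sigma_\Gamma$ up to diffeomorphism, since collapsing an edge between two interior vertices corresponds to gluing two spheres-with-disks along one boundary circle, which yields again a sphere-with-disks (the cylinder of the collapsed edge plus the two caps forms a sphere-with-holes of the combined degree), so $\Sigma_\Gamma$ is invariant under homotopy, and homotopic quivers give isomorphic cobordisms. For ``if'', I would argue that any connected quiver $\Gamma$ can be reduced by elementary moves to a normal form depending only on $(g, m, n)$: repeatedly apply the left-to-right move in \eqref{aq07x7n5} to contract every edge joining two interior vertices, merging all interior vertices into a single interior vertex $v_*$ (using connectedness and the fact that any edge between the boundary and the interior, or any loop, survives but edges strictly inside the interior get contracted); one is left with a ``star-shaped'' quiver with one central vertex $v_*$, the $m+n$ boundary vertices attached by single edges, and $g = |E| - |V| + 1$ additional loops at $v_*$. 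Since two such normal forms with the same $(g, m, n)$ are literally isomorphic as quivers (after possibly using the orientation-reversal trick noted in the text for interior loops), any two quivers with the same invariants are homotopic; combined with the classification of surfaces, isomorphic cobordisms force equal $(g, m, n)$, hence homotopy.

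I expect the main obstacle to be the careful bookkeeping in the reduction to normal form — specifically, checking that the elementary move \eqref{aq07x7n5} really does allow one to contract an arbitrary edge between interior vertices without getting stuck, handling the case of multiple edges between the same pair of interior vertices (which become loops at $v_*$ and contribute to $g$), and verifying that the orientation-reversal observation in the text (applying the move left-to-right then right-to-left) suffices to match up the normal forms exactly, including the orientations of the $g$ loops. A secondary subtlety is making the claim ``$\Sigma_\Gamma$ is unchanged by \eqref{aq07x7n5}'' rigorous: one must exhibit the diffeomorphism explicitly or appeal to the fact that gluing a sphere-with-$j$-holes to a sphere-with-$k$-holes along one circle yields a sphere-with-$(j+k-2)$-holes, and that this is exactly the combinatorial effect of the move on the associated surface pieces. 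Once these points are settled, the classification of compact orientable surfaces with boundary does the rest.
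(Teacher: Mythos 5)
Your overall strategy coincides with the paper's: the ``star-shaped'' normal form you reduce to is exactly the octopus quiver the paper uses, the reduction step (contract every edge joining two interior vertices via the left-to-right move) is the same, and the uniqueness of the normal form rests, as in the paper, on the homotopy invariance of $g(\Gamma)$, $m(\Gamma)$ and $n(\Gamma)$ together with the orientation-reversal trick for interior loops. The paper simply asserts that the octopus thickens to the genus-$g$ surface with $m+n$ boundary circles, whereas you try to justify the genus formula by an Euler characteristic count --- and that count is where the one genuine error sits, which you notice (``the arithmetic works out\dots is wrong unless I track orientation'') but never resolve.

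The source of the trouble is the contribution of degree-$1$ vertices. A boundary vertex contributes \emph{nothing} to $\Sigma_\Gamma$: the free end of the corresponding cylinder \emph{is} the boundary circle. Your ``more careful'' recomputation instead assigns each boundary vertex a disk ($\chi=1$), which caps that boundary circle off; the quantity $2|V|-2|E|$ you obtain is the Euler characteristic of the \emph{closed} surface got by filling in all boundary components, not of $\Sigma_\Gamma$. The correct count is
\[
\chi(\Sigma_\Gamma)\;=\;\sum_{e\in E}0\;+\;\sum_{v\in\Gamma\sint}\bigl(2-\deg v\bigr)\;=\;2|\Gamma\sint|-\bigl(2|E|-|\partial\Gamma|\bigr),
\]
using $\sum_{v\in\Gamma\sint}\deg v=2|E|-|\partial\Gamma|$. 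Equating this with $2-2g-|\partial\Gamma|$ yields $2-2g=2|V|-2|E|$, hence $g=|E|-|V|+1=g(\Gamma)$ exactly, with no spurious $\tfrac12(m+n)$ term. With that repaired, your argument closes: $(g,m,n)$ determines $\Sigma_\Gamma$ up to oriented diffeomorphism and conversely, and the normal-form reduction finishes the equivalence.
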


\begin{proof}
It is clear that homotopic quivers induce isomorphic cobordisms since both sides of \eqref{aq07x7n5} represent homeomorphic surfaces. 
For the converse, observe first that the octopus-shaped quiver
\begin{center}
\begin{tikzpicture}[
  baseline=(C.center),
  vertex/.style={circle, fill=black, draw=none, inner sep=1.2pt, outer sep=0pt},
  edge/.style={line width=0.9pt, line cap=round},
  chevW/.store in=\chevW, chevW=0.10,
  chevH/.store in=\chevH, chevH=0.07,
  markchev/.style={
    postaction={
      decorate,
      decoration={
        markings,
        mark=at position #1 with {
          \draw[edge] (-\chevW,-\chevH) -- (0,0) -- (-\chevW,\chevH);
        }
      }
    }
  },
  markchev/.default=0.5
]
  \coordinate (C) at (0,0);
  \def\len{1.6}

  \foreach \ang/\i in {150/1,170/2,-170/3,-150/4} {
    \path (C) -- ++(\ang:\len) coordinate (BL\i);
    \draw[edge] (C) -- (BL\i);
    \path[markchev] (BL\i) -- (C);
    \node[vertex] at (BL\i) {};
  }
  \foreach \ang/\i in {10/1,30/2,-30/3,-10/4} {
    \path (C) -- ++(\ang:\len) coordinate (BR\i);
    \draw[edge] (C) -- (BR\i);
    \path[markchev] (C) -- (BR\i);
    \node[vertex] at (BR\i) {};
  }

  \foreach \h/\w in {3/2.75, 2.5/2.0, 2/1.25} {
    \draw[edge, markchev]
      (C) .. controls +(-\w,\h) and +(\w,\h) .. (C);
  }

  \node[vertex] at (C) {};
\end{tikzpicture}
\end{center}
with $m$ incoming vertices, $n$ outgoing vertices, and $g$ loops induces the surface of genus $g$ with $m$ incoming and $n$ outgoing boundary components.

It therefore suffices to show that every connected quiver is homotopic to a unique octopus. 
Let $\Gamma$ be a connected quiver. 
If $|\Gamma\sint|>1$, then there exists at least one edge not adjacent to $\partial\Gamma$. 
By applying the move \eqref{aq07x7n5} that deletes this edge, we obtain a homotopic quiver with one fewer interior vertex. 
Iterating this process yields a quiver with a single interior vertex, i.e.\ an octopus. 
Uniqueness follows because $g(\Gamma)$, $m(\Gamma)$, and $n(\Gamma)$ are homotopy invariants, and these three quantities completely determine the corresponding octopus by fixing the number of loops and the number of incoming and outgoing legs.
\end{proof}

Our goal is to show that the Lax--Kirchhoff moduli spaces depend only on the homotopy class of a quiver, up to isomorphisms of Hamiltonian spaces. 
In particular, this will allow us to associate to each oriented surface with boundary $\Sigma$ a well-defined Hamiltonian space $\M(\Sigma)$, obtained from any quiver $\Gamma$ whose topological thickening is $\Sigma$.

To establish this, we first describe how the operation of gluing quivers behaves at the level of their moduli spaces. 
This will later imply that gluing cobordisms corresponds to Hamiltonian reduction. 
If $\Gamma_1$ and $\Gamma_2$ are quivers such that $\partial\Gamma_1^+ = \partial\Gamma_2^-$, we define $\Gamma_1 \star \Gamma_2$ to be the quiver obtained by gluing $\partial\Gamma_1^+$ to $\partial\Gamma_2^-$, as illustrated in \eqref{7w8c1eku}.

\begin{Thm}[Gluing]\label{m8uy90np}
Let $\Gamma_1$ and $\Gamma_2$ be connected quivers with $B \coloneqq \partial\Gamma_1^+ = \partial\Gamma_2^- \ne \emptyset$. 
Consider the diagonal Hamiltonian action of $G^B$ on $\M(\Gamma_1) \times \M(\Gamma_2)$. 
Then there is a canonical isomorphism
\[
\M(\Gamma_1 \star \Gamma_2) \cong (\M(\Gamma_1) \times \M(\Gamma_2)) \sll{} G^B
\]
of Hamiltonian $G^{\partial\Gamma_1^-} \times G^{\partial\Gamma_2^+}$-spaces.
\end{Thm}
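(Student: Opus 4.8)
The plan is to use the identification $\M(\Gamma) \cong T^*G^E \sll{} G^{\Gamma_{\mathrm{int}}}$ from Theorem \ref{tis5zvw5} to reduce the gluing statement to a purely finite-dimensional fact about iterated symplectic reduction, namely ``reduction in stages''. First I would record the combinatorial structure of $\Gamma \coloneqq \Gamma_1 \star \Gamma_2$: its edge set is $E = E_1 \sqcup E_2$, and its interior vertex set is $(\Gamma_1)_{\mathrm{int}} \sqcup (\Gamma_2)_{\mathrm{int}} \sqcup B$, since every vertex in $B$ acquires degree $2$ after gluing (one incoming edge from $\Gamma_1$, one outgoing edge into $\Gamma_2$) and hence becomes interior, while $\partial\Gamma^- = \partial\Gamma_1^-$ and $\partial\Gamma^+ = \partial\Gamma_2^+$. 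Consequently the dimension count of Theorem \ref{pa1025dm} already matches: $\dim \M(\Gamma) = 2(|E| - |(\Gamma_1)_{\mathrm{int}}| - |(\Gamma_2)_{\mathrm{int}}| - |B|)\dim G = \dim\M(\Gamma_1) + \dim\M(\Gamma_2) - 2|B|\dim G$, which is the dimension of the right-hand side.

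Next I would observe that $T^*G^{E} = T^*G^{E_1} \times T^*G^{E_2}$, and that the group $G^{\Gamma_{\mathrm{int}}} = G^{(\Gamma_1)_{\mathrm{int}}} \times G^{(\Gamma_2)_{\mathrm{int}}} \times G^{B}$ acts via the embedding $G^V \to (G\times G)^E$ of the theorem. The key point is that the $G^{(\Gamma_1)_{\mathrm{int}}}$-factor acts only on the $T^*G^{E_1}$-factor, the $G^{(\Gamma_2)_{\mathrm{int}}}$-factor only on $T^*G^{E_2}$, and the $G^{B}$-factor acts on both: on $T^*G^{E_1}$ it acts through the ``target'' slots (since vertices of $B$ are targets of edges of $\Gamma_1$), i.e.\ exactly as the residual $G^{\partial\Gamma_1^+}$-action used to define the Hamiltonian structure on $\M(\Gamma_1)$, and on $T^*G^{E_2}$ through the ``source'' slots, i.e.\ as the residual $G^{\partial\Gamma_2^-}$-action on $\M(\Gamma_2)$. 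Therefore reducing $T^*G^{E}$ by $G^{\Gamma_{\mathrm{int}}}$ can be done in stages: first reduce $T^*G^{E_1} \times T^*G^{E_2}$ by $G^{(\Gamma_1)_{\mathrm{int}}} \times G^{(\Gamma_2)_{\mathrm{int}}}$, which by Theorem \ref{tis5zvw5} yields $\M(\Gamma_1) \times \M(\Gamma_2)$ together with its residual Hamiltonian $G^B \times G^{\partial\Gamma_1^-} \times G^{\partial\Gamma_2^+}$-action (here the $G^B$-action is the diagonal one, since each factor of $B$ enters through a target in $\Gamma_1$ and a source in $\Gamma_2$); then reduce the result by $G^B$, which produces $(\M(\Gamma_1)\times\M(\Gamma_2))\sll{} G^B$. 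On the other hand reducing directly by the full group gives $\M(\Gamma_1\star\Gamma_2)$. Reduction in stages, applied to free and proper actions (freeness on the relevant zero level sets follows from the connectedness arguments already used, e.g.\ in Proposition \ref{xo75edv6}), identifies the two, and tracking the residual moment maps gives the claimed isomorphism of Hamiltonian $G^{\partial\Gamma_1^-}\times G^{\partial\Gamma_2^+}$-spaces.

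The one technical point requiring care, and the main potential obstacle, is verifying that under the symplectomorphism $\varphi$ of Theorem \ref{tis5zvw5} the residual $G^B$-action on $\M(\Gamma_i)$ really matches the $G^B$-factor of the $G^{\Gamma_{\mathrm{int}}(\Gamma)}$-action on $T^*G^{E}$, including a sign/orientation bookkeeping: for $\Gamma_1$ the vertices of $B$ are outgoing boundary (so the moment map contributes $+\Ad_{a_e}x_e$) and for $\Gamma_2$ they are incoming boundary (contributing $-x_e$), and one must check that the zero level set $\nu^{-1}(0)$ for the glued quiver at a vertex $v\in B$ — which reads $\Ad_{g_e}x_e = x_{e'}$ where $e$ is the $\Gamma_1$-edge into $v$ and $e'$ the $\Gamma_2$-edge out of $v$ — is precisely the fiber product condition $\lambda_{\Gamma_1}(v) = \lambda_{\Gamma_2}(v)$ defining $(\M(\Gamma_1)\times\M(\Gamma_2))\sll{}G^B$. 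This is a direct comparison of the explicit moment maps $\nu$ and $\lambda$ recorded in Section \ref{rj2bih75}, so it is routine but must be written out carefully. Finally I would note that the whole argument can alternatively be phrased without $\varphi$, by constructing the isomorphism directly on the level of Lax--Kirchhoff fields: a solution on $\Gamma_1\star\Gamma_2$ restricts to solutions on $\Gamma_1$ and $\Gamma_2$ whose boundary values on $B$ satisfy the Kirchhoff matching $A_1^{e}(1) = A_1^{e'}(0)$, which is exactly the moment-map matching condition cutting out $(\M(\Gamma_1)\times\M(\Gamma_2))\sll{}G^B$; conversely such a matched pair glues, uniquely up to the extra $G^B$ gauge freedom at the glued vertices. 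Either route works; I would present the stages argument as the main proof and mention the second as a remark.
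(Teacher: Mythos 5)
Your proposal is correct and follows exactly the paper's route: the paper's proof is the one-line observation that the claim ``follows directly from Theorem \ref{tis5zvw5} together with reduction in stages,'' which is precisely the argument you spell out (with the combinatorial identification $E = E_1 \sqcup E_2$, $\Gamma_{\mathrm{int}} = (\Gamma_1)_{\mathrm{int}} \sqcup (\Gamma_2)_{\mathrm{int}} \sqcup B$, and the moment-map bookkeeping). Your write-up simply supplies the details the paper leaves implicit.
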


\begin{proof}
This follows directly from Theorem \ref{tis5zvw5} together with reduction in stages.
\end{proof}

\begin{Thm}[Homotopy invariance]\label{78uo1ham}
Let $\Gamma_1$ and $\Gamma_2$ be two homotopic connected quivers with non-empty boundary $\partial\Gamma \coloneqq \partial\Gamma_1 = \partial\Gamma_2$. 
Then any homotopy between $\Gamma_1$ and $\Gamma_2$ induces a canonical isomorphism $\M(\Gamma_1) \cong \M(\Gamma_2)$ of Hamiltonian $G^{\partial\Gamma}$-spaces.
\end{Thm}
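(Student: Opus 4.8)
The plan is to reduce everything to the single elementary homotopy move \eqref{aq07x7n5}, since a general homotopy is a finite composition of such moves (and their inverses). So it suffices to produce, for each instance of \eqref{aq07x7n5} relating $\Gamma$ and $\Gamma'$, a canonical isomorphism $\M(\Gamma) \cong \M(\Gamma')$ of Hamiltonian $G^{\partial\Gamma}$-spaces, and then compose. Since the move only touches interior vertices and an edge between interior vertices, the boundary set $\partial\Gamma = \partial\Gamma'$ is unchanged, so the statement about $G^{\partial\Gamma}$-equivariance makes sense. Note that the displayed formulas $m(\Gamma), n(\Gamma), g(\Gamma)$ being homotopy invariants guarantees $|E| - |\Gamma\sint|$ is preserved, consistent with the dimension formula in Theorem~\ref{pa1025dm}.

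The cleanest route is to use the model from Theorem~\ref{tis5zvw5}: $\M(\Gamma) \cong T^*G^E \sll{} G^{\Gamma\sint}$, equivariantly for $G^{\partial\Gamma}$. So I would prove the corresponding statement purely on the finite-dimensional side. Consider the move from right to left in \eqref{aq07x7n5}: $\Gamma'$ has one fewer edge and one fewer interior vertex than $\Gamma$; concretely, $\Gamma$ has an interior vertex $v_0$ split into two interior vertices $v_1, v_2$ joined by a new edge $e_0$ with $s(e_0) = v_1$, $t(e_0) = v_2$ (the orientation being immaterial). Write $E = E' \sqcup \{e_0\}$ and $\Gamma\sint = (\Gamma'\sint \setminus \{v_0\}) \sqcup \{v_1, v_2\}$. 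The key computation is that cutting $T^*G$ out of the $e_0$-factor via the moment map for the $G \times G$-action corresponding to the two endpoints $v_1, v_2$, and then reducing, identifies the $v_1, v_2$ pair of $G$-actions on the rest with the single diagonal $v_0$-action. Precisely: $T^*G \sll{} (G \times G)$ at level $(0,0)$ for the left-right action is a point, but that is not quite what we want; instead we want to reduce $T^*G^{E'} \times T^*G$ by the $G^{\{v_1,v_2\}}$ sitting in $(G\times G)^{E'} \times (G \times G)$, where on the $T^*G$-factor $v_1$ acts on the right and $v_2$ on the left. By reduction in stages, first reduce $T^*G$ (the $e_0$-factor) by $G$ acting on the right by $v_1$: at level $0$, $T^*G \sll{} G \cong \mathfrak{g}$ with a residual left $G$-action identified with the coadjoint action — but the moment-map constraint for $v_1$ couples it to the $E'$-edges. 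The honest statement is the shifting/composition identity already used in Theorem~\ref{m8uy90np}: $T^*G$ with its $G \times G$ action is the identity 1-morphism in $\mathbf{Ham}$ from $G$ to $G$, so reducing $M \times T^*G$ by the $G \times G$ that acts on $T^*G$ (and on $M$ via two given commuting $G$-actions) recovers $M$ with those two $G$-actions fused into the single diagonal one. Applying this with $M = T^*G^{E'}$ and the two $G$-actions being the $v_1$- and $v_2$-components of the $G^V$-action yields $T^*G^E \sll{} G^{\{v_1,v_2\}} \cong T^*G^{E'} \sll{} (\text{diagonal } G_{v_0})$, equivariantly and symplectically for all remaining $G$-factors. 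Reducing further by $G^{\Gamma'\sint \setminus \{v_0\}}$ on both sides and invoking reduction in stages gives $\M(\Gamma) \cong \M(\Gamma')$ as Hamiltonian $G^{\partial\Gamma}$-spaces.

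The main obstacle I expect is bookkeeping: making the "$T^*G$ is the identity bimodule" step fully rigorous in the presence of all the other edges and vertices, i.e.\ checking that the moment-map levels line up (everything is reduced at $0$), that freeness of the relevant actions is preserved (which follows from the induction argument used for Proposition~\ref{xo75edv6} and the freeness statement before Theorem~\ref{tis5zvw5}, applied to $\Gamma'$ which is still connected with the same non-empty boundary), and that the resulting isomorphism is canonical — independent of auxiliary choices like the path $\gamma_a$ in Theorem~\ref{tis5zvw5} — so that composing the isomorphisms attached to a sequence of moves is well-defined. A secondary point to address is that a homotopy may route through moves in either direction and may temporarily change the orientation of interior edges; since the isomorphism $\M(\Gamma) \cong T^*G^E \sll{} G^{\Gamma\sint}$ in Theorem~\ref{tis5zvw5} does not depend on the orientation of an edge not adjacent to the boundary in any essential way (reversing $e_0$ swaps $s(e_0) \leftrightarrow t(e_0)$, i.e.\ swaps $v_1 \leftrightarrow v_2$, and the reduction is symmetric in the two), this causes no difficulty, but it should be noted explicitly. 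Finally, I would remark that since $g(\Gamma), m(\Gamma), n(\Gamma)$ determine the octopus (Proposition~\ref{pahn7k6y}) and the octopus has a single interior vertex, one could alternatively prove the theorem by showing both $\Gamma_1$ and $\Gamma_2$ are canonically isomorphic on moduli to the moduli of their common octopus — but the step-by-step argument above is more transparent and makes the canonicity of the induced isomorphism manifest.
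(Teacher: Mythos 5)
Your proposal is correct and follows essentially the same route as the paper: reduce to a single elementary move \eqref{aq07x7n5}, pass to the finite-dimensional model $T^*G^E \sll{} G^{\Gamma\sint}$ via Theorem~\ref{tis5zvw5}, and apply the identity $(M \times T^*G)\sll{} G \cong M$ together with reduction in stages. The bookkeeping concerns you flag (freeness, level $0$, orientation of interior edges) are legitimate but are exactly the points the paper's proof also leaves to the cited identity \eqref{rvfxrbk7} and to Theorem~\ref{tis5zvw5}.
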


\begin{proof}
Recall that for a Hamiltonian $G \times H$-manifold $M$, there is a canonical isomorphism of Hamiltonian $G \times H$-manifolds
\begin{equation}\label{rvfxrbk7}
(M \times T^*G) \sll{} G \cong M;
\end{equation}
see, for instance, \cite[Theorem 8.18]{MeinrenkenSymplGeom2024}, \cite{MooreTachikawa}, or \cite[Theorem 4.11]{CrooksMayrand2022SymplecticReduction}. 
The isomorphism $\M(\Gamma_1) \cong \M(\Gamma_2)$ is an immediate consequence of this fact combined with Theorem \ref{tis5zvw5}.

More concretely, let $\Gamma$ be a connected quiver with non-empty boundary and $|\Gamma\sint|>1$. 
Let $\Gamma'$ be the quiver obtained from $\Gamma$ by performing the move \eqref{aq07x7n5} from left to right. 
That is, we delete an edge $e_0 \in E$ with $v_1 \coloneqq s(e_0)$ and $v_2 \coloneqq t(e_0)$ both in $\Gamma\sint$, so that 
\[
E' = E \setminus \{e_0\}, \qquad 
V' = V \setminus \{v_2\}, \qquad 
t' = t|_{E'}, \qquad 
s'(e) =
\begin{cases}
v_1 & \text{; if } s(e) = v_2\\
s(e) & \text{; otherwise.}
\end{cases}
\]
It suffices to show that $\M(\Gamma)$ and $\M(\Gamma')$ are isomorphic as Hamiltonian $G^{\partial\Gamma}$-spaces. 
By Theorem \ref{tis5zvw5} and \eqref{rvfxrbk7}, we have
\begin{align*}
\M(\Gamma) 
&\cong T^*G^E \sll{} G^{\Gamma\sint} \\
&\cong (T^*G^{E \setminus \{e_0\}} \times T^*G) \sll{} (G^{\Gamma\sint \setminus \{v_1, v_2\}} \times G^{v_1} \times G^{v_2}) \\
&\cong T^*G^{E \setminus \{e_0\}} \sll{} (G^{\Gamma\sint \setminus \{v_1, v_2\}} \times G^{v_1}) \\
&\cong \M(\Gamma').\qedhere
\end{align*}
\end{proof}

In particular, for every connected oriented two-dimensional cobordism $\Sigma$ with non-empty boundary, we have a Lax--Kirchhoff moduli space $\M(\Sigma)$, well-defined up to isomorphisms of Hamiltonian spaces. If $\Sigma$ has genus $g$ with $m$ incoming and $n$ outgoing boundary components, then
\[
\dim \M(\Sigma) = 2(g + m + n - 1)\dim G.
\]

We have so far restricted to connected quivers for simplicity, but all results hold more generally for a quiver $\Gamma$ all of whose connected components $\Gamma_1, \ldots, \Gamma_k$ have non-empty boundary. In this case,
\[
\M(\Gamma) = \M(\Gamma_1) \times \cdots \times \M(\Gamma_k)
\]
is again a finite-dimensional smooth Hamiltonian $G^{\partial\Gamma}$-space. 
It follows that for every oriented two-dimensional cobordism $\Sigma$ all of whose connected components have non-empty boundary, there is a Lax--Kirchhoff moduli space $\M(\Sigma)$.

%%%%%%%%%%%%%%%%%%%%%%%%%%%%%%%%%%%%%%%%%%%%%%%%%%%%%%%%%%%%%%%%
\section{Topological quantum field theories}\label{j8sih92i}
%%%%%%%%%%%%%%%%%%%%%%%%%%%%%%%%%%%%%%%%%%%%%%%%%%%%%%%%%%%%%%%%

Recall that two-dimensional cobordisms form a category $\mathbf{Cob}_2$ whose objects are compact one-dimensional manifolds (disjoint union of circles) and a morphism from $M$ to $N$ is an oriented surface $\Sigma$ whose boundary is $\partial\Sigma = M^- \sqcup N$, where $M^-$ is $M$ with the opposite orientation. We denote $\partial\Sigma^- \coloneqq M$ and $\partial\Sigma^+ \coloneqq N$. Two cobordisms $\Sigma_1 : \partial\Sigma_1^- \to \partial\Sigma_1^+$ and $\Sigma_2 : \partial\Sigma_2^- \to \partial\Sigma_2^+$ such that $\partial\Sigma_1^+ = \partial\Sigma_2^-$ are composed by gluing $\Sigma_1$ with $\Sigma_2$ along $\partial\Sigma_1^+ = \partial\Sigma_2^-$, resulting in a cobordism $\Sigma_2 \circ \Sigma_1 : \partial\Sigma_1^- \to \partial\Sigma_2^+$. Theorem \ref{m8uy90np} then shows that the composition of cobordisms corresponds to Hamiltonian reduction, that is
\begin{equation}\label{66b1kwd2}
\M(\Sigma_2 \circ \Sigma_1) = (\M(\Sigma_1) \times \M(\Sigma_2)) \sll{} G^n,
\end{equation}
where $n$ is the number of connected components of $\partial\Sigma_1^+ = \partial\Sigma_2^-$.

Following \cite{MooreTachikawa}, we now interpret the association $\Sigma \mapsto \M(\Sigma)$ and the gluing law \eqref{66b1kwd2} functorially, where the target is a certain category of Hamiltonian spaces.
To do so, we first define a partial category: that is, a category where only some morphisms can be composed (a paradigm example of such a partial category is Weinstein's symplectic ``category'' of Lagrangian correspondences \cite{Weinstein1982symplecticcategory,Weinstein2010Symplecticcategories}). The objects of our partial category are compact Lie groups and a morphism from $G$ to $H$ is an isomorphism class of Hamiltonian $G \times H$-spaces. Two morphisms $M : G \to H$ and $N : H \to I$ are \emph{composable} if $H$ acts freely on $M \times N$. In this case, the composition is the symplectic reduction
\[
N \circ M \coloneqq (M \times N) \sll{} H : G \too I,
\]
which is indeed a Hamiltonian $G \times I$-space. The identity morphism of a Lie group $G$ is the cotangent bundle $T^*G$ with its Hamiltonian $G \times G$-action, as follows from \eqref{rvfxrbk7}.
By the Wehrheim--Woodward construction \cite{WehrheimWoodward2010Functoriality}, such a partial category can be canonically completed to a category, denoted $\mathbf{Ham}$.
The objects of $\mathbf{Ham}$ are identical (compact Lie groups) but the morphisms are now finite sequences of the original morphisms, modulo the equivalence relation generated by composing adjacent composable pairs (see also \cite{CrooksMayrand2024MooreTachikawa,Cazassus2023twocategory}).
Then $\mathbf{Ham}$ is a symmetric monoidal category under the cartesian product of Lie groups and Hamiltonian manifolds. 
The fact that the monoidal structure is well-defined can be proved exactly as in \cite[\S3.3]{CrooksMayrand2024MooreTachikawa} (or alternatively by embedding $\mathbf{Ham}$ into the 1-shifted symplectic category $\mathbf{WS}_1$ as in \cite[\S5]{CrooksMayrand2024MooreTachikawa}). There is also a 2-category upgrade of $\mathbf{Ham}$ considered in \cite{Cazassus2023twocategory}.

Our goal is to show that the Lax--Kirchhoff moduli spaces induce a unique symmetric monoidal functor
\[
\M : \mathbf{Cob}_2 \too \mathbf{Ham},
\]
sending the circle $S^1$ to $G$ and $\Sigma_\Gamma$ to $\M(\Gamma)$ for every $\Gamma$, i.e.\ a two-dimensional topological quantum field theory (TQFT) valued in $\mathbf{Ham}$. See, for example, \cite{Kock2004Frobenius} for background on TQFTs valued in arbitrary symmetric monoidal categories.

To do so, it suffices to construct the functor on a finite number of generators of $\mathbf{Cob}_2$ subject to a finite number of relations, as listed in \cite[\S1.4]{Kock2004Frobenius}. In more detail, denoting by $\Sigma_{m, n}$ the genus-0 cobordism from $m$ circles to $n$ circles, the generators are the cup $\Sigma_{1, 0} = \tqftcup$, the cap $\Sigma_{0, 1} = \tqftcap$, the two pairs of pants $\Sigma_{2, 1} = \tqftpoptwoone$ and $\Sigma_{1, 2} = \tqftpoponetwo$, the cylinder $\Sigma_{1, 1} = \tqftcyl$ and the swap $\tqftswap$. Among this list, only the cup $\tqftcup$ and the cap $\tqftcap$ have not been associated to a morphism in $\mathbf{Ham}$, as they are not induced by any quiver. We extend the definition by setting $\M(\tqftcup)$ and $\M(\tqftcap)$ to be a singleton $\{*\}$ equipped with the trivial $G$-action. As we will see below, this definition is forced by functoriality. The relations that we need to verify so that this generates a symmetric monoidal functor are all consequences of homotopy invariance (Theorem \ref{78uo1ham}) and gluing (Theorem \ref{m8uy90np}) except for those involving the cup and cap, since the latter are not induced by quivers.
That is, it remains to verify that capping off a boundary component, i.e.\ removing an outgoing leg from the quiver, corresponds to composition with $\M(\tqftcup)$ in $\mathbf{Ham}$, and similarly for an incoming leg.
Since composing $\M(\Sigma_\Gamma)$ with $\M(\tqftcup) = \{*\}$ amounts to symplectic reduction by the $G$-action of the corresponding boundary component, the required compatibility is the content of the next proposition.

\begin{Prop}\label{bij94btr}
Let $\Gamma$ be a connected quiver with non-empty boundary and let $v_0 \in \partial\Gamma$. 
Let $G^{v_0}$ denote the copy of $G$ in $G^{\partial\Gamma}$ associated with $v_0$, and consider the induced Hamiltonian action of $G^{v_0}$ on $\M(\Gamma)$. 
Then
\[
\M(\Gamma) \sll{} G^{v_0} \cong \M(\Gamma \setminus \{v_0\}),
\]
where $\Gamma \setminus \{v_0\}$ is obtained from $\Gamma$ by deleting $v_0$ and the unique edge adjacent to $v_0$.
\end{Prop}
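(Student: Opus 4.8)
The plan is to reduce the statement to the combinatorial description of $\M(\Gamma)$ provided by Theorem \ref{tis5zvw5} together with the cancellation identity \eqref{rvfxrbk7}, exactly in the style of the proof of Theorem \ref{78uo1ham}. Let $e_0$ be the unique edge adjacent to $v_0$. Since $v_0 \in \partial\Gamma$, exactly one other vertex $w$ is adjacent to $e_0$; depending on whether $v_0 \in \partial\Gamma^-$ or $\partial\Gamma^+$ we have $s(e_0) = v_0$, $t(e_0) = w$ or vice versa. First I would note that $\Gamma \setminus \{v_0\}$ has edge set $E \setminus \{e_0\}$ and vertex set $V \setminus \{v_0\}$, and that its set of interior vertices is $(\Gamma_{\mathrm{int}}) \cup \{w\}$ if $w$ had degree $2$ in $\Gamma$ (so becomes interior-or-boundary of degree $1$ after deletion — here one must be a little careful), or $\Gamma_{\mathrm{int}}$ unchanged if $\deg_\Gamma(w) > 2$. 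In all cases the key point is that $w$ is \emph{not} in $\Gamma_{\mathrm{int}}$ but may or may not land in the interior of $\Gamma\setminus\{v_0\}$; either way the reduction count works out, which is the bookkeeping I would make precise.

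The main computation: by Theorem \ref{tis5zvw5}, $\M(\Gamma) \cong T^*G^E \sll{} G^{\Gamma_{\mathrm{int}}}$ as a Hamiltonian $G^{\partial\Gamma}$-space, and the $G^{v_0}$-action is the one coming from the $T^*G^{e_0}$ factor via left or right translation (depending on $\operatorname{sgn}(v_0)$). Reducing in stages, I would write
\[
\M(\Gamma) \sll{} G^{v_0} \cong \bigl(T^*G^{E \setminus \{e_0\}} \times T^*G^{e_0}\bigr) \sll{} \bigl(G^{\Gamma_{\mathrm{int}}} \times G^{v_0}\bigr).
\]
The vertex $v_0$ acts only on the $T^*G^{e_0}$ factor (since $e_0$ is $v_0$'s only edge), and the other endpoint $w$ of $e_0$ acts on $T^*G^{e_0}$ through the $G^{\Gamma_{\mathrm{int}}}$- or $G^{\partial\Gamma}$-action. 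Performing the $G^{v_0}$-reduction on $T^*G^{e_0}$ alone (by the left or right translation action of $G$ on $T^*G$, which is free and transitive on the base) collapses $T^*G^{e_0} \sll{} G^{v_0}$ to a point, with residual $G^{w}$-action trivial — equivalently, one uses \eqref{rvfxrbk7} in the degenerate form $T^*G \sll{} G \cong \{*\}$ (this is the $M = \{*\}$ case). Hence the reduction simplifies to $T^*G^{E \setminus \{e_0\}} \sll{} G^{\Gamma_{\mathrm{int}}}$, and if $w$ becomes an interior vertex of $\Gamma \setminus \{v_0\}$ this is already $\M(\Gamma \setminus \{v_0\})$; otherwise $w$ acquires degree $1$ and sits in $\partial(\Gamma\setminus\{v_0\})$, so $\Gamma_{\mathrm{int}} = (\Gamma\setminus\{v_0\})_{\mathrm{int}}$ and again we get $\M(\Gamma\setminus\{v_0\})$ by Theorem \ref{tis5zvw5}. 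One should also check the residual $G^{\partial\Gamma \setminus \{v_0\}} = G^{\partial(\Gamma\setminus\{v_0\})}$-equivariance, which is immediate since all these reductions are by commuting group actions.

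The main obstacle I anticipate is \emph{not} the symplectic geometry — reduction in stages and \eqref{rvfxrbk7} do all the work — but rather the careful treatment of the combinatorial edge cases: specifically, what happens to the other endpoint $w$ of $e_0$. If $\deg_\Gamma(w) = 2$, then after deleting $v_0$ and $e_0$ the vertex $w$ has degree $1$ and moves into $\partial\Gamma$; if $w$ had an incoming and an outgoing edge this is fine, but if (say) both remaining half-edges at $w$ point the same way, $w$ would have degree $1$ with the wrong in/out profile — however this cannot happen because in $\Gamma$ the vertex $w$ had degree $2$ with $e_0$ as one of its edges and the definition of $\partial\Gamma^{\pm}$ via $\deg_{\mathrm{in}}, \deg_{\mathrm{out}}$ still applies. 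I would handle this by splitting into the cases $\deg_\Gamma(w) > 2$ and $\deg_\Gamma(w) = 2$, and in the latter case further observing that $\Gamma \setminus \{v_0\}$ is still a legitimate quiver with non-empty boundary (or, in the extreme case $\Gamma$ is a single edge, noting $\M(\Gamma)\sll{}G^{v_0} \cong T^*G \sll{} G \cong \{*\} \cong \M(\Gamma\setminus\{v_0\})$ where the right side is the empty/point convention used for the cup and cap). Writing these cases out cleanly, and confirming that the dimension formula $\dim\M(\Gamma\setminus\{v_0\}) = \dim\M(\Gamma) - 2\dim G$ matches, is the only real labor.
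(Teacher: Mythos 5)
Your main computation coincides with the paper's proof: identify $\M(\Gamma)$ with $\bigl(T^*G^{E\setminus\{e_0\}}\times T^*G^{e_0}\bigr)\sll{} G^{\Gamma\sint}$ via Theorem \ref{tis5zvw5}, reduce in stages, and use $T^*G^{e_0}\sll{} G^{v_0}\cong\{*\}$ (with trivial residual action) to delete that factor. That part is fine.

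The gap is in the edge case you yourself single out as ``the only real labor.'' Let $w$ be the other endpoint of $e_0$. If $\deg_\Gamma(w)=2$, then after deleting $e_0$ the vertex $w$ has degree $1$, so $(\Gamma\setminus\{v_0\})\sint=\Gamma\sint\setminus\{w\}$ --- not $\Gamma\sint\cup\{w\}$ as you first write, and not $\Gamma\sint$ as you later assert. Hence what your reduction produces, $T^*G^{E\setminus\{e_0\}}\sll{} G^{\Gamma\sint}$, is $\M(\Gamma\setminus\{v_0\})\sll{} G^{w}$, which carries one extra reduction compared with $\M(\Gamma\setminus\{v_0\})=T^*G^{E\setminus\{e_0\}}\sll{} G^{(\Gamma\setminus\{v_0\})\sint}$. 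This is not a removable bookkeeping issue: the statement actually fails there. For the two-edge path $v_1\to w\to v_0$ one has $\M(\Gamma)\cong T^*G$ and $\M(\Gamma)\sll{} G^{v_0}\cong T^*G\sll{} G\cong\{*\}$, while $\Gamma\setminus\{v_0\}$ is a single edge with $\M(\Gamma\setminus\{v_0\})\cong T^*G$. (Topologically, deleting a leg whose interior endpoint has degree $2$ creates a new boundary circle at $w$ instead of capping off the one at $v_0$.) The argument, and the proposition, are correct exactly when $\deg_\Gamma(w)\ge 3$, which is the only case needed for the cup/cap relations of the TQFT (there one caps a leg of a pair-of-pants quiver, whose interior vertex has degree $3$); otherwise one should first apply a homotopy move to raise $\deg(w)$. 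So you should add this hypothesis or restriction rather than claiming the degree-$2$ case ``again gives $\M(\Gamma\setminus\{v_0\})$.'' For what it is worth, the paper's own proof makes the same silent identification $G^{\Gamma\sint}=G^{(\Gamma\setminus\{v_0\})\sint}$ in its final line, so this caveat applies to it as well.
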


\begin{proof}
This follows again from Theorem \ref{tis5zvw5}. 
Suppose that $v_0 \in \partial\Gamma^+$ (the case $\partial\Gamma^- \ne\emptyset$ is similar), and let $e_0$ be the unique edge such that $t(e_0) = v_0$.
Then
\begin{align*}
\M(\Gamma) 
&= T^*G^E \sll{} G^{\Gamma\sint} \\
&= (T^*G^{E \setminus \{e_0\}} \times T^*G^{e_0}) \sll{} G^{\Gamma\sint},
\end{align*}
where the Hamiltonian $G^{v_0}$-action is induced by the left action on $T^*G^{e_0}$. 
Since the Hamiltonian reduction of $T^*G$ by the left action of $G$ is trivial, we obtain
\[
\M(\Gamma) \sll{} G^{v_0} 
= (T^*G^{E \setminus \{e_0\}} \times \{*\}) \sll{} G^{\Gamma\sint} 
= \M(\Gamma \setminus \{v_0\}).\qedhere
\]
\end{proof}

We therefore have (by e.g.\ \cite[Theorem 3.6.19]{Kock2004Frobenius}) a symmetric monoidal functor $\mathbf{Cob}_2 \to \mathbf{Ham}$, i.e.\ a two-dimensional topological quantum field theory valued in $\mathbf{Ham}$. 

We note that only the closed oriented surfaces without boundary are sent to non-trivial sequences of Hamiltonian spaces in the completed category $\mathbf{Ham}$, while all the other ones are smooth (length-$1$) morphisms. These abstract morphisms can nevertheless be represented by stratified symplectic spaces \cite{SjamaarLerman1991Stratified} after performing the singular reduction.

Finally, we remark that the TQFT is uniquely determined by the quiver thickenings $\Sigma_\Gamma$ since this forces $\M(\tqftcap) = \M(\tqftcup) = \{*\}$. Indeed, if $\Gamma$ is any connected quiver with $v_0 \in \partial\Gamma^+$, we must have that $(\M(\Gamma) \times \M(\tqftcup)) \sll{} G \cong \M(\Gamma \setminus \{v_0\})$. By a dimension count, this forces $\dim \M(\tqftcup) = 0$ and connectedness follows from that of $\M(\Gamma)$ and $\M(\Gamma \setminus \{v_0\})$. We have therefore reached the final conclusion of this paper.

\begin{Thm}[TQFT valued in Hamiltonian spaces]
Let $G$ be a compact connected Lie group. There is a unique two-dimensional topological quantum field theory
\[
\mathbf{Cob}_2 \too \mathbf{Ham}
\]
sending the circle to $G$ and the thickening $\Sigma_\Gamma$ of a connected quiver $\Gamma$ with non-empty boundary to the Lax--Kirchhoff moduli space $\M(\Gamma)$.\qed
\end{Thm}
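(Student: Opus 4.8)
The plan is to reduce to the generators-and-relations presentation of $\mathbf{Cob}_2$ as a symmetric monoidal category \cite[\S1.4]{Kock2004Frobenius}: it is generated by the object $S^1$ together with the six elementary cobordisms $\tqftcup$, $\tqftcap$, $\tqftpoponetwo$, $\tqftpoptwoone$, $\tqftcyl$, $\tqftswap$, subject to the relations making $S^1$ into a commutative Frobenius algebra object. Thus a symmetric monoidal functor $\M : \mathbf{Cob}_2 \to \mathbf{Ham}$ amounts to a choice of morphism of $\mathbf{Ham}$ for each generator satisfying those relations, and the existence part of the theorem becomes: make the assignments, then check the relations. Uniqueness will be handled separately.

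First I would fix the assignments. Set $\M(S^1) = G$. Each of the cylinder, the two pairs of pants, and the swap is the thickening $\Sigma_\Gamma$ of an explicit small quiver, so its image is forced to be the corresponding Lax--Kirchhoff moduli space: by Theorem \ref{tis5zvw5}, the cylinder maps to $\M(\seg) \cong T^*G$ (which is the identity by \eqref{rvfxrbk7}), the swap to $\M$ of two disjoint edges with the factors exchanged, and the two pairs of pants to $\M$ of a trivalent quiver with the two relevant vertex-orientation conventions, i.e.\ to $T^*G^3 \sll{} G$. The cup and cap are not thickenings of any quiver, so I set $\M(\tqftcup) = \M(\tqftcap) = \{*\}$ with trivial $G$-action. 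Now each Frobenius relation is an equality of two composites of generators; under $\M$ both sides become iterated symplectic reductions, and by the gluing formula \eqref{66b1kwd2} together with reduction in stages, the topological identity translates into an isomorphism of Hamiltonian spaces. For the relations not involving the cup or cap, both sides are $\M$ of a quiver thickening presenting the same genus-$0$ surface, hence equal by homotopy invariance (Theorem \ref{78uo1ham}). The relations that do involve the cup or cap (the unit, the counit, and the two ``Frobenius'' squares) say precisely that capping a boundary circle is composition with $\{*\}$, i.e.\ symplectic reduction by the corresponding $G$-factor of $G^{\partial\Gamma}$; this is the content of Proposition \ref{bij94btr}, and the fully degenerate cases, such as $(T^*G \times \{*\}) \sll{} G \cong \{*\}$, follow from \eqref{rvfxrbk7}. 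The monoidal structure on $\mathbf{Ham}$ is the cartesian product, well defined and symmetric by the argument of \cite[\S3.3]{CrooksMayrand2024MooreTachikawa}, and $\M$ is visibly symmetric monoidal since $\M(\Gamma_1 \sqcup \Gamma_2) = \M(\Gamma_1) \times \M(\Gamma_2)$. Applying \cite[Theorem 3.6.19]{Kock2004Frobenius} then yields the functor.

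For uniqueness, suppose $\M'$ is another such functor. It agrees with $\M$ on $S^1$ and on every quiver thickening, hence on the cylinder, the pairs of pants, and the swap. For the cap, pick any connected quiver $\Gamma$ with a vertex $v_0 \in \partial\Gamma^+$; functoriality and the gluing law force $(\M(\Gamma) \times \M'(\tqftcap)) \sll{} G^{v_0} \cong \M(\Gamma \setminus \{v_0\})$, and comparing dimensions using Theorem \ref{pa1025dm} gives $\dim \M'(\tqftcap) = 0$, while connectedness of both $\M(\Gamma)$ and $\M(\Gamma \setminus \{v_0\})$ forces $\M'(\tqftcap)$ to be connected, hence the one-point space with trivial $G$-action; the cup is handled symmetrically. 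Thus $\M' = \M$.

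I expect the only real obstacle to be bookkeeping: marshalling the explicit list of Frobenius relations from \cite{Kock2004Frobenius} and matching each one to one of Theorem \ref{m8uy90np}, Theorem \ref{78uo1ham}, or Proposition \ref{bij94btr}, while keeping careful track of the boundary orientations (the $\operatorname{sgn}(v)$ convention) so that the moment maps agree on the nose, and being careful that composing length-$1$ morphisms of $\mathbf{Ham}$ that fail to be composable in the underlying partial category is interpreted in the Wehrheim--Woodward completion as a longer sequence rather than as a single reduction. No geometric input beyond Sections \ref{rj2bih75}--\ref{j8sih92i} is required.
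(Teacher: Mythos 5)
Your proposal follows essentially the same route as the paper: reduce to the generators and relations of $\mathbf{Cob}_2$ from \cite[\S1.4]{Kock2004Frobenius}, assign the quiver-induced moduli spaces to the cylinder, pairs of pants, and swap, set $\M(\tqftcup) = \M(\tqftcap) = \{*\}$, verify the relations via Theorems \ref{m8uy90np} and \ref{78uo1ham} together with Proposition \ref{bij94btr} for the cup/cap relations, and obtain uniqueness by the same dimension-count-plus-connectedness argument. The proof is correct and matches the paper's.
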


\bibliographystyle{plain}
\bibliography{lax-kirchhoff-moduli.bib}

\end{document}